\newtheorem{theorem}{Theorem}[subsection]
\newtheorem{definition}[theorem]{Definition}
\newtheorem{definition-lemma}[theorem]{Definition/Lemma}
\newtheorem{definition-explanation}[theorem]{Definition/Explanation}
\newtheorem{explanation}[theorem]{Explanation}
\newtheorem{explanation-definition}[theorem]{Explanation/Definition}
\newtheorem{definition-fact}[theorem]{Definition/Fact}
\newtheorem{definition-notation}[theorem]{Definition/Notation}
\newtheorem{definition-conjecture}[theorem]{Definition/Conjecture}
\newtheorem{lemma}[theorem]{Lemma}
\newtheorem{lemma-definition}[theorem]{Lemma/Definition}
\newtheorem{proposition}[theorem]{Proposition}
\newtheorem{corollary}[theorem]{Corollary}
\newtheorem{remark}[theorem]{\it Remark}
\newtheorem{remark-notation}[theorem]{\it Remark/Notation}
\newtheorem{application-lemma}[theorem]{Application/Lemma}
\newtheorem{assumption}[theorem]{\it Assumption}
\newtheorem{example}[theorem]{Example}
\newtheorem{example-definition}[theorem]{Example/Definition}
\newtheorem{definition-prototype}[theorem]{Definition-Prototype}
\newtheorem{question}[theorem]{Question}
\numberwithin{equation}{subsection}
\newtheorem{stheorem}{Theorem}[section]
\newtheorem{sdefinition}[stheorem]{Definition}
\newtheorem{sdefinition-lemma}[stheorem]{Definition/Lemma}
\newtheorem{sdefinition-explanation}[stheorem]{Definition/Explanation}
\newtheorem{sexplanation-definition}[stheorem]{Explanation/Definition}
\newtheorem{sdefinition-fact}[stheorem]{Definition/Fact}
\newtheorem{sdefinition-notation}[stheorem]{Definition/Notation}
\newtheorem{sdefinition-conjecture}[stheorem]{Definition/Conjecture}
\newtheorem{slemma}[stheorem]{Lemma}
\newtheorem{slemma-definition}[stheorem]{Lemma/Definition}
\newtheorem{sproposition}[stheorem]{Proposition}
\newtheorem{scorollary}[stheorem]{Corollary}
\newtheorem{sremark-notation}[stheorem]{\it Remark/Notation}
\newtheorem{sapplication-lemma}[stheorem]{Application/Lemma}
\newtheorem{sexample}[stheorem]{Example}
\newtheorem{sexample-definition}[stheorem]{Example/Definition}
\newtheorem{sdefinition-prototype}[stheorem]{Definition-Prototype}
\newtheorem{squestion}[stheorem]{Question}
\newtheorem{ssdefinition-lemma}[sstheorem]{Definition/Lemma}
\newtheorem{ssdefinition-explanation}[sstheorem]{Definition/Explanation}
\newtheorem{ssexplanation-definition}[sstheorem]{Explanation/Definition}
\newtheorem{ssdefinition-fact}[sstheorem]{Definition/Fact}
\newtheorem{ssdefinition-notation}[sstheorem]{Definition/Notation}
\newtheorem{ssdefinition-conjecture}[sstheorem]{Definition/Conjecture}
\newtheorem{sslemma-definition}[sstheorem]{Lemma/Definition}
\newtheorem{ssremark-notation}[sstheorem]{\it Remark/Notation}
\newtheorem{ssapplication-lemma}[sstheorem]{Application/Lemma}
\newtheorem{ssexample-definition}[sstheorem]{Example/Definition}
\newtheorem{ssdefinition-prototype}[sstheorem]{Definition-Prototype}
 \newcommand{\dashAlgscriptsize}{{\mbox{\scriptsize\it -Alg}\,}}
\newcommand{\Ann}{{\mbox{\it Ann}\,}}
\newcommand{\Az}{{\mbox{\it Az}}}
 \newcommand{\Azscriptsize}{{\mbox{\scriptsize\it A$\!$z}}}
 \newcommand{\Aztiny}{{\mbox{\tiny\it A$\!$z}}}
\newcommand{\Cayleyscriptsize}{{\mbox{\it\scriptsize Cayley}\,}}
\newcommand{\Center}{{\mbox{\it Center}\,}}
 \newcommand{\scriptsizeCenter}{{\mbox{\scriptsize\it Center}\,}}
\newcommand{\Centralizer}{{\mbox{\it Centralizer}\,}}
\newcommand{\End}{\mbox{\it End}\,}
\newcommand{\Endsheaf}{{\mbox{\it ${\cal E}\!$nd}\,}}
\newcommand{\Hom}{\mbox{\it Hom}\,}
\newcommand{\Homsheaf}{\mbox{\it ${\cal H}$om}\,}
\newcommand{\Image}{\mbox{\it Im}\,}
\newcommand{\Isom}{\mbox{\it Isom}\,}
\newcommand{\Ker}{\mbox{\it Ker}\,}
\newcommand{\Morphism}{\mbox{\it Morphism}\,}
\newcommand{\Nil}{\mbox{\it Nil}\,}
\newcommand{\Object}{\mbox{\it Object}\,}
\newcommand{\ObjectCategory}{\mbox{\it ${\cal O}$bject}\,}
\newcommand{\Scheme}{\mbox{\it ${\cal S}\!$cheme}\,}
\newcommand{\Space}{\mbox{\it Space}\,}
\newcommand{\Spec}{\mbox{\it Spec}\,}
 \newcommand{\boldSpec}{\mbox{\it\bf Spec}\,}
\newcommand{\Supp}{\mbox{\it Supp}\,}
 \newcommand{\scriptsizeSupp}{\mbox{\scriptsize\it Supp}\,}
\newcommand{\anticommuting}{{\mbox{\scriptsize\it anti-c}}}
\newcommand{\scriptsizelower}{{\mbox{\scriptsize\it lower}}}
\newcommand{\nc}{\mbox{\it nc}}
\newcommand{\scriptsizeop}{{\mbox{\scriptsize\it op}}}
\newcommand{\pr}{\mbox{\it pr}}
\newcommand{\pt}{\mbox{\it pt}}
\newcommand{\scriptsizeupper}{{\mbox{\scriptsize\it upper}}}
\newcommand{\tinyclubsuit}{{\mbox{\tiny $\clubsuit$}}}
\newcommand{\longrightaarrow}{\longrightarrow\hspace{-3ex}\longrightarrow}
\newcommand{\rightaarrow}{\rightarrow\hspace{-2ex}\rightarrow}
\newcommand{\tinybullet}{{\raisebox{.2ex}{\tiny $\bullet$}}}
\begin{document}

\enlargethispage{24cm}

\begin{titlepage}

$ $

\vspace{-1.5cm} 

\noindent\hspace{-1cm}
\parbox{6cm}{\small June 2021}\
   \hspace{7cm}\
   \parbox[t]{6cm}{\small
                arXiv:yymm.nnnnn [math.AG] \\
                D(15.1), NCS(1):\\ $\mbox{\hspace{2em}}$  	
				D-brane on soft NC space
				}

\vspace{2cm}

\centerline{\large\bf
 Soft noncommutative schemes via toric geometry and}
\vspace{1ex}
\centerline{\large\bf  
 morphisms from an Azumaya scheme with a fundamental module thereto}
\vspace{1ex}
\centerline{\large\bf  
 ---  (Dynamical, complex algebraic) D-branes on a soft noncommutative space}

\bigskip

\vspace{3em}

\centerline{\large
  Chien-Hao Liu   
            \hspace{1ex} and \hspace{1ex}
  Shing-Tung Yau
}

\vspace{4em}

\begin{quotation}
\centerline{\bf Abstract}
\vspace{0.3cm}

\baselineskip 12pt  
{\small
 A class of noncommutative spaces, named {\it soft noncommutative schemes via toric geometry},
   are constructed and 
 the mathematical model for (dynamical/nonsolitonic, complex algebraic) {\it D-branes} on such a noncommutative space,
    following arXiv:0709.1515 [math.AG] (D(1)), is given.
 Any algebraic Calabi-Yau space that arises from a complete intersection in a smooth toric variety
   can embed as a commutative closed subscheme of some soft noncommutative scheme.	
 Along the study, the notion of
  {\it soft noncommutative toric schemes associated to a} (simplicial, maximal cone of index $1$) {\it fan},
  {\it invertible sheaves} on such a noncommutative space, and
  {\it twisted sections} of an invertible sheaf
   are developed    and
 Azumaya schemes with a fundamental module as the world-volumes of  D-branes are reviewed.
 Two guiding questions,
        Question~3.12 (soft noncommutative Calabi-Yau spaces and their mirror) and
	    Question~4.2.14 (generalized matrix models), 	
   are presented.  				
 } 
\end{quotation}

\vspace{13em}

\baselineskip 12pt
{\footnotesize
\noindent
{\bf Key words:} \parbox[t]{14cm}{monoid algebra;
    soft noncommutative toric scheme, soft noncommutative scheme, invertible sheaf, twisted section;
	D-brane, Azumaya scheme, morphism
 }} 

 \bigskip

\noindent {\small MSC number 2020:  14A22; 14A15, 14M25; 81T30    
} 

\bigskip

\baselineskip 10pt
{\scriptsize
\noindent{\bf Acknowledgements.}
 We thank
  Andrew Strominger, Cumrun Vafa
    for influence to our understanding of strings, branes, and gravity.
 C.-H.L.\ thanks in addition
  Chin-Lung Wang
    for communications on Noncommutative Algebraic Geometry during the brewing years;
  Enno Ke{\ss}ler, Carlos Shahbazi, Peter West
    for communications related to a concurrent in-progress work during the COVID-19 lockdown year
    that will cross the current work in the future;
  Xi Yin and Sebastien Picard
    for topic courses, spring 2020;
  Dror Bar-Natan, Galia Dafni, Ayelet Lindenstrauss
    for a surprise contact;
  Ren-De Hwa
    for bringing MuseScore to his attention, spring 2020, just before the lockdown;
  Pei-Jung Chen
    for the biweekly communications on J.S.\ Bach's sonata  and
  Ling-Miao Chou
    for the daily exchange of progress and
         comments that improve the illustrations and the tremendous moral support,
    which together boost the morale during the lockdown;
    and
  the numerous people behind the scene
    who make the continuation of projects possible during the COVID-19 pandemic times
 	 (cf.\ dedication \& continuing acknowledgements).
 The project is supported by NSF grants DMS-9803347 and DMS-0074329.
} 

\end{titlepage}

\newpage

\enlargethispage{24cm}
\begin{titlepage}

$ $

\vspace{-2em}

\centerline{\small\it
 Chien-Hao Liu dedicates this first work in the second spin-off NCS of the D-project to}
 \centerline{\small\it
    a group of familiar strangers, including}
 \centerline{\small\it
    the building common-area cleaners and waste collectors,}
 \centerline{\small\it post-and-parcel delivery-service persons,}
 \centerline{\small\it
    and workers and staff in the nearby}
 \centerline{\small\it
     Star Market/Shaw's, Walgreens, Reliable Market, Inman Square Hardware,}
 \centerline{\small\it
      CVS Pharmacy, Skenderian Apothecary,}	
 \centerline{\small\it	
 	 post office at Harvard Square, Kinko's/FedEx at Harvard Square,
      Bob Slate Stationer,}
 \centerline{\small\it
      Bank of America at Harvard Square, Cambridge Savings Bank at Harvard Square and Inman Square,}
 \centerline{\small\it	
      Sullivan Tires and Auto Service, and Cambridge Honda,}
 \centerline{\small\it
      and Christine Rishebarger, Joseph Yager, and their team members,}
 \centerline{\small\it
   and many more people behind the scene whom he would likely never have a chance to meet}
 \centerline{\small\it
   for their indispensable work and services in the COVID-19 pandemic times}
 \centerline{\small\it
   that make his life going on as normally as it can be;}
 \centerline{\small\it
   and to Lynda Azar,
      who served in the Condominium Board for over a decade to promote a lot of improvement}
 \centerline{\small\it
 	 but sadly had to change her course of life due to the COVID-19 pandemic.}
 
 \vspace{2em}
 
 \noindent
 {\footnotesize
 {\it Continuing acknowledges from C.-H.L.}:\hspace{1em}
  Since ancient times death has been a taboo in many cultures.
  Yet, no matter how glorious one may be through one's life,
   at the end one still has to face one's final fate and last journey: death, which one can only face alone.
  Having lived a very simple life in the United States for decades,
  naturally I would not expect this to be a pressing issue that I have to think about right now,
   despite that every incident of death of people I had been close to and cherished in my life
      always provoked some deep reflections.
  Then came the COVID-19 pandemic outbreak, spring 2020, in the United States.
  When one read the daily statistics of deaths and
                  lived in a situation that essential items in grocery stores had to be rationed,
    one was immediately submerged into an atmosphere of a seemingly wartime and
    the issue of death looked indeed very near and real.
  What work or deed or achievement or honor would still retain its meaning in front of death?
  This is a counter-question to the question of the meaning of life,
    which we mostly don't bother to think about in a peaceful time.
   I would like to thank in addition the following
    authors, lecturers, musicians, developers, creators, artists, and hosts
   \begin{itemize}
    \item[$\boldsymbol{\cdot}$]  
     Kelli Francis-Staite, Dominic Joyce; \hspace{.6em}
 	 Daniel S.\ Freed; \hspace{.6em}
 	 Mark Gross; \hspace{.6em}
	 Arthur Ogus; \\
     Costas Bachas; \hspace{.6em}
	 Sergio Ferrara; \hspace{.6em}	
 	 Amihay Hanany; \hspace{.6em}
     Kentaro Hori, Amer Iqbal, Cumrun Vafa;\\  
 	 Gregory W.\ Moore; \hspace{.6em}
 	 Joseph Polchinski$^\dagger$; \hspace{.6em}
 	 Leonard Susskind; \hspace{.6em}
 	 Steven Weinberg$^\dagger$
   
    \item[$\boldsymbol{\cdot}$]  
     William Christ, Richard DeLone$^\dagger$, Vernon Kliewer$^\dagger$,
  	 Lewis Rowell, William Thomson;\hspace{.6em}
 	 Craig Wright
    	
    \item[$\cdot$]  
     Arthur H.\ Benade$^\dagger$;\hspace{.6em}
     Michel Debost, Jeanne Debost-Roth;\hspace{.6em}
  	 Seta Der Hohannesian;\hspace{.6em}
     John C.\ Krell$^\dagger$;\\
     Hui-Hsuan Liang;\hspace{.6em}
     Jia-Hua Lin;\hspace{.6em}
 	 Yi-Hui Lin;\hspace{.6em}
 	 David McGill;\hspace{.6em}
 	 Emmanuel Pahud;\hspace{.6em}
 	 Amy Porter;\\
 	 Erwin Stein$^\dagger$;\hspace{.6em} 	
 	 William Vennard$^\dagger$;\hspace{.6em}
 	 Xiao-Jen Wu
    
    \item[$\cdot$]  
     \makebox[6em][l]{[{\sl MuseScore}]}
 	 Thomas Bonte, Nicolas Froment, Werner Schweer, plus volunteers;\\
 	 \makebox[6em][l]{[{\sl Audacity}]}
 	 Roger Dannenberg, Dominic Mazzoni, plus volunteers
 	
    \item[$\boldsymbol{\cdot}$]  
     Michael Schuenke,  Eric Schulte, Udo Schumacher:
     Markus Voll, Karl Wesker:  \\
     Anne M.\ Gilroy, Brian R.\ MacPherson, Lawrence M.\ Ross
    
    \item[$\cdot$]
     Arthur C.\ Guyton$^\dagger$, John E.\ Hall, Michael E.\ Hall
	
    \item[$\boldsymbol{\cdot}$]  
     Charles Kuralt$^\dagger$;\hspace{.6em}
 	 Juan Li;\hspace{.6em}
     Ying Liu;\hspace{.6em}
 	 Quan-Chong Luo$^\dagger$;\hspace{.6em}
 	 Qingchun Shu (Lao She)$^\dagger$;\hspace{.6em}
 	 Ann Voskamp;\\    
 	 Rui-Hong Xia;\hspace{.6em}
 	 Malala Yousafzai
 	
    \item[$\boldsymbol{\cdot}$]  
     Robert Barrett;\hspace{.6em}
 	 George Bridgman$^\dagger$;\hspace{.6em}
     Andrew Loomis$^\dagger$;\hspace{.6em}
 	 Frank Reilly$^\dagger$;\hspace{.6em}
 	 John Singer Sargent$^\dagger$;\\
 	 Mau-Kun Yim, Iris Yim\:\:\:/\!/\hspace{2em}				
 	 Danqing Chen;\hspace{.6em}
     Susie Hodge\:\:\:/\!/\hspace{2em}	
     Sarah Simblet;\hspace{.6em}
     Uldis Zarins 	
 	
    \item[$\boldsymbol{\cdot}$]  
     Yu-Han Ho;\hspace{.6em}
     Sarah Ivanhoe;\hspace{.6em}
     Jennifer Kries;\hspace{.6em}
 	 Yu-Han Liu;\hspace{.6em}
 	 Yo-Tse Tsai
 	
    \item[$\boldsymbol{\cdot}$]  
     Cheng Hsuan;\hspace{.6em}
     Brij Kothari;\hspace{.6em}
     Wei-San Wang	
 	
    \item[$\boldsymbol{\cdot}$]  
     Waldemar Januszczak;\hspace{.6em}
     Ziqi Li;\hspace{.6em}
 	 program hosts of 99.5 WCRB - Classical Radio Boston
   \end{itemize}
   
  \noindent
   for their book or book series, works, lecture series, master classes,
        demonstrations, freewares, creations, life stories, programs, or intellectual legacies
   that provided
              intellectual and physical challenges,
 			 inner growth,
 			 a tool to access some of the most creative minds crossing over eight hundred years
			      from Bernart de Ventadorn (1135-1194) to Aaron Copland  (1900-1990),
			 and the joy of learning and reading
          for a life in social distancing and isolation,
      immersed me in the various categories of arts and beauties humankind had ever created,
 	 and made this accidental soul-searching journey that accompanied the brewing and preparation
 	         of the current and other in-progress work less scary while under the shadow of death.
 			 \hfill ($^\dagger$: deceased; with salute from CHL)
  %
 } 
 
 %
 %

\end{titlepage}


\newpage
$ $

\vspace{-3em}

\centerline{\sc
 Soft Noncommutative Schemes and Morphisms from Azumaya Schemes Thereto
 } %

\vspace{1.2em}


\begin{flushleft}
{\Large\bf 0. Introduction and outline}
\end{flushleft}
The realization of the noncommutative feature of D-brane world-volumes ([H-W], [Po1], [Wi]; [L-Y1], [Liu]) makes D-branes
 a good candidate as a probe to noncommutative geometry.
Unfortunately, all the three building blocks of modern Commutative Algebraic Geometry ---
  the notion of localizations of a ring,
  the method of associating a topology to a ring via Spec,    and
  the approach to understand a ring by studying its category of modules  ---
 have their limitation when extending to noncommutative rings and Noncommutative Algebraic Geometry.
(See, e.g., [Ro1], [Ro2], [B-R-S-S-W] to get a feel.)
Unlike a (fundamental) superstring world-sheet, which can have a very abundant class of geometry as its target-spaces
   ---most notably Calabi-Yau spaces ---
 it is not immediate clear
   what class of noncommutative spaces can serve as the target-space of a dynamical super D-string world-sheet
  and how to construct them, let alone the even deeper issue of Mirror Symmetry phenomenon when two different target-spaces can
  give rise to isomorphic $2$-dimensional supersymmetric quantum field theories on the superstring world-sheet.

In this work we introduce a class of noncommutative spaces, named `{\it soft noncommutative schemes}' via toric geometry
  (Sec.\ 2 and Sec.\ 3),
 to partially take care of the persistent difficulty of gluing in general Noncommutative Algebraic Geometry.
This  by construction is only a very limited class of noncommutative spaces,
  yet abundant enough that all the algebraic Calabi-Yau spaces that arise from complete intersections in a smooth toric variety
  can show up as a commutative subscheme of some soft noncommutative scheme (Corollary~3.11).
A  mathematical model for {\it (dynamical, complex algebraic) D-brane on such a noncommutative space}
  is then given $\mbox{(Sec.\ 4)}$ as morphisms from Azumaya schemes with a fundamental module thereto.
From this, one is certainly very curious as to how Mirror Symmetry stands when these soft noncommutative spaces are taken
 to be the target-spaces of D-string world-sheets.
 

\bigskip

\noindent
{\bf Convention.}
 References for standard notations, terminology, operations and facts are\\
  (1) Azumaya/matrix  algebra: [Ar], [Az], [A-N-T];   \hfill    
  (2) commutative monoid: [Og];     \\     
  (3) toric geometry: [Fu];   \hfill  
  (4) aspects of noncommutative algebraic geometry: [B-R-S-S-W];   \\  
  (5) commutative algebra and  (commutative) algebraic geometry: [Ei], [E-H], [Ha]; \\    
  (6) string theory and D-branes: [Ba], [Jo], [Po1], [Po2], [Sz].
 \begin{itemize}
  %
  %
  %
  
  \item[$\cdot$]
  All commutative schemes are over ${\Bbb C}$ and Noetherian.

  \item[$\cdot$]
   For a sheaf ${\cal F}$ on a gluing system of charts,
   the notation `$s\in{\cal F}$' means a local section $s\in {\cal F}(U)$ for some chart $U$.

 \end{itemize}

\bigskip
   
\begin{flushleft}
{\bf Outline}
\end{flushleft}
\nopagebreak
{\small
\baselineskip 12pt  
\begin{itemize}
 \item[0]
  Introduction      \hfill 1

 \item[1]
  Monoids and monoid algebras      \hfill 2
  
 \item[2]
  \makebox[42.8em][s]{Soft noncommutative toric schemes\hfill 4}
  \vspace{-3.4ex}
  \begin{itemize}	 	
    \item[2.1]	
    The noncommutative affine $n$-space {\it nc}$\mathbf{A}^n_{\Bbb C}$ and its $0$-dimensional subschemes
	  \dotfill 5
	
	\item[2.2]
	Soft noncommutative toric schemes associated to a fan      \dotfill 8
  \end{itemize}
  
 \item[3]
  Invertible sheaves on a soft noncommutative toric scheme, twisted sections,     and\\
  soft noncommutative schemes via toric geometry      \hfill 15
  
 \item[4]
  \makebox[42.8em][s]{(Dynamical) D-branes on a soft noncommutative space      \hfill 21}
  \vspace{-3.6ex}
  \begin{itemize}	 	
    \item[4.1]	
	Review of Azumaya/matrix schemes with a fundamental module over ${\Bbb C}$      \dotfill 22
	
	\item[4.2]
	Morphisms from an Azumaya scheme with a fundamental module to a soft\\
	noncommutative scheme $\breve{Y}$: (Dynamical) D-branes on $\breve{Y}$      \dotfill 23
	
    \item[4.3]	
	Two equivalent descriptions of a morphism $\varphi: X^{\!A\!z}\rightarrow \breve{Y}$    \dotfill 30
  \end{itemize}
  
 %
 %
 %
\end{itemize}
} 

\newpage

\section{Monoids and monoid algebras}

Basic definitions of monoids and monoid algebras we need for the current work are collected in this section
  to fix terminology and notation.
Readers are referred to [De], [Ge], [G-H-V] and [Og], from where these definitions are adapted, for more details.

\bigskip

\begin{sdefinition}{\bf [monoid]}\; {\rm
 A {\it monoid} is a triple $(M, \star, e_M)$, abbreviated by $(M,\star)$ or $M$, that consists of
    a set $M$,
	an associative binary operation $\star$, and
	a (two-sided) identity element $e_M$ of $(M, \star)$:
	  $e_M\star m = m\star e_M = m$ for all $m\in M$.
 An element $m\in M$ is {\it invertible} if there exists an element $m^\prime\in M$
   such that $m\star m^\prime = m^\prime\star m = e_M$.
 When $m$ is invertible, such $m^\prime$ is unique and is denoted $m^{-1}$, called the {\it inverse} of $m$.
 The {\it center} $\Center(M)$ of $M$
    is the submonoid  $\{c\in M\,|\, c\star m = m\star c,\, \mbox{for all $m\in M$} \}$ of $M$.
 A {\it homomorphism of monoids} (or {\it monoid homomorphism})
   is a function $\theta: M\rightarrow N$ between monoids such that
   $\theta(e_M)=e_N$ and $\theta(m\star m^\prime)=\theta(m)\star\theta(m^\prime)$.
 $\theta$ is called a {\it monomorphism} if it is injective, and an {\it epimorphism} if surjective.
   
 Given a set $S$, the {\it free monoid} $F(S)$ associated to $S$ is
  the monoid that consists of
   an element $e$ and
   all formal {\it words} $s_1\cdots s_k$ of finitely many elements of $S$,
    with the product rules
    $s_1\cdots s_k\star s_1^\prime\cdots s_l^\prime = s_1\cdots s_ks_1^\prime\cdots s_l^\prime$
	  and
	$e\star s_1\cdots s_k = s_1\cdots s_k \star e = s_1\cdots s_k$.
    
 Let $S\subset M$ be a subset of a monoid.
 We say that {\it $M$ is generated by $S$} if the monoid homomorphism
  $F(S)\rightarrow M$ specified by the inclusion $S\hookrightarrow M$,
     with $e\mapsto e_M$ and\\     $s_1\cdots s_k\mapsto s_1\star\cdots\star s_k$,
	is surjective.
 We say that a monoid $M$ is {\it finitely generated} if $M$ is generated by a finite subset $S\subset M$.
 We say that
   a generating set $S$ of $M$ is {\it inverse-closed} if $s\in S$ and $s$ is invertible in $M$, then $s^{-1}\in S$.
   
 By convention, when a monoid is written multiplicatively and the monoid operation $\star$ is clear from the content,
   $m_1\star m^\prime$ will be denoted simply $m\cdot m^\prime$ or $mm^\prime$.
 Commutative monoids are often written additively, with $\star$ denoted $+$ and $e_M$ denoted $0$.
}\end{sdefinition}

%

\medskip

\begin{sdefinition} {\bf [product and central extension]}\; {\rm
 Let $(C, \cdot, 1)$ and $(M, \star, e_M)$ be monoids.
 The {\it product} of $(C, \cdot, 1)$ and $(M, \star, e_M)$ is the monoid with elements $(c, m)\in C\times M$
   with $(c_1, m_1)\ast (c_2, m_2)= (c_1\cdot c_2, m_1\star m_2)$ and identity $(1, e_M)$.
 When $C$ is commutative, we'll denote the product $C\cdot M$ or $CM$ and elements $cm$ .
 In this case, $C\hookrightarrow C\cdot M\rightaarrow M$, with $c\mapsto c\,e_M$ and $cm\mapsto m$,
  is a {\it central extension} of $M$.
}\end{sdefinition}

\medskip

\begin{sdefinition} {\bf [Cayley graph of monoid with finite generating set]}\; {\rm
 Let $(M, \star, e_M)$ be a monoid with an inverse-closed, finite generating set $S$.
 Then the {\it Cayley graph} $\Gamma_\Cayleyscriptsize(M,S)$ of $(M,S)$ is a directed graph with
    the set of vertices $M$ and, for  every pair $(m_1, m_2)$ of vertices
    a directed edge labelled by $s\in S$ from $m_1$ to $m_2$ if $m_2 = m_1\star s$. 	
 We take the convention that if both $s$ and $s^{-1}$ are in $S$,
   then the edge associated to $s$ and that associated to $s^{-1}$ coincide with opposite directions;
 (i.e.\ the corresponding edge is bi-directed).
 Since every $m\in M$ can be expressed as a finite word in letters from $S$,
   $\Gamma_\Cayleyscriptsize(M,S)$ is a connected graph.
}\end{sdefinition}

\medskip

\begin{sdefinition} {\bf [edge-path monoid]}\; {\rm
 (Continuing Definition~1.3.)
 Endow the Cayley graph $\Gamma :=\Gamma_\Cayleyscriptsize(M,S)$ of $(M,S)$
    with the topology from the geometric realization of $\Gamma$ as a $1$-dimensional simplicial complex.
 An {\it edge-path at the vertex $m_0$} on $\Gamma$, with the {\it terminal vertex} $m_t$,
  is a direction-preserving continuous map $\gamma: [0,1]\rightarrow \Gamma$,
     with $\gamma(0)=m_0$ and $\gamma(1)=m_t$,
  that is piecewise linear after a subdivision of the interval $[0,1]$ as a $1$-dimensional simplicial complex.
 Thus, associated to each edge-path $\gamma$ at $m_0$  is a word $w_\gamma = s_1\cdots s_k$ in letters from $S$,
   where $s_1$ is the initial edge $\gamma$ takes from the vertex $m_0$ and $s_k$ is the final edge $\gamma$ takes
   to the terminal vertex $m_t$.
 Given an edge-path $\gamma$ at $m_0$ and a vertex $m\in M$,
  there is a {\it translation}  $T_m: \gamma \mapsto\,  _m\!\gamma$,
   where $_m\!\gamma$ is the edge-path at $m$ that takes the same word $w_\gamma$ as $\gamma$
    and is piecewise linear under the same subdivision of $[0,1]$.
 The constant edge-path $\gamma: [0,1]\rightarrow \Gamma$ such that $\gamma([0,1])=m\in M$
  will be denoted by the vertex $m$.
 
 There is an operation $\star$ on the set of edge-paths:
  \begin{itemize}
   \item[$\cdot$]
   For edge-paths $\gamma_1$ and $\gamma_2$ on $\Gamma$ such that $\gamma_1(1)=\gamma_2(0)$,
   define
   $$
    (\gamma_1\star\gamma_2)(t)\;
	    =\; \left\{
	          \begin{array}{ll}
			    \gamma_1(2t)\,,         & \mbox{for}\;\;  t \in [0,\, \frac{1}{2}]\,;  \\[1.2ex]
	            \gamma_2(2t-1)\,,     & \mbox{for}\;\;  t \in [\frac{1}{2},\, 1]\,.
	          \end{array}
             \right.
   $$
   
  \item[$\cdot$]
  For general edge-paths $\gamma_1$ and $\gamma_2$ on $\Gamma$, define
    $$
	  \gamma_1\star\gamma_2\; :=\; \gamma_1\star T_{\gamma_1(1)}(\gamma_2)\,.
	$$
 \end{itemize}
 In particular,
   $m\star \gamma = T_m(\gamma)$,
   $\gamma(0)\star\gamma = \gamma$,    and
   $\gamma\star m = \gamma$,
  after a subdivision of $[0,1]$ and a simplicial map on the subdivided $[0,1]$ that fixes $\{0,1\}$,
      for all vertex $m$ and edge-path $\gamma$.

 Consider the space $\breve{M}$ of homotopy classes of edge-paths at the vertex $e_M$ of $\Gamma$
  relative to the boundary $\{0,1\}$ of the interval $[0,1]$.
 The operation $\star$ on edge-paths on $\Gamma$ descends to an associative binary operation, still denoted by $\star$,
   on $\breve{M}$ with the identity the constant path $e_M$.
 The monoid $(\breve{M}, \star, e_M)$ will be called the {\it edge-path monoid} associated to $(M, S)$.
}\end{sdefinition}

\medskip

\begin{sdefinition} {\bf [monoid algebra]}\; {\rm
  Let $(M, \star, e_M)$ be a monoid.
  The ${\Bbb C}$-vector space\\     $\bigoplus_{m\in M} {\Bbb C}\cdot m$
    with the binary operation $\ast$ generated by ${\Bbb C}$-linear expansion of
     $$
	   (c_1m_1)\ast (c_2m_2) \; =\; (c_1c_2)\cdot (m_1\star m_2)
	 $$
  is called the {\it monoid algebra} over ${\Bbb C}$ associated to $M$,
  in notation ${\Bbb C}\langle M\rangle$ and the identity $1\cdot e_M=: 1$.
 When $M$ is commutative, we will denote ${\Bbb C}\langle M\rangle$ by ${\Bbb C}[M]$.
}\end{sdefinition}

\medskip

\begin{slemma}{\bf [natural homomorphism]}\;
 There is a built-in monoid epimorphism\\ $\pi: \breve{M}\rightarrow M$,
  which extends to an epimorphism
    $\pi: {\Bbb C}\langle \breve{M}\rangle\rightarrow {\Bbb C}\langle M\rangle$
    of monoid algebras over ${\Bbb C}$.
\end{slemma}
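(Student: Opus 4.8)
The plan is to build $\pi$ by hand at the level of edge-paths on the Cayley graph $\Gamma$ of $(M,S)$, check that it descends to $\breve{M}$, that it is multiplicative and surjective, and then to obtain the statement for monoid algebras from the standard ${\Bbb C}$-linear functoriality of the construction $M\mapsto{\Bbb C}\langle M\rangle$.

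\emph{Definition of $\pi$ on $\breve{M}$.} For an edge-path $\gamma$ at $e_M$ with terminal vertex $m_t$, I would set $\pi([\gamma]):=m_t=\gamma(1)$; since the vertex set of $\Gamma$ is $M$ and since any homotopy of edge-paths rel $\{0,1\}$ fixes the value at $1$, this is a well-defined function $\pi\colon\breve{M}\to M$ sending the constant path $e_M$ to $e_M$. I would then pass, if necessary, to a representative traversing full edges between vertices, with associated word $w_\gamma=s_1\cdots s_k$ in letters of $S$, and show by induction on the length of an initial segment that $\gamma(1)=s_1\star\cdots\star s_k$: after the initial segment carrying $s_1\cdots s_i$ one sits at the vertex $s_1\star\cdots\star s_i$, and the edge labelled $s_{i+1}$ issuing from it terminates, by the defining incidence of $\Gamma$, at $(s_1\star\cdots\star s_i)\star s_{i+1}$. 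Thus $\pi$ is simultaneously the ``terminal-vertex map'' and the ``word-product map''.

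\emph{Multiplicativity, surjectivity, and passage to algebras.} For classes represented by $\gamma_1,\gamma_2$ at $e_M$, their product in $\breve{M}$ is represented by $\gamma_1\star T_{\gamma_1(1)}(\gamma_2)$, and the translated path $T_m(\gamma_2)$ starts at $m$ and carries the same word as $\gamma_2$, so by the computation above its terminal vertex is $m\star\gamma_2(1)$; hence $\pi([\gamma_1]\star[\gamma_2])=\gamma_1(1)\star\gamma_2(1)=\pi([\gamma_1])\star\pi([\gamma_2])$, so $\pi$ is a monoid homomorphism. It is surjective because $S$ generates $M$: a given $m\in M$ equals $s_1\star\cdots\star s_k$ for some word in $S$, and following the edges labelled $s_1,\ldots,s_k$ from $e_M$ (each step legal, since every element of $M$ is a vertex of $\Gamma$) produces an edge-path with terminal vertex $m$. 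Finally, any monoid homomorphism $\theta\colon M\to N$ extends ${\Bbb C}$-linearly to a ${\Bbb C}$-algebra homomorphism ${\Bbb C}\langle M\rangle\to{\Bbb C}\langle N\rangle$ via $m\mapsto\theta(m)$ — the relation $(c_1m_1)\ast(c_2m_2)=(c_1c_2)(m_1\star m_2)$ together with $\theta(m_1\star m_2)=\theta(m_1)\star\theta(m_2)$ shows it respects $\ast$, and it sends $1\cdot e_M$ to $1\cdot e_N$ — and this induced map is surjective whenever $\theta$ is, its image being a ${\Bbb C}$-subspace of ${\Bbb C}\langle N\rangle$ containing the whole basis $\{n:n\in N\}$. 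Applying this to $\theta=\pi$ completes the proof.

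\emph{Expected main obstacle.} There is no deep content here; the statement is essentially bookkeeping about the constructions of the Cayley graph and of the edge-path monoid. The one point deserving care is reconciling the two descriptions of $\pi$: the ``terminal vertex'' description makes well-definedness on homotopy classes transparent but obscures multiplicativity, whereas the ``word product'' description makes multiplicativity and surjectivity transparent but obscures well-definedness. The bridge is the identity $T_m(\gamma)(1)=m\star\gamma(1)$, i.e.\ that translation $T_m$ acts on terminal vertices by left multiplication by $m$; this is precisely what identifies the topological concatenation $\gamma_1\star T_{\gamma_1(1)}(\gamma_2)$ with the product $\gamma_1(1)\star\gamma_2(1)$ in $M$. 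In particular no covering-space analysis of $\Gamma$ is needed.
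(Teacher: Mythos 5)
Your proposal is correct and is essentially the same argument the paper gives: the paper's proof is the one-line observation that the terminal-vertex map $\gamma\mapsto\gamma(1)$ on edge-paths at $e_M$ is surjective onto $M$, with the remaining verifications (well-definedness on homotopy classes, multiplicativity via $T_m(\gamma)(1)=m\star\gamma(1)$, surjectivity from $S$ generating $M$, and ${\Bbb C}$-linear extension to the monoid algebras) left implicit. You have simply spelled those checks out, which is fine.
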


\medskip

\begin{proof}
 This follows from the surjective map
   $\pi: \{\mbox{edge-paths $\gamma$ at $e_M$}\}   \rightarrow M$,
   with $\gamma \mapsto \gamma(1)$.

\end{proof}

\medskip

\begin{sexample} {\bf [lattice of rank $n$]}\; {\rm
 Let $M$ be the lattice of rank $n$\, (i.e.\ $\simeq {\Bbb Z}^n$ as a ${\Bbb Z}$-module).
 As a commutative monoid written multiplicatively, $M\simeq \times_{i=1}^n z_i^{\Bbb Z}$ and
  generated by the inverse-closed subset $S= \{z_1, z_1^{-1}, \,\cdots,\, z_n, z_n^{-1}\}$.
 The associated ${\Bbb C}$-algebra is a polynomial ring
   $$
    {\Bbb C}[M]\;=\;
     {\Bbb C}[z_1, z^{-1},\, \cdots,\, z_n, z_n^{-1}]\;
        \simeq\; \frac{{\Bbb C}[z_1, z_1^\prime,\, \cdots,\, z_n, z_n^\prime]}
		                   {(z_1z_1^\prime-1,\,\cdots,\, z_nz_n^\prime-1)}\,.
   $$
 The Cayley diagram $\Gamma$ of $(M,S)$ is a homogeneous graph of constant valence $2n$ for all vertices,
   with the base vertex $1\in M$.
 Under the above presentation of $M$,
  the edge-path monoid $\breve{M}$  is
    $\langle z_1, z_1^{-1},\,\cdots,\, z_n, z_n^{-1} \rangle$
	(the noncommutative monoid generated by $z_1, z_1^{-1},\,\cdots\,, z_n, z_n^{-1}$
	   subject to relations $z_iz_i^{-1}= z_i^{-1}z_i=1$, $i=1,\,\ldots\,, n$         and
  the associated monoid algebra
   $$
    {\Bbb C}\langle\breve{M}\rangle\; =\;
	  {\Bbb C}\langle z_1,z_1^{-1},\,\cdots\,, z_n, z_n^{-1} \rangle
	     \simeq  \frac{{\Bbb C}\langle z_1, z_1^\prime, \,\cdots\,, z_n, z_n^\prime\rangle}
		                  {(z_1z_1^\prime-1, z_1^\prime z_1-1,\,\cdots\,,\, z_nz_n^\prime-1, z_n^\prime z_n-1)}\,.
   $$
 Here,
  ${\Bbb C}\langle z_1, z_1^\prime, \,\cdots\,, z_n, z_n^\prime\rangle$
     is the free associative ${\Bbb C}$-algebra with $2n$ generators and\\
  $(z_1z_1^\prime-1, z_1^\prime z_1-1,\,\cdots\,,\, z_nz_n^\prime-1, z_n^\prime z_n-1)$
    is the two-sided ideal generated by the elements indicated.
 The monoid homomorphism $\pi: \breve{M}\rightarrow M$ from the terminal-vertex-of-edge-path-at-$1$ map    and
    the associated monoid-algebra homomorphism
        $\pi: {\Bbb C}\langle\breve{M}\rangle \rightarrow {\Bbb C}[M]$
   coincide with the quotient by the commutator ideal $[\breve{M}, \breve{M}]$.
 Cf.\ {\sc Figure}~1-1.
    %
     	
\begin{figure}[htbp]
 \bigskip
  \centering
  \includegraphics[width=0.80\textwidth]{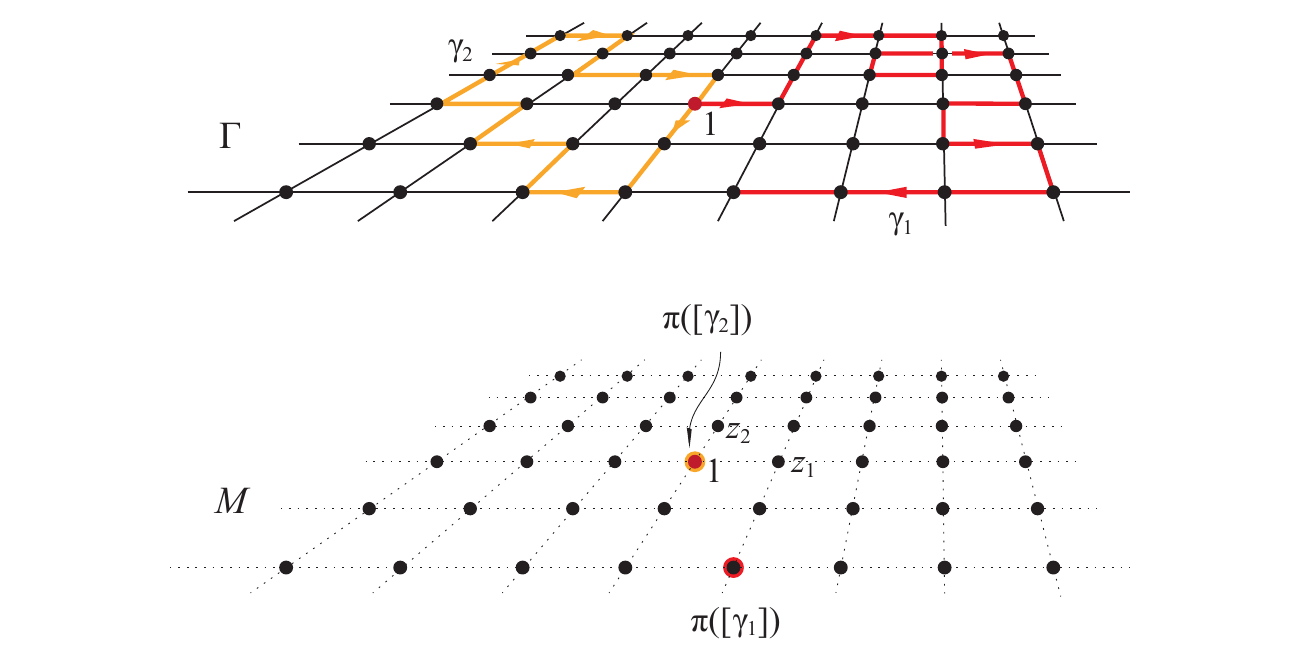}
  
  \bigskip
 \centerline{\parbox{13cm}{\small\baselineskip 12pt
  {\sc Figure}~1-1.
  The edge-path monoid $\breve{M}$ associated to a lattice $M$ (at the bottom) with generators indicated by the dashed grid.
  The rank $2$ case is illustrated here.
  All the edges of the Cayley graph $\Gamma$ (at the top) of $M$-with-generators are bi-directed.
  Examples of two elements of $\breve{M}$,
     represented by edge-paths
     $\gamma_1
	   =z_1z_2^3z_1^2z_2^{-2}z_1^{-1}z_2z_1^2z_2^{-2}z_1^{-1}z_2^{-1}z_1z_2^{-1}z_1^{-3}$
	   and
	 $\gamma_2
	  =z_2^{-2}z_1^{-1}z_2z_1^{-1}z_2z_1^{-1}z_2^3z_1z_2^{-2}z_1^2z_2^{-1}$
     at the identity $1\in\,\mbox{Vertex}(\Gamma)$,     and
   their image under the built-in monoid homomorphism $\pi:\breve{M}\rightarrow M$
   are indicated: $\pi([\gamma_1])=z_1z_2^{-2}$ and $\pi([\gamma_2])=1$.
  }}
\end{figure}	
	
 The complex torus $\mathbf{T}:={\Bbb T}^n := ({\Bbb C}^\times)^n$ action on the generators
  $$
    (z_1,\,\cdots\,\, z_n)\;\stackrel{g_\mathbf{t}}\longmapsto\; (t_1z_1,\,\cdots\,,\, t_nz_n)\,,
      \hspace{2em}\mathbf{t}=(t_1,\,\cdots\,, t_n)\,\in\,\mathbf{T}\,,	
  $$
 induces a ${\Bbb T}^n$-action on ${\Bbb C}\cdot M$ and ${\Bbb C}\cdot\breve{M}$,
   leaving each ${\Bbb C}$-factor of ${\Bbb C}\cdot M$ and ${\Bbb C}\cdot \breve{M}$ invariant,
 that render $\pi: {\Bbb C}\cdot\breve{M}\rightarrow {\Bbb C}\cdot M$ ${\Bbb T}^n$-equivariant.
 This extends to a ${\Bbb T}^n$-action on ${\Bbb C}\langle\breve{M}\rangle$ and ${\Bbb C}[M]$
  under which $\pi: {\Bbb C}\langle\breve{M}\rangle\rightarrow {\Bbb C}[M]$ is ${\Bbb T}^n$-equivariant.   	
}\end{sexample}

\bigskip

The lattice $M$  of rank $n$ in Example~1.7 is the starting point of constructions of the current work.
As a commutative monoid, there are occasions when it is more convenient for $M$ to be written additively,
   rather than multiplicatively.
We shall switch between these two pictures freely and use whichever suits better.

\bigskip
\bigskip

\section{Soft noncommutative toric schemes}

Soft noncommutative toric schemes are introduced in this section.
The setting and the terminology used indicate our focus on the function ring, rather than the point-set and topology,
  of a noncommutative space
 while striving to retain enough underlying geometric picture for the purpose of studying
 (dynamical) D-branes on a noncommutative target-space in string theory
 within the realm of Algebraic Geometry.
(Cf.~[L-Y1] (D(1)), [L-L-S-Y] (D(2)), and Sec.\ 4.2 of the current work.)

\bigskip

\subsection{The noncommutative affine $n$-space $\nc\mathbf{A}^n_{\Bbb C}$
        and its $0$-dimensional subschemes}

\begin{definition} {\bf [affine scheme, function ring, noncommutative affine $n$-space over ${\Bbb C}$]}\; {\rm
 Let $\mbox{\bf Ring}$ be the category of rings.
 An object $X$ in the opposite category $\mbox{\bf Ring}^\scriptsizeop$ of $\mbox{\bf Ring}$
  is called an {\it affine scheme}.
 The ring that underlies $X$ is called the {\it function ring} of $X$.
 If the function ring of $X$ is noncommutative (resp.\ commutative), then
  we say that $X$ is a {\it noncommutative affine scheme} (resp.\ {\it commutative affine scheme}).
 In particular,
 we define a {\it noncommutative affine $n$-space} over ${\Bbb C}$
   to be an object in $\mbox{\bf Ring}^\scriptsizeop$
   that corresponds to a free noncommutative associative ${\Bbb C}$-algebra generated by $n$ letters,
   in notation ${\Bbb C}\langle z_1,\,\cdots\,, z_n\rangle$.
 Since any two free noncommutative associative ${\Bbb C}$-algebras generated by $n$ letters are isomorphic
    for a fixed $n$,
  we shall denote a {\it noncommutative affine $n$-space} over ${\Bbb C}$ commonly by
    $\nc\mathbf{A}^n_{\Bbb C}$ or simply $\nc\mathbf{A}^n$.
 $\nc\mathbf{A}^n$ is {\it smooth} in the sense of the following extension/lifting property:
  \begin{itemize}
   \item[$\cdot$]
    For any
	 ${\Bbb C}$-algebra homomorphism ${\Bbb C}\langle z_1,\,\cdots\,,\,z_n\rangle\rightarrow A$  and
	  any ${\Bbb C}$-algebra epimorphism $B\rightaarrow A$,
    there exists a ${\Bbb C}$-algebra homomorphism
	  ${\Bbb C}\langle z_1,\,\cdots\,,\,z_n\rangle\rightarrow B$ that makes the following diagram commute
      $$
       \xymatrix{
	    & && B\ar@{->>}[d]  \\
	    &{\Bbb C}\langle z_1,\,\cdots\,,\,z_n\rangle \ar[rru]\ar[rr]    && A  &.
	   }
      $$	
  \end{itemize}
}\end{definition}

\medskip

\begin{definition} {\bf [master commutative subscheme of $\nc\mathbf{A}^n$]}\; {\rm
 The natural quotient
  $$
   {\Bbb C}\langle z_1,\,\cdots\,,\, z_n\rangle\;
    \longrightaarrow\;  {\Bbb C}[z_1,\,\cdots\,,\, z_n]\,,\:\:\:\:   z_i\;\longmapsto\; z_i
  $$
  of ${\Bbb C}$-algebras, with kernel the two-sided ideal $(z_iz_j-z_jz_i\,|\, 1\le i< j\le n)$,
 defines an embedding $\mathbf{A}^n\hookrightarrow \nc\mathbf{A}^n$,
 called the {\it master commutative subscheme} of $\nc\mathbf{A}^n$.
}\end{definition}

\medskip

\begin{definition} {\bf [closed subscheme of $\nc\mathbf{A}^n$]}\; {\rm
 A {\it closed subscheme} of $\nc\mathbf{A}^n$
  is a ${\Bbb C}$-algebra quotient ${\Bbb C}\langle z_1,\,\cdots\,,\,z_n\rangle \longrightaarrow A$.
 When $A$ is commutative, then the closed subscheme is called a {\it commutative closed subscheme}
  of $\nc\mathbf{A}^n$.
 Since when $A$ is commutative,  such a quotient always factors as
  $$
   \xymatrix{
	&{\Bbb C}\langle z_1, \,\cdots\,,\, z_n\rangle   \ar@{->>}[rr]   \ar@{->>}[rd] && A \\
	&& {\Bbb C}[z_1,\,\cdots\,,\, z_n] \ar@{->>}[ru]   &&,
	}
  $$
 a commutative closed subscheme of $\nc\mathbf{A}^n$
  is always contained in the master commutative subscheme
  $\mathbf{A}^n\subset\nc\mathbf{A}^n$.
 When $A$ is a finite-dimensional ${\Bbb C}$-algebra,
  the closed subscheme is called a {\it $0$-dimensional subscheme} of $\nc\mathbf{A}^n$.
   %
   %
 In particular,
 a {\it ${\Bbb C}$-point} on $\nc\mathbf{A}^n$ is a ${\Bbb C}$-algebra quotient
  ${\Bbb C}\langle z_1,\,\cdots\,,\,z_n\rangle \longrightaarrow {\Bbb C}$.
Since ${\Bbb C}$ is commutative,
  the set of ${\Bbb C}$-points of $\nc\mathbf{A}^n$ coincide naturally with the set of
  ${\Bbb C}$-points of $\mathbf{A}^n$
  via the built-in embedding $\mathbf{A}^n\hookrightarrow \nc\mathbf{A}^n$.
}\end{definition}

\medskip

\begin{definition} {\bf [punctual versus nonpunctual $0$-dimensional subscheme]}\; {\rm
  A $0$-dimen- sional subscheme
   ${\Bbb C}\langle z_1,\,\cdots\,,\,z_n\rangle \rightaarrow A$ of $\nc\mathbf{A}^n$ is called {\it punctual}
   if $A$ admits a ${\Bbb C}$-algebra quotient
   $$
     A\;\longrightaarrow\;  \Center(A)/\Nil_{\scriptsizeCenter(A)}\,.
   $$
 Here,
  $\Center(A)\subset A$ is the {\it center} of $A$ and $\Nil_{\scriptsizeCenter(A)}$
  is the ideal of nilpotent elements of $\Center(A)$.
 Else, the $0$-dimensional subscheme is called {\it nonpunctual} (i.e.\ ``fuzzy without core").
}\end{definition}

\medskip

\begin{example}
{\bf [Grassmann points, Azumaya/matrix points, other special points on $\nc\mathbf{A}^n$]}\; {\rm
 (1)
  All commutative $0$-dimensional subschemes of $\nc\mathbf{A}^n$ are punctual.
  
 (2)
  A {\it Grassmann point} (resp.\ {\it Azumaya} or {\it matrix point}) on $\nc\mathbf{A}^n$
    is a ${\Bbb C}$-algebra quotient ${\Bbb C}\langle z_1,\,\cdots\,,\,z_n\rangle  \rightaarrow A$
    such that $A$ is isomorphic to a Grassmann algebra over ${\Bbb C}$
    $$
      {\Bbb C}[\theta_1,\,\cdots\,,\theta_r]^\anticommuting\;
			  :=\; {\Bbb C}\langle \theta_1,\,\cdots\,,\,\theta_r\rangle/
			              (\theta_\alpha\theta_\beta+\theta_\beta\theta_\alpha  \,|\, 1\le \alpha, \beta\le r)
    $$
     for some $r$
    (resp.~a matrix algebra $M_r({\Bbb C})$ over $\Bbb C$ for some rank $r\ge 2$).
    Both are noncommutative points on $\nc\mathbf{A}^n$.
	However, {\it Grassmann points are punctual while Azumaya/matrix points are nonpunctual}
	  since there exists no ${\Bbb C}$-algebra homomorphism from $M_r({\Bbb C})$ to ${\Bbb C}$
	  for $r\ge 2$.
  		
 (3)
   Other special points on $\nc\mathbf{A}^n$ include points
     ${\Bbb C}\langle z_1,\,\cdots\,,\, z_n\rangle  \rightaarrow A$
     where $A$ is isomorphic to the ${\Bbb C}$-algebra $T^{\scriptsizeupper}_r({\Bbb C})$
	 of upper triangular matrix for some rank $r\ge 2$
     (resp.\  the ${\Bbb C}$-algebra $T^{\scriptsizelower}_r({\Bbb C})$
	  of lower triangular matrix for some rank $r\ge 2$.)
   They are called {\it upper-triangular matrix points} (resp.\ {\it lower-triangular matrix points}).
   Unlike Azumaya/matrix points on $\nc\mathbf{A}^n$, both are punctual
     under the correspondence that sends an upper-or-lower triangular matrix to its diagonal.
}\end{example}

\medskip

\begin{definition} {\bf [nested master $l$-commutative closed subscheme of $\nc\mathbf{A}^n$]}\; {\rm
 Given two two-sided ideals $I,\, J$ in ${\Bbb C}\langle z_1,\,\cdots\,,\, z_n\rangle$.
 Denote by $[I,J]$ the two-sided ideal in   ${\Bbb C}\langle z_1,\,\cdots\,,\, z_n\rangle$
  generated by elements of the form $[f,g]:= fg-gf$, where $f\in I$ and $g\in J$.
 Now let $\frak{m}_0$ be the two-sided ideal $(z_1,\,\cdots\,,\,z_n)$ of
  ${\Bbb C}\langle z_1,\,\cdots\,,\, z_n\rangle$.
 Then note that $[\frak{m}_0, \frak{m}_0]$ and $(z_iz_j-z_jz_i\,|\, 1\le i<j\le n )$  coincide.
 Consider the closed scheme $\mathbf{A}^n_{(l)}$ of $\nc\mathbf{A}^n$,
   $l=1, 2, \cdots$,
  associated the ${\Bbb C}$-algebra quotient
  $$
   {\Bbb C}\langle z_1,\,\cdots\,,\, z_n\rangle\;
      \longrightaarrow\;   {\Bbb C}\langle z_1,\,\cdots\,,\, z_n\rangle/[\frak{m}_0, \frak{m}_0^l]\,.
  $$
 Since
   $[\frak{m}_0, \frak{m}_0] \subset [\frak{m}_0, \frak{m}_0^2]
       \subset [\frak{m}_0, \frak{m}_0^3] \subset \cdots$,
  one has a nested sequence of closed subschemes in $\nc\mathbf{A}^n$
   $$
    \mathbf{A}^n=\mathbf{A}^n_{(1)}\; \hookrightarrow\; \mathbf{A}^n_{(2)} \;
	  \hookrightarrow\; \mathbf{A}^n_{(3)}\; \hookrightarrow\; \cdots\,.
   $$
 Any function on $\nc\mathbf{A}^n$ of the form $f_1\cdots f_l$, where $f_i\in \frak{m}_0$ for all $i$,
  becomes commutative when restricted to $\mathbf{A}^n_{(l^\prime)}$ for $l^\prime \le l$.
 Furthermore, any closed subscheme of $\nc\mathbf{A}^n$ with this property must be a closed subscheme of
   $\mathbf{A}^n_{(l)}$.
 Thus, we name $\mathbf{A}^n_{(l)}$
   the {\it master $l$-commutative subscheme} of $\nc\mathbf{A}^n$
   (with respect to the choice of coordinate functions $z_1,\,\cdots\,,\,z_n$ on $\nc\mathbf{A}^n$).
 The restriction of any function $f$ on $\nc\mathbf{A}^n$ of degree $d\ge 1$
   to any $\mathbf{A}^n_{(l)}$ with $l\le d$ is commutative on $\mathbf{A}^n_{(l)}$. 	
}\end{definition}

\medskip

\begin{example} {\bf [$0$-dimensional subscheme and $\mathbf{A}^n_{(l)}$]}\;  {\rm
 How a $0$-dimensional subscheme in $\nc\mathbf{A}^n$ intersects the nested sequence
   $\mathbf{A}^n_{(l)}$, $l=1,2,\ldots$, gives one a sense of a {\it depth of noncommutativity}
   the $0$-dimensional subscheme of $\nc\mathbf{A}^n$ is located at.\:\:
  (1)
   A commutative $0$-dimensional subscheme of $\nc\mathbf{A}^n$ lies in
            $\mathbf{A}^n=\mathbf{A}_{(1)}$ and hence in $\mathbf{A}^n_{(l)}$ for all $l$.

  (2)
   A punctual $0$-dimensional subscheme of $\nc\mathbf{A}^n$ contains a commutative subscheme
          and hence must have nonempty intersection with $\mathbf{A}^n$
		  and hence has nonempty intersection with $\mathbf{A}^n_{(l)}$ for all $l$.
   They thus lie in an infinitesimal neighborhood of $\mathbf{A}^n$	in $\nc\mathbf{A}^n$.	
		
  (3)
   Consider the Azumaya/matrix point on $\nc\mathbf{A}^{r^2}$
    $$
    {\Bbb C}\langle z_{ij}\;|\; 1\le i, j\le r \rangle\; \longrightaarrow\; M_r({\Bbb C})\,,\hspace{2em}
	 z_{ij}\;\longmapsto\; e_{ij}\,,
    $$			
    where $e_{ij}$ is the $r\times r$ matrix with $ij$-entry $1$ and elsewhere $0$.
   The kernel of this ${\Bbb C}$-algebra epimorphism is the two-sided ideal $I$ generated by
    $$
	   z_{ij}z_{i^\prime j^\prime}- \delta_{ji^\prime}z_{ij^\prime}\,,\;1\le i,j,i^\prime,j^\prime \le r\,,\;
	   \mbox{where $\delta_{\tinybullet}$ is the Kronecker delta.}
	$$
   In particular,
	 $z_{ij}- z_{ij^\prime_1}z_{j^\prime_1j^\prime_2}\cdots z_{j^\prime_{l-1}j^\prime_l}z_{j^\prime_lj}
	  \in I$
      for all $l=1,2,\ldots$.
   Recall the two-sided ideal $\frak{m}_0=(z_{ij}\,|\,1\le i,j\le r)$
     of ${\Bbb C}\langle z_{ij}\;|\; 1\le i, j\le r \rangle$.
   Then the two-sided ideal generated by $I+[\frak{m_0}, \frak{m}^l]$
         in $ {\Bbb C}\langle z_{ij}\;|\; 1\le i, j\le r \rangle$
    is the whole ${\Bbb C}\langle z_{ij}\;|\; 1\le i, j\le r \rangle$.	
	This shows that
	 this Azumaya point on $\nc\mathbf{A}^{r^2}$ has no intersection with $\mathbf{A}^{r^2}_{(l)}$
	  for all $l$.
	In a sense, this Azumaya point is located deep in the noncommutative part of $\nc\mathbf{A}^n$.	
}\end{example}

\medskip

\begin{definition} {\bf [connectivity and hidden disconnectivity of $0$-dimensional subscheme]}\; {\rm
 Given a $0$-dimensional subscheme
  ${\Bbb C}\langle z_1,\,\cdots\,, z_n\rangle\rightaarrow A$
  on $\nc\mathbf{A}^n$.
 We say that the $0$-dimensional subscheme is {\it connected}
    if $A$ is not a product $A_1\times A_2$ of two ${\Bbb C}$-algebras.
 Else, we say that the $0$-dimensional subscheme is {\it disconnected}.
 If $A$ is not a product, yet it admits a decomposition of $1$ by non-zero orthogonal idempotents
    $$
       1\;=\; e_1+\cdots+e_k\,,
    $$	
   for some $k$, where $e_i\ne 0$ and $e_i^2=e_i$ for all $i$ and $e_ie_j=e_je_i=0$ for all $i\ne j$,
 then we say that the connected $0$-dimensional subscheme has {\it hidden disconnectivity}.
}\end{definition}

\bigskip

\noindent
The phenomenon of hidden disconnectivity can occur for noncommutative points
   --- for example, Azumaya/matrix points, upper-triangular matrix points, lower-triangular matrix points ---
  on $\nc\mathbf{A}^n$ whether they are punctual or not.
(Cf.\ [L-Y1] (D(1)) for hidden disconnectivity of Azumaya schemes via the notion of surrogates.)

\bigskip

Any ${\Bbb C}$-algebra homomorphism
   $$
    u^\sharp\;:\; {\Bbb C}[t]\; \longrightarrow\; {\Bbb C}\langle z_1,\,\cdots\,,\, z_n\rangle
   $$
   with $u^\sharp(t)\not\in {\Bbb C}$ is a monomorphism and, by definition/tautology,
  induces a dominant morphism
   $$
     u\;:\; \nc\mathbf{A}^n\; \longrightarrow\; \mathbf{A}^1\,.
   $$
In this way,
 a $0$-dimensional subscheme
   ${\Bbb C}\langle z_1,\,\cdots\,,\,z_n\rangle \rightaarrow A$ on $\nc\mathbf{A}^n$
 is mapped to a $0$-dimensional, now commutative, subscheme on the affine line $\mathbf{A}^1$.
One may use such projections to get a sense as to how the $0$-dimensional subscheme sits in $\nc\mathbf{A}^n$
 and some of its properties.
(Cf.~Radom transformation in analysis.)
 
\bigskip

\begin{example}
{\bf [hidden disconnectivity of Azumaya/matrix point on $\nc\mathbf{A}^n$
          manifested via projection to $\mathbf{A}^1$]}\; {\rm
 Recall the matrix point on $\nc\mathbf{A}^{r^2}$ in Example~2.1.7 (3).
 Consider
  the projection $u:\nc\mathbf{A}^{r^2}\rightarrow \mathbf{A}^1$
  specified by the ${\Bbb C}$-algebra monomorphism
  $$
   u^\sharp\;:\; {\Bbb C}[t]\; \longrightarrow\; {\Bbb C}\langle z_{ij}\,|\, 1\le i, j\le r\rangle,\,\hspace{1em}
     t\;\longmapsto\; z_{i^\prime i^\prime}\:\:\mbox{for some $i^\prime$;}
  $$
 and let $\hat{u}^\sharp$ be a ${\Bbb C}$-algebra homomorphism from the composition
    $$
	 \xymatrix{
	  {\Bbb C}[t] \ar[r]_-{u^\sharp} \ar@/^1.4pc/[rr]^-{\hat{u}^\sharp}
	       & {\Bbb C}\langle z_{ij}\,|\, 1\le i, j\le r\rangle \ar[r]
		   &  M_r({\Bbb C})
	 }
	$$
    induced by the given matrix point on $\nc\mathbf{A}^{r^2}$.
  Then the matrix point, though connected, is projected under $u$ to two distinct closed points $\{p=0\,,\, p=1\}$
    on $\mathbf{A}^1$,
    described by the ideal $(t(t-1))$,
      	i.e.\ the kernel $\Ker(\hat{u}^\sharp)$ of $\hat{u}^\sharp$, of ${\Bbb C}[t]$.
  In this way, $u$ reveals the hidden disconnectivity of the given matrix point on $\nc\mathbf{A}^{r^2}$.
  As a (left) ${\Bbb C}[t]$-module through $\hat{u}^\sharp$,
   $M_r({\Bbb C})$ is pushed forward under $u$ to
   a torsion ${\cal O}_{\mathbf{A}^1}$-module on $\mathbf{A}^1$,
   with fiber-dimension $r^2-r$ at $p=0$ and $r$ at $p=1$.
  
 For reference, if one instead considers the projection
   $v:\nc\mathbf{A}^{r^2}\rightarrow \mathbf{A}^1$
  specified by the ${\Bbb C}$-algebra monomorphism
  $v^\sharp:{\Bbb C}[t]\rightarrow {\Bbb C}\langle z_{ij}\,|\, 1\le i, j\le r\rangle$
    with $t\mapsto z_{i^\prime j^\prime}$ for some $i^\prime\ne j^\prime$.
 Then the given matrix point is projected to a nonreduced point on $\mathbf{A}^1$
  associated to the ideal $(t^2)$ of ${\Bbb C}[t]$.
}\end{example}

%
%
%
%

\bigskip

\begin{flushleft}
{\bf $\nc\mathbf{A}^n$ as a smooth noncommutative affine toric scheme}
\end{flushleft}
Let
 $N$ be a lattice isomorphic to ${\Bbb Z}^n$, with a fixed basis $(e_1,\,\cdots\,,\, e_n)$,   and
 $M = \Hom_{\Bbb Z}(N,{\Bbb Z})$ be the dual lattice,
   with the evaluation pairing denoted by $\langle \,,\, \rangle: M\times N \rightarrow {\Bbb Z}$ and
    the dual basis $(e_1^\ast,\,\cdots\,,\, e_n^\ast)$.
Denote the associated ${\Bbb R}$-vector spaces
 $N_{\Bbb R}:= N\otimes_{\Bbb Z}{\Bbb R}$ and $M_{\Bbb R}:= M\otimes_{\Bbb Z}{\Bbb R}$.

Let $\sigma$ be the cone in $N_{\Bbb R}$ generated by $e_1,\,\cdots\,e_n$.
Then
   the commutative monoid $M_\sigma:= \sigma^\vee\cap M$ is generated by $\{e_1^\ast,\,\cdots\,,\,e_n^\ast\}$.
Let $\breve{M}$  be the  associated noncommutative monoid from edge-paths at $0$, cf.\ Definition~1.4.
Then the correspondence $e_i^\ast\mapsto z_i$, $i=1,\,\ldots\,, n$,
 specifies a ${\Bbb C}$ -algebra isomorphism
 from the monoid algebra ${\Bbb C}\langle \breve{M} _\sigma \rangle$
 to the function ring ${\Bbb C}\langle z_1,\,\cdots\,,\, z_n \rangle$ of $\nc\mathbf{A}^n$.
Under the built-in monoid homomorphism $\pi: \breve{M}_\sigma\rightarrow M_\sigma$,
 this isomorphism descends to
 an isomorphism ${\Bbb C}[M_\sigma]\rightarrow {\Bbb C}[z_1,\,\cdots\,,\, z_n]$
 that gives the standard realization of $\mathbf{A}^n$ as a toric variety.
Furthermore,
since the multiplicative group ${\Bbb C}^\times:={\Bbb C}-\{0\}$ in ${\Bbb C}$
  lies in the center of ${\Bbb C}\langle \breve{M}_\sigma \rangle$,
 the $({\Bbb C}^\times)^n$-action on ${\Bbb C}[M_\sigma]$ generated by
  $(z_1,\,\cdots,\,z_n)\mapsto (t_1z_1,\,\cdots\,,\,t_nz_n)$
   for $(t_1,\,\cdots\,,\, t_n)\in ({\Bbb C}^\times)^n$
  lifts canonically to a $({\Bbb C}^\times)^n$-action on ${\Bbb C}\langle \breve{M}_\sigma \rangle$
 that makes $\pi$ $({\Bbb C}^\times)^n$-equivariant.
  
\bigskip

\begin{definition} {\bf [$\nc\mathbf{A}^n$ as noncommutative affine toric scheme]}\; {\rm
 We shall call the above construction
 the realization of $\nc\mathbf{A}^n$ as a smooth {\it noncommutative affine toric scheme} over ${\Bbb C}$,
 associated to the cone $\sigma\subset N_{\Bbb R}$.
}\end{definition}
 
\bigskip

\noindent
By construction,
the restriction of the toric scheme structure on $\nc\mathbf{A}^n$ to the master commutative subscheme
 $\mathbf{A}^n\hookrightarrow \nc\mathbf{A}^n$
 recovers the ordinary realization of $\mathbf{A}^n$ as a toric variety.

\bigskip

\subsection{Soft noncommutative toric schemes associated to a fan}

Recall
 the lattice $N\simeq {\Bbb Z}^n$, with a fixed basis $(e_1,\,\cdots\,,\, e_n)$,
 the dual lattice $M = \Hom_{\Bbb Z}(N,{\Bbb Z})$,
 the evaluation pairing denoted by $\langle \,,\, \rangle: M\times N \rightarrow {\Bbb Z}$,
 the dual basis $(e_1^\ast,\,\cdots\,,\, e_n^\ast)$ of $M$, and
 the associated ${\Bbb R}$-vector spaces  $N_{\Bbb R}:= N\otimes_{\Bbb Z}{\Bbb R}$
    and $M_{\Bbb R}:= M\otimes_{\Bbb Z}{\Bbb R}$.

Let $\Delta$ be a {\it fan} in $N$.
(That is, $\Delta$ is a set of {\it rational strongly convex polyhedral cones} $\sigma$ in $N_{\Bbb R}$
    such that (1) Each {\it face} of a cone in $\Delta$ is also a cone in $\Delta$;
	  (2) The intersection of two cones in $\Delta$ is a face of each; cf. [Fu: Sec.\ 1.4].)
For $\sigma, \tau\in\Delta$,
 denote  $\tau\preccurlyeq\sigma$ or $\sigma\succcurlyeq\tau$  if $\tau$ is a face of $\sigma$; and
 denote  $\tau\prec\sigma$ or $\sigma\succ\tau$  if $\tau$ is a face of $\sigma$ and $\tau\ne \sigma$.
	
\bigskip

\begin{definition} {\bf [$\Delta$-system, inverse $\Delta$-system of objects in a category]}\; {\rm
 Let ${\cal C}$ be a category,
   with the set of object denoted $\Object{\cal C}$ and
          the set of morphisms of objects denoted $\Morphism{\cal C}$.
 Then a {\it $\Delta$-system}  (resp.\ {\it inverse $\Delta$-system}) {\it of objects in ${\cal C}$}
  is a correspondence $F: \Delta \rightarrow \ObjectCategory{\cal C}$ and a choice of morphisms
  $h_{\tau\sigma}:F(\tau)\rightarrow F(\sigma)$
  (resp.\ $h_{\sigma\tau}: F(\sigma)\rightarrow F(\tau)$)
   for all pairs of $(\sigma, \tau) \in\Delta\times \Delta$ with $\tau\prec\sigma$
  such that
   $h_{\tau\sigma}\circ h_{\rho\tau}= h_{\rho\sigma}$
   (resp.\ $h_{\tau\rho}\circ h_{\sigma\tau}=h_{\sigma\rho}$)
   for all $\rho\prec\tau\prec\sigma$.
}\end{definition}

\bigskip
	  	 	
Recall how a variety $Y(\Delta)$ can be associated to $\Delta$, (e.g.\ [Fu])
 as an inverse $\Delta$-system of monoid algebras or a $\Delta$-system of affine schemes:
Associated to each cone $\sigma\in\Delta$ is a commutative monoid
 $$
   M_\sigma\; :=\; \sigma^\vee\cap M\;
    =\; \{u\in M\,|\, \langle u, v\rangle \ge 0\:\:\mbox{for all $v\in\sigma$}  \}\,,.
 $$
By construction,
 there is a built-in monoid inclusion  $M_\sigma \hookrightarrow M_\tau$ for $\tau\prec\sigma$.
This gives rise to a $\Delta$-system of affine schemes
 $$
   \{\,U_\sigma := \Spec({\Bbb C}[M_\sigma])\,\}_{\sigma\in\Delta}\,,
 $$
 where ${\Bbb C}[M_\sigma]$ is the monoid algebra over ${\Bbb C}$ determined by $M_\sigma$,
that glue to a {\it toric variety} $Y(\Delta)$
through the system of inclusions of Zariski open sets
 $$
   \iota_{\tau\sigma}\; :\; U_\tau\; \hookrightarrow \; U_\sigma\,,
     \hspace{2em}\mbox{for all $\tau\prec\sigma\in\Delta$}\,,
 $$
 induced from the monoid inclusion $M_\sigma\hookrightarrow M_\tau$,
 that by construction satisfy the gluing conditions:
  $$
    \iota_{\tau\sigma} \circ \iota_{\rho\tau}\;=\; \iota_{\rho\sigma}\,,\hspace{2em}
     \mbox{for all $\rho \prec \tau \prec \sigma\in\Delta$}\,.
  $$
The built-in $({\Bbb C}^\times)^n$-action on $U_\sigma$ and
 the built-in torus embedding
  $$
   {\Bbb T}^n\; :=\; {\Bbb T}^n_{\Bbb C}\; :=\;  \Spec({\Bbb C}[M])\;
      \hookrightarrow\;  U_\sigma
  $$
  for each $\sigma\in\Delta$ glues to
 a $({\Bbb C}^\times)^n$-action on $Y(\Delta)$ and
 a torus embedding ${\Bbb T}^n \hookrightarrow Y(\Delta)$.
	  		 	  		
In this section, we shall construct
 a class of noncommutative spaces, named {\it soft noncommutative toric schemes}, associated to $\Delta$
 by constructing an inverse $\Delta$-system $\{\breve{M}_\sigma\}_{\sigma\in\Delta}$ of submonoids in
 $\breve{M}:= \langle e_1^\ast, {e_1^\ast}^{-1},\,\cdots\,,\, e_n^\ast, {e_n^\ast}^{-1}  \rangle$
 via lifting the above construction to the edge-path monoids of commutative monoids with generators
    and propose {\it soft gluings}
	to bypass the generally unresolvable issue of localizations of noncommutative rings when trying to gluing.
  
\bigskip

\begin{assumption} $[$on fan $\Delta$$]$\; {\rm
 Recall Sec.~2.1, theme: {\sl $\nc\mathbf{A}^n$ as a smooth noncommutative affine toric scheme}.
 To ensure that we have at least one uncomplicated chart in the atlas to begin with
   for the noncommutative toric scheme to be constructed, we shall assume that
   \begin{itemize}
    \item[$\cdot$]
    {\it The cone generated by $e_1,\,\cdots\,,\, e_n$ is in $\Delta$.}
   \end{itemize}
 The corresponding chart $\simeq \nc\mathbf{A}^n$ is called the {\it reference toric chart},
 cf.~Definition~2.2.13.
 Furthermore, to ensure that we have enough good fundamental charts in the atlas before gluings,
  we shall assume in addition that
 \begin{itemize}
  \item[$\cdot$]
  {\it All the maximal cones in $\Delta$ are $n$-dimensional, simplicial, and of index $1$.}
 \end{itemize}			
In particular, $Y(\Delta)$ is smooth.
}\end{assumption}
 
\medskip

\begin{remark} $[$other noncommutative toric variety$\,]$\; {\rm
 Based on various aspects of toric varieties and the language used,
 there are other notions and constructions of ``{\it noncommutative toric varieties}";
  for example, the work [K-L-M-V1], [K-L-M-V2] of
  {\it Ludmil Katzarkov}, {\it Ernesto Lupercio}, {\it Laurent Meersseman}, and {\it Alberto Verjovsky}.
 It is not clear if there is any connection among them.
 See Introduction for our motivation from D-branes in string theory.
}\end{remark}

\bigskip

\begin{flushleft}
{\bf Inverse $\Delta$-systems of $\Delta$-admissible submonoids of $\breve{M}$}
\end{flushleft}
Recall from Example~1.7
 the edge-path monoid
  $\breve{M}:= \langle e_1^\ast, {e_1^\ast}^{-1},\,\cdots\,,\, e_n^\ast, {e_n^\ast}^{-1}  \rangle$
  (cf.\ slightly different notation here) and
 the built-in monoid epimorphisms
   $\pi:  \breve{M}\rightarrow M$,
   $\pi:{\Bbb C}\cdot\breve{M}\rightarrow {\Bbb C}\cdot M$     and
 monoid-algebra epimorphism $\pi:{\Bbb C}\langle\breve{M}\rangle\rightarrow {\Bbb C}[M]$
 via commutatization.
 
\bigskip

\begin{definition} {\bf [submonoid of $\breve{M}$ admissible to cone in $\Delta$]}\; {\rm
 Let $\sigma\in \Delta$.
 Then a submonoid $\breve{M}^\prime$ of $\breve{M}$ is called {\it admissible to $\sigma$}
   if it satisfies:
   (0) $\breve{M}$ is finitely generated.
   (1) The monoid epimorphism $\pi:\breve{M}\rightarrow M$ induces a monoid epimorphism
              $\breve{M}^\prime \rightarrow M_\sigma$ by restriction.
   (2) An $\breve{m}\in \breve{M}^\prime$ is invertible in $\breve{M}^\prime$
               if $\pi(\breve{m})$ is invertible in $M_\sigma$.
}\end{definition}

\medskip

\begin{theorem} {\bf [existence of inverse $\Delta$-system of admissible submonoids of $\breve{M}$]}\;
 There exists an inverse $\Delta$-system
  $\{\breve{M}_\sigma\}_{\sigma\in\Delta}$ of submonoids of $\breve{M}$
  such that $\breve{M}_\sigma$ is admissible to $\sigma$ for all $\sigma\in\Delta$.
 {\rm We shall call such a system an} inverse
   $\Delta$-system of admissible submonoids of $\breve{M}$.
\end{theorem}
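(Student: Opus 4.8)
The plan is to construct the $\breve{M}_\sigma$ directly, cone by cone, using the combinatorics of the fan together with the fact that every maximal cone is $n$-dimensional, simplicial, and of index $1$ (Assumption~2.2.4). First I would fix, for each maximal cone $\sigma\in\Delta$, the unique primitive generators $v_{\sigma,1},\dots,v_{\sigma,n}$ of its rays; since $\sigma$ is simplicial of index $1$, these form a $\Bbb Z$-basis of $N$, and the dual basis $u_{\sigma,1},\dots,u_{\sigma,n}\in M$ generates the commutative monoid $M_\sigma=\sigma^\vee\cap M$. Lift each $u_{\sigma,i}$ to a word $\breve{u}_{\sigma,i}\in\breve M$ over the alphabet $\{e_1^\ast,{e_1^\ast}^{-1},\dots,e_n^\ast,{e_n^\ast}^{-1}\}$ by writing $u_{\sigma,i}=\sum_j c_{ij}e_j^\ast$ in the original coordinates and taking the corresponding reduced word; these words need not be unique, but one fixes a choice once and for all. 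Then define $\breve{M}_\sigma\subset\breve M$ to be the submonoid generated by the $\breve{u}_{\sigma,i}$ together with the inverses $\breve{u}_{\sigma,i}^{-1}$ for those $i$ with $u_{\sigma,i}$ invertible in $M_\sigma$ (i.e. $-u_{\sigma,i}\in M_\sigma$, equivalently the ray $v_{\sigma,i}$ is ``two-sided'' in $\sigma$). For a non-maximal $\tau\in\Delta$, choose a maximal cone $\sigma\succcurlyeq\tau$ and set $\breve{M}_\tau$ to be the submonoid of $\breve M$ generated by $\breve{M}_\sigma$ together with $\breve{u}^{-1}$ for a word $\breve u$ lifting each of the finitely many new invertible generators of $M_\tau$ over $M_\sigma$ (the generators dual to the rays of $\sigma$ not contained in $\tau$); one must then check this is independent of the auxiliary maximal cone, or equivalently take the submonoid generated by the union over all $\sigma\succcurlyeq\tau$.

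Next I would verify the three admissibility conditions of Definition~2.2.7 for each $\breve{M}_\sigma$. Condition~(0), finite generation, is immediate from the construction since each $M_\sigma$ is finitely generated (Gordan's lemma) and we took finitely many lifts and inverses. Condition~(1), that $\pi$ restricts to an epimorphism $\breve{M}_\sigma\to M_\sigma$: by construction $\pi(\breve{u}_{\sigma,i})=u_{\sigma,i}$ and these generate $M_\sigma$, so surjectivity holds; one should also note the image lands in $M_\sigma$ precisely because each $u_{\sigma,i}\in\sigma^\vee$ and $M_\sigma$ is a submonoid. Condition~(2) is the delicate one: I need that $\breve m\in\breve{M}_\sigma$ is invertible in $\breve{M}_\sigma$ iff $\pi(\breve m)$ is invertible in $M_\sigma$. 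The ``only if'' direction follows from $\pi$ being a monoid homomorphism. For ``if'': write $\breve m$ as a word in the chosen generators $\breve{u}_{\sigma,i}^{\pm}$; its image $\pi(\breve m)=\sum_i a_i u_{\sigma,i}$ with $a_i\in\Bbb Z$, and invertibility of $\pi(\breve m)$ in $M_\sigma$ — which, since $\{u_{\sigma,i}\}$ is a $\Bbb Z$-basis of $M$ and $M_\sigma$ is the submonoid $\{\sum a_i u_{\sigma,i}: a_i\ge 0 \text{ whenever } u_{\sigma,i}\text{ not invertible}\}$ — forces $a_i=0$ for every $i$ with $u_{\sigma,i}$ non-invertible. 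One then argues that, because $\breve{M}_\sigma$ is generated with its invertible generators closed under inverses, any word mapping into the sub-*group* of $M_\sigma$ spanned by the invertible $u_{\sigma,i}$ can itself be rewritten (using only $\breve{u}_{\sigma,i}\breve{u}_{\sigma,i}^{-1}=\breve{u}_{\sigma,i}^{-1}\breve{u}_{\sigma,i}=1$) to lie in the subgroup of $\breve{M}_\sigma$ generated by those $\breve{u}_{\sigma,i}^{\pm}$, hence is invertible. This is the step I expect to be the main obstacle, since it is really a statement that the edge-path lift does not create ``spurious non-invertibility'' — it should reduce to a normal-form / cancellation argument in the free-inverse-monoid quotient, but care is needed because the noncommutative word can wander away from the subgroup before returning.

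Finally I would check that the $\breve{M}_\sigma$ assemble into an inverse $\Delta$-system, i.e. exhibit monoid homomorphisms $h_{\sigma\tau}:\breve{M}_\sigma\to\breve{M}_\tau$ for $\tau\prec\sigma$ with $h_{\tau\rho}\circ h_{\sigma\tau}=h_{\sigma\rho}$ whenever $\rho\prec\tau\prec\sigma$. By the construction above, $\breve{M}_\sigma$ is literally a submonoid of $\breve{M}_\tau$ (we only \emph{added} inverting generators in passing from $\sigma$ to its face $\tau$), so $h_{\sigma\tau}$ is simply the inclusion $\breve{M}_\sigma\hookrightarrow\breve{M}_\tau$; the cocycle condition $h_{\tau\rho}\circ h_{\sigma\tau}=h_{\sigma\rho}$ is then the tautology that a composition of inclusions is the inclusion. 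This makes the system manifestly compatible with the commutative inverse $\Delta$-system $\{M_\sigma\hookrightarrow M_\tau\}$ via $\pi$, which is a sanity check worth recording. The one point requiring attention here is well-definedness of $\breve{M}_\tau$ for non-maximal $\tau$ independently of the maximal cone chosen to seed it; taking $\breve{M}_\tau$ to be generated by $\bigcup_{\sigma\succcurlyeq\tau}\breve{M}_\sigma$ together with lifts of the extra invertible generators of $M_\tau$ removes this choice, at the (harmless) cost of possibly enlarging the generating set, and one re-runs the admissibility check — conditions (0) and (1) are unaffected and (2) again follows from the cancellation argument, since $M_\tau$ still has the form ``$\Bbb Z$-basis with a prescribed subset of coordinates required non-negative''.
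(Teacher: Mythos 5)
Your architecture matches the paper's: lift the $n$ generators of each maximal $M_\sigma$ to words $\breve u_{\sigma,i}\in\breve M$, take $\breve M_\sigma$ to be the submonoid they generate, and for a face $\tau$ take the submonoid generated by all $\breve M_\sigma$ with $\sigma\succcurlyeq\tau$ together with inverses of lifts of elements of $\tau^\perp\cap M$; the inclusions then give the inverse $\Delta$-system and the cocycle condition for free, exactly as you say. But the step you yourself flag as ``the main obstacle'' --- verifying Condition~(2) of admissibility, i.e.\ that any $\breve m\in\breve M_\tau$ with $\pi(\breve m)\in\tau^\perp\cap M$ is actually a unit of $\breve M_\tau$ --- is precisely where the proposal stops. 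The tool you reach for, a cancellation/normal-form argument ``in the free-inverse-monoid quotient,'' does not cleanly apply here: once $\tau$ lies in several maximal cones, the generating set $\{u_{\sigma,i}:\sigma\succcurlyeq\tau,\ i=1,\dots,n\}$ of $M_\tau$ is a redundant spanning set, not a ${\Bbb Z}$-basis, so your coordinate-counting step (``$a_i=0$ for non-invertible $i$'') loses its grip, and there is no obvious normal form to appeal to in the lifted word.

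The paper resolves this by sidestepping the noncommutativity entirely with a commutative convexity observation, isolated as its finite-generatedness lemma: $\tau^\perp$ is the minimal face (lineality space) of the convex cone $\tau^\vee$, so the face property gives that if $m_1,m_2\in M_\tau=\tau^\vee\cap M$ with $m_1+m_2\in\tau^\perp\cap M$ then both $m_1,m_2\in\tau^\perp\cap M$. Every generator $g_j$ used to build $\breve M_\tau$ satisfies $\pi(g_j)\in M_\tau$; applying the face property inductively to $\pi(\breve m)=\sum_j\pi(g_j)\in\tau^\perp$ forces $\pi(g_j)\in\tau^\perp$ for \emph{every} letter $g_j$ appearing in $\breve m$, so each $g_j$ is one of the generators whose inverse was already adjoined, and $\breve m$ is a product of units --- no rewriting needed. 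The same lemma delivers the finite generation you need for Condition~(0) of the adjoined group part. I would encourage you to state and prove this convexity lemma explicitly; it is the real content behind Condition~(2) and the reason the construction can be carried out face by face without confronting normal forms in $\breve M$ at all.
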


\begin{proof}
 We proceed in three steps.
 
 \bigskip

 \noindent
{\it Step $(a):$}\: {\it Submonoids of $\breve{M}$ admissible to maximal cones in $\Delta$}

 \medskip

 \noindent
 Let $\sigma$ be a maximal cone in $\Delta$.
 By assumption, $\sigma$ is $n$-dimensional, simplicial, and of index $1$.
 Thus, $\sigma^\vee$ is $n$-dimensional, simplicial, of index $1$ in $M_{\Bbb R}$
   and the generators $u_1,\,\cdots\,,\, u_n$ of the monoid $\sigma^\vee\cap M$ generates $M$ as well.
 
 \bigskip

 \begin{lemma} {\bf [submonoid admissible to maximal cone]}\;
  Let $\breve{u}_i\in \pi^{-1}(u_i)\subset \breve{M}$, for $i=1,\,\ldots\,,\,n$.
  Then the submonoid
    $\langle \breve{u},\,\cdots\,,\,\breve{u}_n \rangle \subset \breve{M}$
   is monoid-isomorphic to the free associative monoid of $n$ letters $\langle z_1,\,\cdots\,,\,z_n \rangle$.
 \end{lemma}

 \medskip
 
 \noindent
{\it Proof.}\:
 Recall the basis $(e_1^\ast,\,\cdots\,,\, e_n^\ast)$ for $M$.
 Since $(u_1,\,\cdots\,,\, u_n)$ generates $M$ as well, up to a relabelling of indices one may assume that
  $u_i = \sum_{j=1}^n a_{ij}e_j^\ast$ with the coefficient $a_{ii}\ne 0$, for $i=1\,\ldots\,,\,n$.
 Since $(\breve{u}_1,\,\cdots\,,\, \breve{u}_n)$ projects to $(u_1,\,\cdots\,,\, u_n)$ under $\pi$,
  as a word in $2n$ letters $e_1^\ast, {e_1^\ast}^{-1},\,\cdots\,,\,e_n^\ast, {e_n^\ast}^{-1}$,
  $\breve{u}_i$ must contain a letter
   $z_i^\prime:=e_i^\ast$, if $a_{ii}>0$, or ${e_i^\ast}^{-1}$, if $a_{ii}<0$.
 The correspondences
  $\breve{u}_i\mapsto z_i^\prime \mapsto z_i$ induce monoid-isomorphisms
  $\langle \breve{u}_1,\,\cdots\,,\, \breve{u}_n \rangle  \stackrel{\sim}{\longrightarrow}
    \langle z_1^\prime,\,\cdots\,,\, z_n^\prime \rangle  \stackrel{\sim}{\longrightarrow}
    \langle z_1,\,\cdots\,,\, z_n\rangle$.

 \hspace{38.2em}$\Box$

 \bigskip
 
 By construction, $\langle \breve{u}_1,\,\cdots\,,\, \breve{u}_n \rangle$
   is admissible to the maximal cone $\sigma$ and
 we may take\\   $\breve{M}_\sigma = \langle \breve{u}_1,\,\cdots\,,\, \breve{u}_n \rangle$.

 \bigskip

 \noindent
 {\it Step $(b):$}\: {\it Submonoids of $\breve{M}$ admissible to lower-dimensional cones in $\Delta$}

 \medskip

 \noindent
 For each maximal cone $\sigma\in\Delta$, fix a submonoid $\breve{M}_\sigma\subset \breve{M}$
   admissible to $\sigma$ as constructed in Step (a).
 For a cone $\tau\in\Delta$ of dimension $<n$, let
  $\Delta_\tau:= \{\sigma\,|\,\tau\prec \sigma,\,\mbox{$\sigma$ maximal cone}\}\subset \Delta$
 and consider the submonoid of $\breve{M}$ generated by all $\breve{M}_\sigma$,
   where $\sigma$ is a maximal cone in $\Delta$ that contains $\tau$:
  $$
   \breve{M}_\tau^\prime\;
    :=\; \langle \breve{M}_\sigma \,|\, \sigma\in\Delta_\tau \rangle\; \subset\; \breve{M}\,.
  $$
 Since each $\sigma\in\Delta_\tau$ is strongly convex,
  the submonoid $M_\tau := \tau^\vee\cap M$ of $M$ is generated by
   $M_\gamma$, $\gamma\in \Delta_\tau$.
 It follows that the built-in monoid epimorphism restricts to a monoid epimorphism
  $\pi:\breve{M}_\tau^\prime \rightarrow M_\tau$.
 The submonoid of $M_\tau$ that consists of invertible elements in $M_\tau$ is $\tau^\perp\cap M$.

 \bigskip

 \begin{lemma} {\bf [finite generatedness]}\;
  The submonoid $\pi^{-1}(\tau^\perp\cap M)\cap \breve{M}_\tau^\prime$ of $\breve{M}_\tau^\prime$
  is finitely generated.
 \end{lemma}

 \medskip

 \noindent
 {\it Proof.}\:\:
  For $\sigma$ a maximal cone in $\Delta$, let
    $u_{\sigma, 1},\,\cdots\,,\,u_{\sigma, n}$ be the generators of $M_\sigma$  and
    $\breve{u}_{\sigma, 1},\,\cdots\,,\,\breve{u}_{\sigma, n}$ be the generators of the monoid
      $\breve{M}_{\sigma}$ with $\pi(\breve{u}_{\sigma, i})=u_{\sigma, i}$.
  Then, by construction,\\
   $\{\breve{u}_{\sigma, i}\,|\, \sigma\in\Delta_\tau\,,\, i=1,\,\ldots\,,\,n \}\subset \breve{M}$
   generates the submonoid $\breve{M}_\tau^\prime$ of $\breve{M}$.
  Since $\tau^\perp\subset \partial \tau^\vee:= \cup_{\rho\prec\tau}\rho$  and
   $\tau^\vee = \sum_{\sigma\in\Delta_\tau}\sigma^\vee$ is convex
   with $\tau^\perp$ a linear stratum of $\partial\tau^\vee$,
  if $m_1, m_2\in M_\tau$ satisfy the condition $m_1+m_2\in\tau^\perp\cap M$,
    then both $m_1$ and $m_2$ must be in $\tau^\perp\cap M$.
  If follows that
   the submonoid
    $\pi^{-1}(\tau^\perp\cap M)\cap \breve{M}_\tau^\prime$ of $\breve{M}_\tau^\prime$
  is generated by
   $$
    \{\breve{u}_{\sigma, i}\,|\, \sigma\in\Delta_\tau\,,\, i=1,\,\ldots\,,\,n\,;\,
             u_{\sigma, i}\in \tau^\perp\}\,.
   $$
   
 \hspace{38.2em}$\Box$
 
 \bigskip

 Let
  $$
   \breve{S}_{\tau}\;:=\;
     \langle  	
 	  \breve{u}_{\sigma, i}\,,
	  \breve{u}_{\sigma, i}^{\:\:-1}\,|\, \sigma\in\Delta_\tau\,,\, i=1,\,\ldots\,,\,n\,;\,
             u_{\sigma, i}\in \tau^\perp	
	 \rangle
  $$
  and augment $\breve{M}_\tau^\prime$ to
  $$
    \breve{M}_\tau\;:=\; \langle \breve{M}_\tau^\prime\,,\, \breve{S}_{\tau} \rangle\;
	 \subset\; \breve{M}\,.
  $$
 Then, $\breve{M}_\tau$ is now a submonoid of $\breve{M}$ admissible to $\tau\in\Delta$.
 By construction, note that
  the submonoid $\breve{M}_\tau^\ast$ of $\breve{M}_\tau$
    that consists of all the invertible elements of the monoid $\breve{M}_\tau$
    coincides with $\breve{S}_\tau$;
  and that
   $\breve{M}_{\mathbf{0}}=\breve{M}$.
 
 \bigskip

 \noindent
 {\it Step $(c)$.}\: {\it The inverse $\Delta$-system $\{\breve{M}_\sigma\}_{\sigma\in\Delta}$}

 \medskip

 \noindent
 Consider
  the category of submonoids of $\breve{M}$
     with morphisms of objects given by inclusions of submonoids of $\breve{M}$    and
  the collection $\{\breve{M}_\sigma\}_{\sigma\in\Delta}$ of submonoids of $\breve{M}$
     that are admissible to cones in $\Delta$ constructed in Steps (a) and (b).
 For $\tau\prec\sigma$, $\Delta_\tau\supset \Delta_\sigma$ and hence
   $\breve{M}_\sigma\subset \breve{M}_\tau$.
 For $\rho\prec\tau\prec\sigma$,
   $\breve{M}_\sigma \subset \breve{M}_\tau \subset \breve{M}_\rho$ naturally.
 This shows that
  the collection $\{\breve{M}_\sigma\}_{\sigma\in\Delta}$
  is an inverse $\Delta$-system of submonoids of $\breve{M}$
  and concludes the proof of the theorem.

\end{proof}

\medskip

\begin{definition} {\bf [augmentation and diminishment]}\; {\rm
 Let $\{\breve{M}_\sigma\}_{\sigma\in\Delta}$    and
    $\{\breve{M}^\prime_\sigma\}_{\sigma\in\Delta}$
  be two inverse $\Delta$-systems of $\Delta$-admissible submonoids of $\breve{M}$
  such that $\breve{M}_\sigma \subset \breve{M}^\prime_\sigma$ for all $\sigma\in\Delta$.
 Then we say that
  $\{\breve{M}_\sigma\}_{\sigma\in\Delta}$ is a {\it diminishment} of
    $\{\breve{M}^\prime_\sigma\}_{\sigma\in\Delta}$    and
  $\{\breve{M}^\prime_\sigma\}_{\sigma\in\Delta}$ is an {\it augmentation} of
    $\{\breve{M}_\sigma\}_{\sigma\in\Delta}$.  	
 In notation,
   $\{\breve{M}_\sigma\}_{\sigma\in\Delta}
     \preccurlyeq  \{\breve{M}^\prime_\sigma\}_{\sigma\in\Delta}$	and
  $\{\breve{M}^\prime_\sigma\}_{\sigma\in\Delta}
    \succcurlyeq \{\breve{M}_\sigma\}_{\sigma\in\Delta}$.
}\end{definition}

\bigskip

Similar arguments to the proof of Theorem~2.2.5 give the following:

\bigskip

\begin{proposition} {\bf [completion to inverse $\Delta$-system]}\;
 Let $\Delta(n)$ be the set of $n$-dimensional cones in $\Delta$.
 Given $\{\breve{M}^\prime_\sigma\}_{\sigma\in\Delta(n)}$
     such that $\breve{M}^\prime_\sigma$ is a submonoid of $\breve{M}$ admissible to $\sigma$,
 there exists an inverse $\Delta$-system $\{\breve{M}_\tau\}_{\tau\in\Delta}$
  of admissible submonoids of $\breve{M}$
  such that $\breve{M}_\sigma = \breve{M}^\prime_\sigma$ for $\sigma\in\Delta(n)$.
 {\rm We shall call $\{\breve{M}_\tau\}_{\tau\in\Delta}$
  a {\it completion} of $\{\breve{M}^\prime_\sigma\}_{\sigma\in\Delta(n)}$
  to an inverse $\Delta$-system of admissible submonoids of $\breve{M}$.
    }
\end{proposition}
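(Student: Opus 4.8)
The plan is to re-run Steps $(b)$ and $(c)$ of the proof of Theorem~2.2.5 essentially verbatim, this time feeding in the prescribed data on top-dimensional cones in place of the submonoids manufactured there in Step $(a)$. First observe that, under Assumption~2.2.2, every cone of $\Delta$ has dimension $\le n$ and every maximal cone has dimension exactly $n$; hence $\Delta(n)$ is precisely the set of maximal cones of $\Delta$. So it is legitimate to declare $\breve{M}_\sigma := \breve{M}^\prime_\sigma$ for every $\sigma\in\Delta(n)$: each of these is, by hypothesis, a finitely generated submonoid of $\breve{M}$ admissible to $\sigma$, which is exactly the output that Step $(a)$ of Theorem~2.2.5 produces and the only property of those submonoids that the rest of that proof uses. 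Fix once and for all a finite generating set $G_\sigma$ of $\breve{M}_\sigma$ for each $\sigma\in\Delta(n)$.

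For $\tau\in\Delta$ with $\dim\tau<n$, set $\Delta_\tau := \{\sigma\in\Delta(n)\,|\,\tau\prec\sigma\}$ and $\breve{M}_\tau^\prime := \langle\breve{M}_\sigma\,|\,\sigma\in\Delta_\tau\rangle\subset\breve{M}$, generated by $G_\tau := \bigcup_{\sigma\in\Delta_\tau}G_\sigma$. As in Step $(b)$, the standard toric fact that $M_\tau=\tau^\vee\cap M$ is generated by $\{M_\sigma\,|\,\sigma\in\Delta_\tau\}$ shows $\pi$ restricts to an epimorphism $\breve{M}_\tau^\prime\to M_\tau$. The finite-generatedness statement of Lemma~2.2.7 goes through with $G_\tau$ in place of the distinguished lifts $\breve{u}_{\sigma,i}$: if $\breve{m}=g_1\cdots g_k$ with $g_j\in G_\tau$ has $\pi(\breve{m})\in\tau^\perp\cap M$, then because each $\pi(g_j)$ lies in the cone $\tau^\vee$ and $\tau^\perp$ is a linear stratum of $\partial\tau^\vee$, every $\pi(g_j)$ already lies in $\tau^\perp$; hence $\pi^{-1}(\tau^\perp\cap M)\cap\breve{M}_\tau^\prime$ is generated by the finite set $G_\tau^\perp:=\{g\in G_\tau\,|\,\pi(g)\in\tau^\perp\}$. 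Now augment, exactly as in Step $(b)$, by $\breve{S}_\tau := \langle g,g^{-1}\,|\,g\in G_\tau^\perp\rangle$ and put $\breve{M}_\tau := \langle\breve{M}_\tau^\prime,\breve{S}_\tau\rangle$. The same bookkeeping then shows $\breve{M}_\tau$ is admissible to $\tau$: it is finitely generated; $\pi$ still maps it onto $M_\tau$, since the new generators map into the linear subspace $\tau^\perp\subset M_\tau$; and if $\breve{m}\in\breve{M}_\tau$ has $\pi(\breve{m})$ invertible in $M_\tau$, then expressing $\breve{m}$ in the generators $G_\tau\cup\{g^{-1}:g\in G_\tau^\perp\}$ and applying the face property of $\tau^\perp\subset\tau^\vee$ shows each letter maps into $\tau^\perp$, hence is invertible in $\breve{M}_\tau$, hence so is $\breve{m}$. (In fact the unit submonoid of $\breve{M}_\tau$ equals $\breve{S}_\tau$.)

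It remains to verify, as in Step $(c)$, that $\{\breve{M}_\tau\}_{\tau\in\Delta}$ is an inverse $\Delta$-system with the inclusions of submonoids of $\breve{M}$ as structure maps. Since a composite of inclusions is an inclusion, the cocycle condition $h_{\tau\rho}\circ h_{\sigma\tau}=h_{\sigma\rho}$ of Definition~2.2.3 is automatic, and one only needs $\breve{M}_\sigma\subset\breve{M}_\tau$ whenever $\tau\prec\sigma$. If $\sigma\in\Delta(n)$ this is immediate because then $\sigma\in\Delta_\tau$, so $\breve{M}_\sigma\subset\breve{M}_\tau^\prime\subset\breve{M}_\tau$. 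If $\dim\sigma<n$, then $\Delta_\sigma\subseteq\Delta_\tau$ and $\sigma^\perp\subseteq\tau^\perp$ (because $\tau\subseteq\sigma$), so $\breve{M}_\sigma^\prime\subseteq\breve{M}_\tau^\prime$ and every generator of $\breve{S}_\sigma$, along with its inverse, already lies in $G_\tau\cup\{g^{-1}:g\in G_\tau^\perp\}$; hence $\breve{M}_\sigma\subseteq\breve{M}_\tau$. This yields the required completion, and $\breve{M}_\sigma=\breve{M}^\prime_\sigma$ for $\sigma\in\Delta(n)$ by construction.

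I do not expect a genuine obstacle here: all the substance already lives in Theorem~2.2.5, and this proposition is the observation that Steps $(b)$--$(c)$ of that proof never used the special shape of the Step~$(a)$ submonoids. The single point that wants a word of care is re-reading Lemma~2.2.7 and the admissibility check with an \emph{arbitrary} finite generating set $G_\sigma$ of $\breve{M}^\prime_\sigma$ instead of the distinguished lifts $\breve{u}_{\sigma,i}$ of a generating set of $M_\sigma$; but since those arguments rest only on the convex-geometric fact that $\tau^\perp$ is a face (linear stratum) of the cone $\tau^\vee$, the substitution is harmless.
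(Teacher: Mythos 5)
Your proposal is correct, and it is exactly the approach the paper takes: the paper's own proof of this proposition is a one-line remark deferring to the proof of Theorem~2.2.5, observing that Steps~$(b)$ and $(c)$ of that proof only use admissibility (and hence finite generatedness) of the top-dimensional $\breve{M}_\sigma$, not the particular free-monoid form produced in Step~$(a)$. What you have done is unpack that deferral honestly: rerunning the Lemma~2.2.7 argument with an arbitrary finite generating set $G_\sigma$ in place of the distinguished lifts $\breve{u}_{\sigma,i}$, and noting that the only geometric fact actually needed — that $\tau^\perp$ is a face of the convex cone $\tau^\vee$, so that a sum of elements of $\tau^\vee$ landing in $\tau^\perp$ forces each summand into $\tau^\perp$ — is blind to the choice of generators. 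Your verification of the inclusion $\breve{M}_\sigma\subset\breve{M}_\tau$ for nonmaximal $\sigma$, via $\Delta_\sigma\subseteq\Delta_\tau$, $G_\sigma\subseteq G_\tau$, and $\sigma^\perp\subseteq\tau^\perp$ (hence $G_\sigma^\perp\subseteq G_\tau^\perp$), is a detail the paper's terse proof leaves to the reader and you have filled it in correctly. No gap.
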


\begin{proof}
 The same proof as the proof of Theorem~2.2.5
   goes through
   as long as $\breve{M}_\sigma$, $\sigma\in\Delta(n)$, in Step (a) is admissible to $\sigma$.

\end{proof}

\medskip

\begin{proposition} {\bf [augmentation by $\Delta$-indexed submonoids]}\;
 Given an inverse $\Delta$-system $\{\breve{M}^\prime_\sigma\}_{\sigma\in\Delta}$
   of admissible submonoids of $\breve{M}$    and
  a collection $\{\breve{S}_\sigma\}_{\sigma\in\Delta}$ of finitely generated submonoids of $\breve{M}$
   such that $\pi(\breve{S}_\sigma)\subset M_\sigma$,
 there exists an augmentation $\{\breve{M}_\sigma\}_{\sigma\in\Delta}$  of
  $\{\breve{M}^\prime_\sigma\}_{\sigma\in\Delta}$
  such that $\breve{S}_\sigma \subset \breve{M}_\sigma$ for all $\sigma\in\Delta$.
\end{proposition}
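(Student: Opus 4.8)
\medskip

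The plan is to enlarge every slot of the given system in one uniform step and then restore admissibility by adjoining finitely many inverses, following the pattern of the proof of Theorem~2.2.5. For $\tau\in\Delta$ write $\Delta^\tau:=\{\rho\in\Delta\,|\,\tau\preccurlyeq\rho\}$ for its star; this is finite ($\Delta$ being finite, so that $Y(\Delta)$ is Noetherian), and $\Delta^\sigma\subseteq\Delta^\tau$ whenever $\tau\prec\sigma$. First I would form
$$
 \breve{M}^\flat_\tau\;:=\;\langle\,\breve{M}^\prime_\rho,\ \breve{S}_\rho\ :\ \rho\in\Delta^\tau\,\rangle\;\subset\;\breve{M}\,,
$$
which is finitely generated since each $\breve{M}^\prime_\rho$ and each $\breve{S}_\rho$ is, and fix a finite generating set $G_\tau$ of $\breve{M}^\flat_\tau$ assembled from finite generating sets of those submonoids. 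Recalling (cf.\ Example~1.7) that $\breve{M}$ is generated by the mutually inverse elements $e_i^\ast,{e_i^\ast}^{-1}$, hence is a group, I would then set
$$
 \breve{M}_\tau\;:=\;\langle\,\breve{M}^\flat_\tau,\ \{\,\breve{g}^{-1}\ :\ \breve{g}\in G_\tau,\ \pi(\breve{g})\in\tau^\perp\cap M\,\}\,\rangle\;\subset\;\breve{M}\,.
$$
Since $\breve{M}^\prime_\tau\subseteq\breve{M}^\flat_\tau\subseteq\breve{M}_\tau$ and $\breve{S}_\tau\subseteq\breve{M}^\flat_\tau\subseteq\breve{M}_\tau$ hold by construction, it remains only to see that $\{\breve{M}_\tau\}_{\tau\in\Delta}$ is an inverse $\Delta$-system of submonoids of $\breve{M}$ with each $\breve{M}_\tau$ admissible to $\tau$.

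Next I would check admissibility of each $\breve{M}_\tau$ to $\tau$ in the sense of Definition~2.2.4. Finite generatedness is clear. For condition~(1), $\pi(\breve{M}^\prime_\tau)=M_\tau$ gives surjectivity of $\pi\colon\breve{M}_\tau\to M_\tau$, while every displayed generator maps into $M_\tau$: a generator of $\breve{M}^\prime_\rho$ or of $\breve{S}_\rho$ maps into $M_\rho\subseteq M_\tau$ (as $\tau\preccurlyeq\rho$), and each adjoined $\breve{g}^{-1}$ maps into $-(\tau^\perp\cap M)=\tau^\perp\cap M\subseteq M_\tau$. For condition~(2), given $\breve{m}\in\breve{M}_\tau$ with $\pi(\breve{m})\in\tau^\perp\cap M$, I would write $\breve{m}=\breve{h}_1\cdots\breve{h}_k$ in the displayed generators, so $\pi(\breve{m})=\sum_i\pi(\breve{h}_i)$ with each $\pi(\breve{h}_i)\in M_\tau=\tau^\vee\cap M$; since $\tau^\perp$ is the minimal face of the convex cone $\tau^\vee$ --- equivalently, by iterating the observation used in the proof of the finite-generatedness Lemma~2.2.7 that a sum of elements of $M_\tau$ lies in $\tau^\perp\cap M$ only if every summand does --- each $\pi(\breve{h}_i)$ lies in $\tau^\perp\cap M$. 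Hence each $\breve{h}_i$ is a unit of $\breve{M}_\tau$ (if $\breve{h}_i\in G_\tau$ then $\breve{h}_i^{-1}$ was adjoined; if $\breve{h}_i=\breve{g}^{-1}$ then $\breve{g}\in G_\tau\subseteq\breve{M}_\tau$), so $\breve{m}$ is a product of units, hence a unit.

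Then I would verify the inverse $\Delta$-system property, namely $\breve{M}_\sigma\subseteq\breve{M}_\tau$ for $\tau\prec\sigma$, the structure maps being these inclusions (the cocycle identities of Definition~2.2.1 being automatic for inclusions in $\breve{M}$). The generators of $\breve{M}^\flat_\sigma$ lie in $\breve{M}^\flat_\tau$ because $\Delta^\sigma\subseteq\Delta^\tau$, so $\breve{M}^\flat_\sigma\subseteq\breve{M}^\flat_\tau$; and for an adjoined generator $\breve{g}^{-1}$ of $\breve{M}_\sigma$ --- with $\breve{g}\in G_\sigma$ and $\pi(\breve{g})\in\sigma^\perp\cap M$ --- the inclusion $\tau\subseteq\sigma$ gives $\sigma^\perp\subseteq\tau^\perp$, so $\breve{g}\in\breve{M}^\flat_\sigma\subseteq\breve{M}^\flat_\tau\subseteq\breve{M}_\tau$ while $\pi(\breve{g})\in\tau^\perp\cap M$ is invertible in $M_\tau$; condition~(2) for $\breve{M}_\tau$, already established, then yields $\breve{g}^{-1}\in\breve{M}_\tau$. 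Therefore $\breve{M}_\sigma\subseteq\breve{M}_\tau$, the family $\{\breve{M}_\tau\}_{\tau\in\Delta}$ is an inverse $\Delta$-system of admissible submonoids of $\breve{M}$, it is an augmentation of $\{\breve{M}^\prime_\tau\}_{\tau\in\Delta}$, and $\breve{S}_\tau\subseteq\breve{M}_\tau$ for all $\tau$, which is the assertion.

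The step I expect to carry the actual content is condition~(2) of admissibility for the enlarged $\breve{M}_\tau$: one must rule out that pouring in the submonoids $\breve{S}_\rho$ together with the repair inverses produces an element of $\breve{M}_\tau$ projecting to a unit of $M_\tau$ but not itself a unit of $\breve{M}_\tau$. This is precisely where the minimal-face property of $\tau^\perp$ in $\tau^\vee$ enters, and the elementary compatibility $\sigma^\perp\subseteq\tau^\perp$ for $\tau\prec\sigma$ is exactly what makes a repair carried out at a cone $\sigma$ automatically visible --- through condition~(2) --- in the larger monoid $\breve{M}_\tau$ attached to a face $\tau$ of $\sigma$; that interlocking is what upgrades the enlargement from a mere family of admissible monoids to a genuine inverse $\Delta$-system.
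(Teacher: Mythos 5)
Your proof is correct and takes a genuinely different route from the paper's. The paper argues by a downward induction on the dimension of cones: it sets $\breve{M}_\sigma := \langle\breve{M}^\prime_\sigma, \breve{S}_\sigma\rangle$ at maximal cones, and at $\tau\in\Delta(k-1)$ it first forms $\breve{M}^{\prime\prime}_\tau$ by adjoining to $\langle\breve{M}^\prime_\tau, \breve{S}_\tau\rangle$ the previously built $\breve{M}_\sigma$ for the $k$-dimensional cones $\sigma\succ\tau$, then adjoins those $s$ with $s$ or $s^{-1}$ in $\breve{M}^{\prime\prime}_\tau\cap\pi^{-1}(\tau^\perp)$. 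You instead construct every $\breve{M}^\flat_\tau$ in one shot from the whole star of $\tau$, fix a finite generating set, and adjoin inverses only of generators lying over $\tau^\perp\cap M$; condition~(2) of admissibility then propagates automatically, at a face $\tau$ of $\sigma$, whatever inverses the paper's induction carries along explicitly. Both arguments rest on the same two facts --- the minimal-face property of $\tau^\perp$ in $\tau^\vee$ (the observation behind Lemma~2.2.7) and the inclusion $\sigma^\perp\subseteq\tau^\perp$ for $\tau\prec\sigma$ --- so the conceptual content matches, but yours avoids the induction entirely. What your route buys: the maximal cones are treated uniformly with the rest, which makes explicit that generators of $\breve{S}_\sigma$ projecting to $0$ acquire inverses; the paper's terse base case asserts that strong convexity of $\sigma^\vee$ alone makes $\langle\breve{M}^\prime_\sigma, \breve{S}_\sigma\rangle$ admissible, which is not actually automatic --- if $\breve{S}_\sigma$ is, say, generated by a single nontrivial commutator $c$ of the free group $\breve{M}$, then $\pi(c)=0$ but $c^{-1}\notin\langle\breve{M}^\prime_\sigma, c\rangle$, since projecting to $M$ forces any expression of $c^{-1}$ in those generators to use only $c$'s --- and your generator-level adjunction handles precisely that case. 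Working from a fixed finite generating set also sidesteps the question of whether the paper's $\breve{S}^\prime_\tau$, as written, is literally a submonoid. One point for downstream compatibility: the paper's proof ends by setting $\breve{M}_{\mathbf 0}:=\breve{M}$, which Definition~2.2.12 and Propositions~3.5,~3.8 require when this proposition is invoked; your construction produces an admissible $\breve{M}_{\mathbf 0}$ but not necessarily all of $\breve{M}$, so one should append that harmless final redefinition (it trivially preserves the inverse-system and containment conditions) before using the augmented system as the data of a soft noncommutative toric scheme.
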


\begin{proof}
 We begin with maximal cones $\sigma\in \Delta(n)$
   and define $\breve{M}_\sigma := \langle \breve{M}_\sigma, \breve{S}_\sigma\rangle$.
 Since $\sigma^\vee$ in this case is strongly convex in $M_{\Bbb R}$,
   $\breve{M}_\sigma$ must already be admissible to $\sigma$.
 We then proceed by induction by the condimension of cones in $\Delta$ and assume that
  $\breve{M}_\sigma$ for $\sigma\in\Delta(i)$, $1< k\le i\le n$, are constructed such that
  (1) $\breve{M}_\sigma$ is admissible to $\sigma$,
  (2) $\breve{M}_\sigma \supset \langle \breve{M}^\prime_\sigma, \breve{S}_\sigma \rangle$,
  (3) $\breve{M}_\tau\supset \breve{M}_\sigma$ if $\tau\prec\sigma$, 	
   for $\sigma, \tau\in\Delta(i)$, $1< k\le i\le n$.
 Let $\tau\in \Delta(k-1)$ and
  $$
    \breve{M}^{\prime\prime}_\tau \;
	 :=\;  \langle  \breve{M}^\prime_\tau, \breve{S}_\tau, \breve{M}_\sigma\,|\,
	               \sigma\in\Delta_\tau\cap\Delta(k)\rangle\,.
  $$
 Then
  $\breve{M}^{\prime\prime}_\tau\cap \pi^{-1}(\tau^\perp)$ is finitely generated
  and hence so is the submonoid
   $$
     \breve{S}^\prime_\tau\;
	  :=\; \{s \in \breve{M}\,|\,\mbox{$s$ or $s^{-1}$ is in
	                  $\breve{M}^{\prime\prime}_\tau\cap \pi^{-1}(\tau^\perp)$}\}
   $$
   of $\breve{M}$.
 Define
  $$
   \breve{M}_\tau\; :=\; \langle \breve{M}^{\prime\prime}_\tau, \breve{S}^\prime_\tau \rangle\,.
  $$
 Then
  (1) $\breve{M}_\tau$ is admissible to $\tau$,
  (2) $\breve{M}_\tau \supset \langle \breve{M}^\prime_\tau, \breve{S}_\tau \rangle$,
  (3) $\breve{M}_\tau\supset \breve{M}_\sigma$ if $\tau\prec\sigma$ for all $\sigma\in\Delta$.
 Finally we set $\breve{M}_{\mathbf{0}}=\breve{M}$.
 By construction,
  $\{\breve{M}_\sigma\}_{\sigma\in\Delta}$ is an augmentation of
    $\{\breve{M}^\prime_\sigma\}_{\sigma\in\Delta}$
    that satisfies $\breve{S}_\sigma \subset \breve{M}_\sigma$ for all $\sigma\in\Delta$.
 
\end{proof}

\medskip

\begin{corollary} {\bf [from $\Delta$-indexed to inverse $\Delta$-system]}\;
 Let $\{\breve{S}_\sigma\}_{\sigma\in\Delta}$ be a collection of finitely generated submonoids
   of $\breve{M}$ such that $\pi(\breve{S}_\sigma)\subset M_\sigma$.
 Then there exists an inverse $\Delta$-system $\{\breve{M}_\sigma\}_{\sigma\in\Delta}$
  of admissible submonoids of $\breve{M}$
  such that $\breve{S}_\sigma\subset \breve{M}_\sigma$ for all $\sigma\in\Delta$.
\end{corollary}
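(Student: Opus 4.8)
\; The plan is to obtain this corollary as an immediate bookkeeping consequence of the two preceding results, Theorem~2.2.5 and Proposition~2.2.9, with no new construction required.

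First I would invoke Theorem~2.2.5 to fix \emph{some} inverse $\Delta$-system $\{\breve{M}^\prime_\sigma\}_{\sigma\in\Delta}$ of submonoids of $\breve{M}$ with $\breve{M}^\prime_\sigma$ admissible to $\sigma$ for every $\sigma\in\Delta$; such a system exists unconditionally, so this step imposes no hypotheses on the given data. Next I would check that the collection $\{\breve{S}_\sigma\}_{\sigma\in\Delta}$ meets exactly the input requirements of Proposition~2.2.9 relative to $\{\breve{M}^\prime_\sigma\}_{\sigma\in\Delta}$: each $\breve{S}_\sigma$ is by hypothesis a finitely generated submonoid of $\breve{M}$, and by hypothesis $\pi(\breve{S}_\sigma)\subset M_\sigma$. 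Applying Proposition~2.2.9 then yields an augmentation $\{\breve{M}_\sigma\}_{\sigma\in\Delta}\succcurlyeq\{\breve{M}^\prime_\sigma\}_{\sigma\in\Delta}$, which is in particular again an inverse $\Delta$-system of submonoids of $\breve{M}$ with each $\breve{M}_\sigma$ admissible to $\sigma$, and which satisfies $\breve{S}_\sigma\subset\breve{M}_\sigma$ for all $\sigma\in\Delta$. This $\{\breve{M}_\sigma\}_{\sigma\in\Delta}$ is the desired system, completing the proof. (An alternative route would be to re-run the codimension induction in the proof of Proposition~2.2.9 from scratch, starting over maximal cones directly from the $\breve{S}_\sigma$ via Lemma~2.2.6 rather than augmenting an existing system; but invoking the two earlier statements is cleaner.)

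Because the statement is essentially a repackaging of Theorem~2.2.5 and Proposition~2.2.9, I do not expect a genuine obstacle. The only point deserving (trivial) attention is verifying that the single hypothesis $\pi(\breve{S}_\sigma)\subset M_\sigma$ is precisely what is needed to feed Proposition~2.2.9 — it guarantees that adjoining $\breve{S}_\sigma$ to an admissible submonoid over a maximal cone keeps the projection landing inside $M_\sigma$, and, at lower-dimensional cones $\tau$, that the finite-generation input of Lemma~2.2.8 remains available for the submonoid $\breve{M}^{\prime\prime}_\tau\cap\pi^{-1}(\tau^\perp)$. All the substantive work — the free-monoid model over maximal cones, the finite generation of the inverse-completion along $\tau^\perp$, and the compatibility $\breve{M}_\tau\supset\breve{M}_\sigma$ for $\tau\prec\sigma$ — is already carried by those earlier results, so nothing further needs to be established here.
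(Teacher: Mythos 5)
Your proposal is correct and is essentially identical to the paper's own proof: fix an inverse $\Delta$-system of admissible submonoids via Theorem~2.2.5, then augment it to contain the given $\{\breve{S}_\sigma\}_{\sigma\in\Delta}$ via the augmentation proposition. One small bookkeeping note: the result you invoke for the augmentation step is Proposition~2.2.10 (``augmentation by $\Delta$-indexed submonoids''), not Proposition~2.2.9 (``completion to inverse $\Delta$-system''), and the finite-generation lemma is 2.2.7, not 2.2.8; your descriptions of the statements' content are right, so these are just off-by-one slips in the reference numbers.
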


\begin{proof}
 Choose any inverse $\Delta$-system of admissible submonoids of $\breve{M}$ from Theorem~2.2.5
  and apply Proposition~2.2.10.

\end{proof}

\bigskip

\begin{flushleft}
{\bf Soft nocommutative toric schemes associated to $\Delta$}
\end{flushleft}
\begin{definition} {\bf [soft noncommutative toric schemes associated to $\Delta$]}\; {\rm
 (Continuing the previous notations.)
 Let
  $\{\breve{M}_\sigma\}_{\sigma\in\Delta}$ be an inverse $\Delta$-system of admissible submonoids
    of $\breve{M}$ such that $\breve{M}_\sigma\simeq \langle z_1,\,\cdots\,,\, z_n\rangle$ for
    all maximal cones $\sigma\in \Delta(n)$ and $\breve{M}_{\mathbf{0}}=\breve{M}$    and 	
  $\{\breve{R}_\sigma\}_{\sigma\in\Delta}
     := \{{\Bbb C}\langle\breve{M}_\sigma\rangle\}_{\sigma\in\Delta}$
	be the associated inverse $\Delta$-system of monoid algebras over ${\Bbb C}$.
 Then the corresponding $\Delta$-system of noncommutative affine schemes
   $$
     \breve{Y}(\Delta)\; :=\;
	  \breve{\cal U}(\Delta)\;:=\;  \{\breve{U}_\sigma\}_{\sigma\in\Delta}
   $$
   is called a {\it soft noncommutative toric scheme with smooth fundamental charts} associated to $\Delta$.
 Here,
  $\breve{U}_\sigma$ is the noncommutative affine scheme associated to $\breve{R}_\sigma$ (cf.\ Definition~2.1.1)
    for $\sigma\in\Delta$,
  the notation $\breve{\cal U}(\Delta)$ emphasizes that this is a gluing system of some particular charts, and
  the notation $Y(\Delta)$ emphasizes that this is treated as a noncommutative space
    as a whole.\footnote{Here,
                    we use the notation $Y(\Delta)$, rather than $X(\Delta)$ following [Fu], due to that
					such a space will be used as a target space for D-branes, whose world-volume is generally denoted
					$X^{\!\Aztiny}$.
					} 
					
 With an abuse of language,
  we shall call $\breve{Y}(\Delta)$ {\it $n$-dimensional}
  since the function ring of each fundamental chart is freely generated by $n$ coordinate functions.
 
 For $\sigma\in\Delta$,
   the monoid algebra ${\Bbb C}\langle\breve{M}_\sigma\rangle$ is called interchangeably
   the {\it function ring} or the {\it local coordinate rings} of the {\it chart} $\breve{U}_\sigma$.
 The assignment
   $$
     \breve{U}_\sigma\; \longmapsto\;
	   \breve{R}_\sigma := {\Bbb C}\langle \breve{M}_\sigma\rangle\,,\:\:
	  \sigma\in \Delta\,,
   $$	
   is called the {\it structure sheaf}\, of $\breve{Y}(\Delta)$    and
   is denoted ${\cal O}_{\breve{Y}(\Delta)}$ as in ordinary Commutative Algebraic Geometry.
 By construction, for cones $\tau\prec\sigma$ in $\Delta$, one has
  a dominant morphism of noncommutative affine schemes $\breve{U}_\tau\rightarrow \breve{U}_\sigma$
  and a monoid-algebra monomorphism\footnote{Though
                                         our focus in this work is on the rings and their homomorphisms for Noncommutative Algebraic Geometry,
						                 we try to keep the contravariant underlying geometry as in Commutative Algebraic Geometry in mind
										   whenever possible.
         								 This is why we use the notation $\iota_{\tau\sigma}^\sharp$ here   and
										   reserve $\iota_{\tau\sigma}$ for the corresponding morphism of underlying ``spaces".
			                             Here, there is no indication that
										     $\iota_{\tau\sigma}: \breve{U}_\tau \rightarrow \breve{U}_\sigma$
			                                 in any sense is an inclusion,
										 though it is true that, when restricted to $Y(\Delta)$,
				                         $\iota_{\tau\sigma}: U_\tau\rightarrow U_\sigma$ is an open set inclusion in the sense of
				                              affine schemes in Commutative Algebraic Geometry.
                                                                                          } 
    $$
	  \iota_{\tau\sigma}^\sharp\;:\;
	 {\cal O}_{\breve{Y}(\Delta)}(\breve{U}_\sigma)
	    = {\Bbb C}\langle \breve{M}_\sigma\rangle\;
	    \hookrightarrow\;
		{\cal O}_{\breve{Y}(\Delta)}(\breve{U}_\tau)
		    = {\Bbb C}\langle \breve{M}_\tau\rangle\,.
    $$
 These replace the role of `{\it open sets}' and `{\it restriction to open sets}'	 respectively
   in the usual definition of the `{\it structure sheaf}\,'
   of a {\it scheme} in Commutative Algebraic Geometry.
 
  An {\it ideal sheaf $\breve{\cal I}$} on $\breve{Y}$
    is an inverse $\Delta$-system $\{\breve{I}_\sigma\}_{\sigma\in\Delta}$,
    where $\breve{I}_\sigma$ is a two-sided ideal of $\breve{R}_\sigma$,
	such that $\iota_{\tau\sigma}^\sharp(I_\sigma)=I_\tau$.	
  Given an ideal sheaf $\breve{\cal I}=\{\breve{I}_\sigma\}_{\sigma\in\Delta}$ on $\breve{Y}(\Delta)$,
    one can form an inverse $\Delta$-system of ${\Bbb C}$-algebras
	$\{\breve{R}_\sigma/\breve{I}_\sigma\}_{\sigma\in\Delta}$,
	with the inclusion
	$\underline{\iota}_{\tau\sigma}^\sharp: \breve{R}_\sigma/\breve{I}_\sigma
	    \hookrightarrow \breve{R}_\tau/\breve{I}_\tau$, for $\tau\prec\sigma\in\Delta$,
	 naturally induced from $\iota_{\tau\sigma}^\sharp$.
  We will think of this inverse $\Delta$-system as defining a {\it soft noncommutative closed subscheme} $\breve{Z}$
   of $\breve{Y}(\Delta)$ and denoted its structure sheaf ${\cal O}_{\breve{Z}}$
   and write the quotient as ${\cal O}_{\breve{Y}(\Delta)}\rightarrow {\cal O}_{\breve{Z}}$.
 Redenote $\breve{\cal I}$ by ${\cal I}_{\breve{Z}}$. Then one has a short exact sequence
    $$
     0\; \longrightarrow\; {\cal I}_{\breve{Z}}\;  \longrightarrow\; {\cal O}_{\breve{Y}(\Delta)}\;
	      \longrightarrow\;  {\cal O}_{\breve{Z}}\; \longrightarrow\; 0\,.
    $$	
 
 Let $N^\prime$ be another lattice (isomorphic to ${\Bbb Z}^{n^\prime}$
      for some $n^\prime$ via a fixed basis $(e^\prime_1, \,\cdots\,,\, e^\prime_{n^\prime})$ of $N^\prime$),
  $\Delta^\prime$ be a fan in $N^\prime_{\Bbb R}$ that satisfies Assumption~2.2.2,   and
  $\breve{Y}(\Delta^\prime)$ be a soft noncommutative toric scheme associated to $\Delta^\prime$,
    with the underlying inverse $\Delta^\prime$-system of submonoids of $\breve{M}^\prime$
	  denoted $\{\breve{M}^\prime_{\sigma^\prime}\}_{\sigma^\prime\in\Delta^\prime}$.
 A {\it toric morphism}
   $$
     \breve{\varphi}\; :\; \breve{Y}(\Delta)\; \longrightarrow\;  \breve{Y}(\Delta^\prime)
   $$
   is a homomorphism $\varphi: N \rightarrow N^\prime$ of lattices that satisfies the conditions:
   \begin{itemize}
    \item[(1)]
	$\varphi$ induces a {\it map between fans} (in notation, $\varphi: \Delta\rightarrow \Delta^\prime$);
	 i.e., for each $\sigma\in\Delta$ there exists a $\sigma^\prime\in\Delta^\prime$ such that
	   $\varphi(\sigma)\subset \sigma^\prime$.
	
    \item[(2)]	
	The induced monoid-homomorphism $\breve{\varphi}^\sharp:\breve{M}^\prime\rightarrow \breve{M}$
      restricts to a monoid homomorphism
	  $\breve{\varphi}_{\sigma\sigma^\prime}^\sharp:
	         \breve{M}^\prime_{\sigma^\prime}\rightarrow \breve{M}_\sigma$
	  whenever $\varphi(\sigma)\subset \sigma^\prime$.
	 (Denote the induced
	    ${\Bbb C}\langle\breve{M}^\prime_{\sigma^\prime}\rangle
		  \rightarrow {\Bbb C}\langle\breve{M}_\sigma\rangle$
		also by $\breve{\varphi}_{\sigma\sigma^\prime}^\sharp$.)	
	 (Cf.\ Footnote~2.)
   \end{itemize}
  Thus, a toric morphism $\breve{\varphi}: \breve{Y}(\Delta)\rightarrow \breve{Y}(\Delta^\prime)$
    is a map of lattices $\varphi: N\rightarrow N^\prime$ that induces
	a $(\Delta^\prime, \Delta)$-system of monoid-algebra homomorphisms
	$\{\breve{\varphi}_{\sigma\sigma^\prime}^\sharp:	
	  {\Bbb C}\langle\breve{M}^\prime_{\sigma^\prime}\rangle
		  \rightarrow {\Bbb C}\langle\breve{M}_\sigma\rangle	
	      \}_{\sigma^\prime\in\Delta^\prime,\, \sigma\in\Delta,\, \varphi(\sigma)\subset \sigma^\prime}$.
  The nature of this collection justifies it be denoted
   $$
    \breve{\varphi}^\sharp\;:\;
	  {\cal O}_{\breve{Y}(\Delta^\prime)}\; \longrightarrow\;
	  {\cal O}_{\breve{Y}(\Delta)}\,.
   $$
}\end{definition}

\medskip

\begin{definition} {\bf [reference toric chart and fundamental toric charts of $\breve{Y}(\Delta)$]}\; {\rm
 (Continuing Definition~2.2.12.)
 A chart $\breve{U}_\sigma$ of $\breve{Y}(\Delta)$
     that is associated to a maximal cone $\sigma\in\Delta(n)$
   is called a {\it fundamental toric chart} of $\breve{Y}(\Delta)$.
 Among them,
   the one associated to the cone generated by the given basis $(e_1,\,\cdots\,,\,e_n)$ of the lattiice $N$
   is called the {\it reference toric chart} of $\breve{Y}(\Delta)$.
}\end{definition}

\bigskip

Fundamental toric charts are all isomorphic to the smooth noncommutative affine $n$-space $\nc\mathbf{A}^n$.
These charts should be thought of as the basic, good pieces to be glued via the subcollection
  $\{\breve{U}_\tau\}_{\tau\in \Delta-\Delta(n)}$.
The reference chart ensures that we have at least one fundamental toric chart $\breve{U}_\sigma$
  such that the morphism $\breve{U}_{\mathbf{0}}\rightarrow \breve{U}_\sigma$
   of noncommutative affine schemes resembles the inclusion of an Zariski open set.
Together, this allows one to think of $\breve{Y}(\Delta)$ as a ``partial compactification" of $\nc\mathbf{A}^n$
  via soft gluings of additional copies of $\nc\mathbf{A}^n$'s.
 
\bigskip

\begin{definition} {\bf [softening of noncommutative toric scheme]}\; {\rm
 Let $\breve{Y}(\Delta)$ and $\breve{Y}^\prime(\Delta)$ be two soft noncommutative toric schemes
  associated to a fan $\Delta$, with the underlying inverse $\Delta$-system of monoid algebras
   $\{{\Bbb C}\langle \breve{M}_\sigma \rangle\}_{\sigma\in\Delta}$ and
   $\{{\Bbb C}\langle\breve{M}^\prime_\sigma\rangle\}_{\sigma\in\Delta}$ respectively.
 Then $\breve{Y}^\prime(\Delta)$ is called a {\it softening} of $\breve{Y}(\Delta)$
  if $\breve{M}_\sigma=\breve{M}^\prime_\sigma$ for maximal cones $\sigma\in\Delta$
     (i.e.\ $\breve{Y}(\Delta)$ and $\breve{Y}^\prime(\Delta)$ have identical fundamental toric charts)
	  and
	 $\breve{M}_\sigma \subset \breve{M}^\prime_\sigma$ for nonmaximal cones $\sigma\in\Delta$
	 (i.e.\ the gluings in $\breve{Y}^\prime(\Delta)$ is {\it softer} than those in $\breve{Y}(\Delta)$).	
 By construction, there is a built-in toric morphism\\
   $\breve{\varphi}: \breve{Y}^\prime(\Delta)\rightarrow \breve{Y}(\Delta)$, called
   the {\it softening morphism}.
}\end{definition}
 
\medskip

\begin{remark} $[$meaning of softening: comparison to $\Isom$-functor for moduli stack$]$\; {\rm
 Introducing the notion of `softening' in the play is meant to remedy the fact that we don't have a good theory of localizations
  of  a noncommutative ring to apply to our construction of noncommutative schemes.
 Rather we keep a collection of charts as principal ones and the remaining charts serve for the purpose of gluing
  these principal charts. These `secondary charts' are then allowed to be `dynamically adjusted' to fulfill the purpose of
  serving as a medium between principal charts.
 One may compare this to the gluing in the construction of a moduli stack.
 There one cannot directly glue two charts for the moduli stack as schemes but rather has to create the medium chart
  via the $\Isom$-functor, cf.\ [Mu].
 See Sec.\ 3 how softening is used to construct invertible sheaves and twisted sections on a soft noncommutative toric scheme.
}\end{remark}

\bigskip

\noindent
$(c)$ {\it The ${\Bbb T}^n$-action}

\medskip

\noindent
The ${\Bbb T}^n$-action on ${\Bbb C}[M]$ leaves each ${\Bbb C}\cdot m$, $m\in M$, invariant.
As ${\Bbb C}= \Center ({\Bbb C}\langle \breve{M}\rangle)$,
 the ${\Bbb T}^n$-action on ${\Bbb C}[M]$ naturally lifts to a ${\Bbb T}^n$-action
  on ${\Bbb C}\langle\breve{M}\rangle$:
  $$
   \hspace{4em}
   \xymatrix{
    {\Bbb C}\langle\breve{M}\rangle  \ar[d]_-{\pi} \ar[rr]^-{\mathbf {t}}
	       &&  {\Bbb C}\langle\breve{M}\rangle   \ar[d]^-{\pi}\\
	 {\Bbb C}[M] \ar[rr]^-{\mathbf{t}}                                             &&{\Bbb C}[M]
	       &\hspace{-2.4em},  \hspace{2em}\mathbf{t}\in {\Bbb T}^n,
    }
  $$
  that leaves each ${\Bbb C}\cdot\breve{m}$, $\breve{m}\in \breve{M}$, invariant.
 It follows that
   ${\Bbb T}^n$ leaves each ${\Bbb C}\langle \breve{M}_\sigma\rangle$, $\sigma\in\Delta$, invariant
      and that
   $\iota_{\tau\sigma}^\sharp:
        {\Bbb C}\langle\breve{M}_\sigma\rangle \hookrightarrow {\Bbb C}\langle \breve{M}_\tau\rangle$,
	 $\tau\prec\sigma$,
    are ${\Bbb T}^n$-equivariant.
 This defines the ${\Bbb T}^n$-action on $\breve{Y}(\Delta)$.

\bigskip

\noindent
$(d)$ {\it The noncommutative toroidal morphism from the inclusion of the $0$-cone
        $\mathbf{0}\prec\sigma\in\Delta$}

\medskip

\noindent
Since
 $\mathbf{0}\preccurlyeq \sigma$, for all $\sigma\in\Delta$, and
 $\breve{M}_\mathbf{0}={\Bbb C}\langle\breve{M}\rangle$,
 one has a built-in ${\Bbb T}^n$-equivariant morphism
  $$
     \breve{\Bbb T}^n\, :=\, \Scheme ({\Bbb C}\langle\breve{M}\rangle)\;
      \longrightarrow\;  \breve{Y}(\Delta)\,.
  $$
Which plays the role of the toroidal embedding ${\Bbb T}^n\hookrightarrow Y(\Delta)$ in the commutative case.

\bigskip

\noindent
$(e)$ {\it The built-in ${\Bbb T}^n$-equivariant embedding $Y(\Delta)\hookrightarrow \breve{Y}(\Delta)$}

\medskip

\noindent
The monoid epimorphism $\pi:\breve{M}\rightarrow M$ restricts to monoid epimorphisms
 $\pi: \breve{M}_\sigma\rightarrow M_\sigma$, for all $\sigma\in\Delta$.
Which in turn gives a $\Delta$-collection of ${\Bbb T}^n$-equivariant monoid-algebra epimorphisms
 $\{{\Bbb C}\langle\breve{M}_\sigma\rangle \rightaarrow {\Bbb C}[M_\sigma]\}_{\sigma\in\Delta}$
  that commute with inclusions:
   $$
    \hspace{6em}
    \xymatrix{
     {\Bbb C}\langle\breve{M}_\sigma\rangle\;   \ar@{^{(}->}[rr]  \ar[d]_-{\pi}
	     && {\Bbb C}\langle\breve{M}_\tau\rangle   \ar[d]^-{\pi}     \\
	 {\Bbb C}[M_\sigma]\;  \ar@{^{(}->}[rr]
	   && {\Bbb C}[M_\tau]
	        &\hspace{-2.4em}, \hspace{2em}\tau\prec\sigma\in\Delta\,.
     }
   $$
This defines a built-in ${\Bbb T}^n$-equivariant embedding $Y(\Delta)\hookrightarrow \breve{Y}(\Delta)$.

\bigskip

\noindent
$(f)$ {\it The master commutative subscheme of\, $\breve{Y}(\Delta)$}

\medskip

\noindent
The inclusions
   ${\Bbb C}\langle\breve{M}_\sigma\rangle \hookrightarrow {\Bbb C}\langle\breve{M}_\tau\rangle   $,
   $\tau\prec\sigma\in\Delta$,
 induce ${\Bbb C}$-algebra homomorphisms
   $$
    \jmath_{\tau\sigma}^\sharp\; :\;
	   {\Bbb C}\langle\breve{M}_\sigma\rangle
	   /[{\Bbb C}\langle\breve{M}_\sigma\rangle,\,{\Bbb C}\langle\breve{M}_\sigma\rangle ]\;
	 \longrightarrow\;
	  {\Bbb C}\langle\breve{M}\rangle_\tau
	   /[{\Bbb C}\langle\breve{M}_\tau\rangle,\,{\Bbb C}\langle\breve{M}_\tau\rangle ]\,,
	      \hspace{2em} \tau\prec\sigma\in\Delta\,.	
   $$
This defines another inverse $\Delta$-system of commutative ${\Bbb C}$-algebras
 $$
  (\{{\Bbb C}\langle\breve{M}_\sigma\rangle
               /[{\Bbb C}\langle\breve{M}_\sigma\rangle,\,
			      {\Bbb C}\langle\breve{M}_\sigma\rangle]\}_{\sigma\in\Delta},\,
      \{\jmath_{\tau\sigma}^\sharp\}_{\tau\prec\sigma\in\Delta})
 $$
 and hence a (generally singular) commutative scheme $\breve{Y}(\Delta)^\tinyclubsuit$ over ${\Bbb C}$.
By construction
  any morphism from a commutative scheme $X$ to $\breve{Y}$ factors through a morphism\\
  $X\rightarrow \breve{Y}(\Delta)^\tinyclubsuit \hookrightarrow \breve{Y}(\Delta)$.
In particular, one has commutative diagrams
 $$
  \xymatrix{
   {\Bbb C}\langle\breve{M}_\sigma\rangle
	   /[{\Bbb C}\langle\breve{M}_\sigma\rangle,\,{\Bbb C}\langle\breve{M}_\sigma\rangle ]
	   \ar[rr]^-{\jmath_{\tau\sigma}^\sharp} \ar@{->>}[d]
	  &&
	  {\Bbb C}\langle\breve{M}\rangle_\tau
	   /[{\Bbb C}\langle\breve{M}_\tau\rangle,\,{\Bbb C}\langle\breve{M}_\tau\rangle ]
	       \ar@{->>}[d]      \\
  \;{\Bbb C}[M_\sigma]\;  \ar@{^{(}->}[rr]
     && {\Bbb C}[M_\tau]
	    & \hspace{-2em}, \hspace{2em} \tau\prec\sigma\in\Delta\,,	
   }
 $$
 and hence $Y(\Delta)\; \subset\, \breve{Y}(\Delta)^\tinyclubsuit \subset \breve{Y}(\Delta)$.
 Note that, by construction,\\
   ${\Bbb C}\langle\breve{M}_\sigma\rangle
	   /[{\Bbb C}\langle\breve{M}_\sigma\rangle,\,{\Bbb C}\langle\breve{M}_\sigma\rangle ]
	  \stackrel{\sim}{\longrightaarrow} {\Bbb C}[M_\sigma]	 $ for $\sigma\in\Delta(n)\cup\{\mathbf{0}\}$.
 However, such isomorphism on local charts may not holds for $\tau\in\Delta(k)$, $1\le k\le n-1$.
 
\bigskip

\begin{definition} {\bf [master commutative subscheme of $\breve{Y}(\Delta)$]}\; {\rm
 $\breve{Y}(\Delta)^\tinyclubsuit$ is called the {\it master commutative subscheme} of $\breve{Y}(\Delta)$.
}\end{definition}

\bigskip
\bigskip

\section{Invertible sheaves on a soft noncommutative toric scheme, twisted sections,
        and soft noncommutative schemes via toric geometry}

We construct in this section a class of noncommutative spaces, named `soft noncommutative schemes'
 from invertible sheaves and their twisted sections on $\breve{Y}(\Delta)$.

\bigskip

\begin{flushleft}
{\bf Modules and invertible sheaves on a soft noncommutative toric scheme}
\end{flushleft}
\begin{sdefinition} {\bf [sheaf on $\breve{Y}(\Delta)$ and ${\cal O}_{\breve{Y}(\Delta)}$-module]}\; {\rm
 $(1)$\:\:
 An inverse $\Delta$-system of objects in a category ${\cal C}$ (cf.\ Definition 2.2.1)
  is also called a {\it sheaf} (of objects in ${\cal C}$) on $\breve{Y}(\Delta)$.
 For example,
   ${\cal O}_{\breve{Y}(\Delta)}$ is a sheaf of ${\Bbb C}$-algebras on $\breve{Y}(\Delta)$.
  (Which justifies its name: the structure sheaf of $\breve{Y}(\Delta)$).
 
 \medskip
 
 $(2)$\:\:
 Let $\breve{Y}(\Delta) =\{\breve{U}_\sigma\}_{\sigma\in\Delta}$
  be a soft noncommutative toric scheme with the underlying inverse $\Delta$-system of monoid algebras
  $(\{\breve{R}_\sigma\}_{\sigma\in\Delta},
       \{\iota^\sharp_{\tau\sigma}\}_{\tau,\,\sigma\in\Delta,\,\tau\prec\sigma} )$.
 A {\it left ${\cal O}_{\breve{Y}(\Delta)}$-module}
  is an inverse $\Delta$-system
  $$
    \breve{\cal F}\;
    :=\; (\{\breve{F}_\sigma\}_{\sigma\in\Delta},
              \{h_{\sigma\tau}\}_{\tau,\, \sigma\in\Delta,\, \tau\prec\sigma})\,,
  $$
  where
    $\breve{F}_\sigma$ is a left $\breve{R}_\sigma$-module  and
	$h_{\sigma\tau}:
	    \iota_{\sigma\tau}^\sharp \breve{F}_\sigma
		   :=   \breve{R}_\tau\otimes_{\breve{R}_\sigma}\!\breve{F}_\sigma
	    \rightarrow \breve{F}_\tau$
  on $\breve{U}_\tau$ such that the following diagram commutes
  $$
   \xymatrix{
    \iota_{\sigma\rho}^\sharp \breve{F}_\sigma
	        \ar[rr]^-{\iota_{\tau\rho}^\sharp h_{\sigma\tau}   }
			\ar@/^2.6pc/[rrrr]^-{h_{\sigma\rho}}
	  && \iota_{\tau\rho}^\sharp \breve{F}_\tau
	       \ar[rr]^-{h_{\tau\rho}}
	  && \breve{F}_\rho
   }
  $$
  on $\breve{U}_\rho$ for $\rho\prec\tau\prec\sigma$.	
  
  An element of $\breve{F}_\sigma$  is called
    a {\it local section}  of $\breve{\cal F}$ over $\breve{U}_\sigma$.
  A {\it section}  (or {\it global section})of $\breve{\cal F}$ is
   a collection $\{\breve{s}_\sigma\}_{\sigma\in\Delta}$ of local sections
     $\breve{s}_\sigma\in \breve{F}_\sigma$, $\sigma\in\Delta$, of $\breve{\cal F}$
   such that $h_{\sigma\tau}(\breve{s}_\sigma)=\breve{s}_\tau$, $\tau\prec\sigma\in\Delta$.
  In the last expression, $\breve{s}_\sigma$ is identified as
   $1\otimes \breve{s}_\sigma
       \in \breve{R}_\tau\otimes_{\breve{R}_\sigma}\breve{F}_\sigma$.
  The ${\Bbb C}$-vector space of sections of $\breve{\cal F}$ on $\breve{Y}(\Delta)$
    is denoted $H^0(\breve{Y}(\Delta), \breve{\cal F})$ or simply $H^0(\breve{\cal F})$.
   
  A {\it homomorphism}
   $$
     f\; :\;
	  \breve{\cal F}\,
	    :=\, (\{\breve{F}_\sigma\}_{\sigma\in\Delta},
              \{h_{\sigma\tau}\}_{\tau,\, \sigma\in\Delta,\, \tau\prec\sigma})\;
	   \longrightarrow\;
	 \breve{\cal F}^\prime\,
	  :=\, (\{\breve{F}^\prime_\sigma\}_{\sigma\in\Delta},
              \{h^\prime_{\sigma\tau}\}_{\tau,\, \sigma\in\Delta,\, \tau\prec\sigma})		
   $$
   of left ${\cal O}_{\breve{Y}(\Delta)}$-modules
   is a collection $\{f_\sigma\}_{\sigma\in\Delta}$,
    where
	 $f_\sigma: \breve{F}_\sigma\rightarrow \breve{F}^\prime_\sigma$
	  is a homomorphism of left $\breve{R}_\sigma$-modules,
    such that the following diagrams commute    	
	$$
	  \xymatrix{	
	   \iota^\sharp_{\sigma\tau}\breve{F}_\sigma
	    \ar[rr]^-{h_{\sigma\tau}}    \ar[d]_-{\iota^\sharp_{\sigma\tau}(f_\sigma)}
	      && \breve{F}_\tau \ar[d]^-{f_\tau}  \\
	   \iota^\sharp_{\sigma\tau} \breve{F}^\prime_\sigma
	   \ar[rr]^-{h^\prime_{\sigma\tau}}
	      && \breve{F}^\prime_\tau
	  }
	$$
    for $\tau\prec\sigma\in\Delta$.
  $f$ is called
   {\it injective} (i.e.\ $f$ a {\it monomorphism}) if in addition $f_\sigma$ is injective for all $\sigma\in\Delta$;
   {\it surjective }(i.e.\ $f$ an {\it epimorphism}) if in addition $f_\sigma$ is surjective for all $\sigma\in\Delta$;
    an {\it isomorphism} if in addition $f_{\sigma}$ is an isomorphism of left $\breve{R}_\sigma$-modules
	      for all $\sigma\in\Delta$.
  
  \medskip
  
  $(3)$\:\:
  Recall the built-in ${\Bbb T}^n$-action $g_\bullet$ on $\breve{Y}(\Delta)$.
  Then $\breve{\cal F}$ is called {\it ${\Bbb T}^n$-equivariant}
    if there exist isomorphisms
	  $\phi_{\mathbf{t}}: g_{\mathbf{t}}^\ast\breve{\cal F}
		  \stackrel{\sim}{\longrightarrow }  \breve{\cal F}$,
	  $\mathbf{t}\in{\Bbb T}^n$,
	of ${\cal O}_{\breve{Y}(\Delta)}$-modules such that
	$\phi_{\mathbf{t}_2}\circ \phi_{\mathbf{t}_1}= \phi_{\mathbf{t}_1+\mathbf{t}_2}$\,:
   $$
    \xymatrix{
	  g_{\mathbf{t}_2}^\ast(g_{\mathbf{t}_1}^\ast \breve{\cal F})
	     \ar[rr]^-{\phi_{\mathbf{t}_1}} \ar@/^2.6pc/[rrrr]^-{\phi_{\mathbf{t}_1+\mathbf{t}_2}}		
	  && g_{\mathbf{t}_2}^\ast\breve{\cal F} \ar[rr]^-{\phi_{\mathbf{t}_2}}
	  && \breve{\cal F}	
	}
   $$
   for all $\mathbf{t}_1,\, \mathbf{t}_2\in{\Bbb T}^n$. 	
  Explicitly, note that
    the ${\Bbb T}^n$-action on $\breve{Y}(\Delta)$ leaves each chart
	$\breve{U}_\sigma$, $\sigma\in\Delta$ invariant and
  $\phi_\mathbf{t}$ restricts to an $\breve{R}_\sigma$-module homomorphism
    $$
	  \phi_\mathbf{t}\; :\; \breve{F}_\sigma^\mathbf{t}\; \longrightarrow\; \breve{F}_\sigma\,,\:\:
	   \mbox{with}\:\:
	   \breve{s}\; \longmapsto\; \phi_\mathbf{t}(\breve{s})\,=:\, g_\mathbf{t}\cdot\breve{s}
	$$
    on each chart $\breve{U}_\sigma$, $\sigma\in\Delta$.
  Here $\breve{F}_\sigma^\mathbf{t}:= g_\mathbf{t}^\ast\breve{F}_\sigma$,
    which is the same ${\Bbb C}$-vector space as $\breve{F}_\sigma$
	but with the $\breve{R}_\sigma$-module structure defined by
     $\breve{r}\stackrel{\mathbf{t}}{\cdot}\breve{s}
	   :=  g_\mathbf{t}^\sharp(\breve{r})\cdot \breve{s}$.		
  This defines a ${\Bbb T}^n$-action on $\breve{F}_\sigma$ that satisfies
	$\phi_\mathbf{t}(g_\mathbf{t}^\sharp(\breve{r})\cdot \breve{s})
	   = \breve{r} \cdot (g_\mathbf{t}\cdot \breve{s})$
	 by tautology, for  $\mathbf{t}\in{\Bbb T}^n$,  $\breve{r}\in \breve{R}_\sigma$, and
		$\breve{s}\in \breve{F}_\sigma$.
  The ${\Bbb T}^n$-action on $\breve{F}_\sigma$, $\sigma\in\Delta$, is equivariant under gluings:
	$$
      h_{\sigma\tau}(g_\mathbf{t}\cdot\breve{s})\;
	    =\;  g_\mathbf{t}\cdot h_{\sigma\tau}(\breve{s}) \,, 	
	$$
	for $\breve{s}\in \breve{F}_\sigma$,
	     $\mathbf{t}\in {\Bbb T}^n$, and
		 $\tau\prec\sigma\in\Delta$.

  \medskip
 
  $(4)$\:\:		
  Similarly, for {\it right} and {\it two-sided ${\cal O}_{\breve{Y}(\Delta)}$-modules}.
  
  \medskip
  
  $(5)$\:\:
  Let
     $\breve{\cal A}= (\{\breve{A}_\sigma\}_{\sigma\in\Delta},
	                              \{\iota_{\tau\sigma}^\sharp\}_{\tau\prec\sigma\in\Delta})$
	       be a sheaf of ${\Bbb C}$-algebras,
	 $\breve{\cal F}=(\{\breve{F}_\sigma\}_{\sigma\in\Delta},
	                            \{h^{\breve{\cal F}}_{\sigma\tau}\}_{\tau\prec\sigma\in\Delta})$
	      a right $\breve{\cal A}$-module,    and
	 $\breve{\cal G}=(\{\breve{G}_\sigma\}_{\sigma\in\Delta},
	                             \{h^{\breve{\cal G}}_{\sigma\tau}\}_{\tau\prec\sigma\in\Delta})$
		  a left $\breve{\cal A}$-module,
	 all three on $\breve{Y}(\Delta)$.
  Then
     define the {\it tensor product} $\breve{\cal F}\otimes_{\breve{\cal A}}\breve{\cal G}$
	   of $\breve{\cal F}$ and $\breve{\cal  G}$ over $\breve{\cal A}$ to be
     the following (right-on-$\breve{\cal F}$, left-on-$\breve{\cal G}$) $\breve{\cal A}$-module
	 on $\breve{Y}(\Delta)$
	 $$
	   \breve{\cal F}\otimes_{\breve{\cal A}}\breve{\cal G}\;
		   :=\;  (\{\breve{F}_\sigma
		                      \otimes_{\breve{A}_\sigma}\!\breve{G}_\sigma\}_{\sigma\in\Delta}\,,\,
		              \{h_{\sigma\tau}^{\breve{\cal F}}\otimes h_{\sigma\tau}^{\breve{\cal G}}
					              \}_{\tau\prec\sigma\in\Delta})\,.
	 $$
	 	  	
  \medskip
  
  $(6)$\:\:
  Let
    $$
	   \breve{\varphi}\; :\;
	    \breve{Y}(\Delta)\, =\, \{\breve{R}_\sigma\}_{\sigma\in\Delta}\;
	     \longrightarrow\;
		      \breve{Y}(\Delta^\prime)\,
		                  =\, \{\breve{S}_{\sigma^\prime\in\Delta^\prime}\}_{\sigma^\prime\in\Delta^\prime}
    $$
	 be a toric morphism (cf.\ Definition~2.2.12),
   $\breve{\cal F}:=(\{\breve{F}_\sigma\}_{\sigma\in\Delta},
	                            \{h^{\breve{\cal F}}_{\sigma\tau}\}_{\tau\prec\sigma\in\Delta})$
	  a left ${\cal O}_{\breve{Y}(\Delta)}$-module on $\breve{Y}(\Delta)$,     and
   $\breve{\cal G}:=(\{\breve{G}_{\sigma^\prime}\}_{\sigma^\prime\in\Delta^\prime},
	            \{h^{\breve{\cal G}}_{\sigma^\prime\tau^\prime}\}
				                            _{\tau^\prime\prec\sigma^\prime\in\Delta^\prime})$
		  a left ${\cal O}_{\breve{Y}(\Delta^\prime)}$-module on $\breve{Y}(\Delta^\prime)$.
  Recall the underlying homomorphism $N\rightarrow N^\prime$ of lattices     and
	   the induced ${\Bbb R}$-linear map $N_{\Bbb R}\rightarrow N^\prime_{\Bbb R}$,
	   both denoted still by $\varphi$, (cf.\ Definition 2.2.12).

  For each $\sigma^\prime\in\Delta^\prime$,
    $$
	   \Delta^{\sigma^\prime}\; :=\; \{\sigma\in\Delta\,|\, \varphi(\sigma)\subset \sigma^\prime\}
	$$
	  is a subfan of $\Delta$ and specifies a subsystem
	   ${\cal O}_{\breve{Y}(\Delta^{\sigma^\prime})}
	     := \{\breve{R}_{\sigma}\}_{\sigma\in\Delta^{\sigma^\prime}}$
	   of ${\cal O}_{\breve{Y}(\Delta)}$.
  Through the restriction of
	 $\breve{\varphi}^\sharp: {\cal O}_{\breve{Y}(\Delta^\prime)}
	           \rightarrow {\cal O}_{\breve{Y}(\Delta)}$ 		
      to $\breve{S}_{\sigma^\prime}\rightarrow {\cal O}_{\breve{Y}(\Delta^\prime)}$,
    the ${\Bbb C}$-vector space
	   $H^0(\breve{Y}(\Delta^{\sigma^\prime}), {\cal F}|_{\breve{Y}(\Delta^{\sigma^\prime})})$
	    is rendered a left $\breve{S}_{\sigma^\prime}$-module.
  The gluing data
	  $\{h^{\breve{\cal F}}_{\sigma\tau}\}_{\tau\prec\sigma\in\Delta}$ of $\breve{\cal F}$
     gives rise to a gluing data for the $\Delta^\prime$-collection
	  $\{H^0(\breve{Y}(\Delta^{\sigma^\prime}),
		                   {\cal F}|_{\breve{Y}(\Delta^{\sigma^\prime})})\}_{\sigma^\prime\in\Delta^\prime}$
     and turn the $\Delta^\prime$-collection into
	   a left ${\cal O}_{\breve{Y}(\Delta^\prime)}$-module,
	   denoted $\breve{\varphi}_\ast\breve{\cal F}$
	   and named the {\it direct image sheaf} or {\it pushforward} of $\breve{\cal F}$ under $\breve{\varphi}$.

  For each $\sigma\in\Delta$, there exists a unique $\rho^\prime_\sigma\in\Delta^\prime$
    such that $\varphi(\sigma)\subset\rho^\prime_\sigma$ and that $\rho^\prime_\sigma \preccurlyeq\sigma^\prime$
	 for all $\sigma^\prime\in\Delta^\prime$ with $\varphi(\sigma)\subset \sigma^\prime$.
  The property $\rho^\prime_\tau\preccurlyeq\rho^\prime_\sigma$ for $\tau\prec\sigma\in\Delta$   and
   the gluing data
	  $\{h^{\breve{\cal G}}_{\sigma^\prime\tau^\prime}\}_{\tau^\prime\prec\sigma^\prime\in\Delta^\prime}$
	  of $\breve{\cal G}$
	render the $\Delta$-collection $\{\breve{G}_{\rho^\prime_\sigma}\}_{\sigma\in\Delta}$
	 a sheaf of ${\Bbb C}$-vector spaces on $\breve{Y}(\Delta)$,
	 denoted $\breve{\varphi}^{-1}\breve{\cal G}$     and
	 named the {\it inverse image sheaf}  of $\breve{\cal G}$ under $\breve{\varphi}$.
   In particular, $\breve{\varphi}^{-1}{\cal }{\cal O}_{\breve{Y}(\Delta^\prime)}$
     is a sheaf of ${\Bbb C}$-algebras on $\breve{Y}(\Delta)$    and, by construction,
   $\breve{\varphi}^{-1}\breve{\cal G}$
      is a left $\breve{\varphi}^{-1}{\cal O}_{\breve{Y}(\Delta^\prime)}$-module.	
   Since
     $\breve{\varphi}^\sharp:{\cal O}_{\breve{Y}(\Delta^\prime)}
	    \rightarrow {\cal O}_{\breve{Y}(\Delta)}$
      renders ${\cal O}_{\breve{Y}(\Delta)}$
	  a two-sided $\breve{\varphi}^{-1}{\cal O}_{\breve{Y}(\Delta^\prime)}$-module,
	one can define further a left ${\cal O}_{\breve{Y}(\Delta)}$-module
	 $\breve{\varphi}^\ast\breve{\cal G}
	     :=  {\cal O}_{\breve{Y}(\Delta)}
		        \otimes_{\breve{\varphi}^{-1}{\cal O}_{\breve{Y}(\Delta^\prime)}}
				\breve{\varphi}^{-1}{\cal G}$
	 on $\breve{Y}(\Delta)$,
	named the {\it pullback} of $\breve{\cal G}$ under $\breve{\varphi}$.
	
  Similarly, for right modules and two-sded modules on $\breve{Y}(\Delta)$ and $\breve{Y}(\Delta^\prime)$.   
}\end{sdefinition}

\medskip

\begin{sexample} {\bf [ideal sheaf and structure sheaf of closed subscheme]}\; {\rm
 Recall Definition~2.2.12.
 Then,
   an ideal sheaf ${\cal I}_{\breve{Z}}$ on $\breve{Y}(\Delta)$ and
   the associated structure sheaf ${\cal O}_{\breve{Z}}$
   of a soft noncommutative closed subscheme $\breve{Z}$ of $\breve{Y}(\Delta)$
   are both two-sided ${\cal O}_{\breve{Y}(\Delta)}$-modules.
}\end{sexample}

\medskip

\begin{sdefinition}
{\bf [invertible ${\cal O}_{\breve{Y}(\Delta)}$-module/invertible sheaf/line bundle
          on $\breve{Y}(\Delta)$]}\;
{\rm
 (Continuing Definition~3.1.)
 A {\it left} (resp.\ {\it right}, {\it two-sided}\,)
   {\it invertible ${\cal O}_{\breve{Y}(\Delta)}$-module}
   (or {\it invertible sheaf} or {\it line bundle}) on $\breve{Y}(\Delta)$
  is a left (resp.\ right, two-sided) ${\cal O}_{\breve{Y}(\Delta)}$-module
   $$
     \breve{\cal L}\;
      =\;  (\{\breve{F}_\sigma\}_{\sigma\in\Delta},
            \{h_{\sigma\tau}\}_{\tau,\, \sigma\in\Delta,\, \tau\prec\sigma})
   $$
  such that
   $\breve{F}_\sigma\simeq \breve{R}_\sigma $ as left (resp.\ right, two-sided)
    $\breve{R}_\sigma$-modules for $\sigma\in\Delta$   and
   $h_{\sigma\tau}$ is an isomorphism of left (resp.\ right, two-sided) $\breve{R}_\tau$-modules
     for $\tau,\,\sigma\in\Delta$, $\tau\prec\sigma$.
}\end{sdefinition}

\medskip

\begin{slemma} {\bf [invertible ${\cal O}_{\breve{Y}(\Delta)}$-module: basic description]}\;
 {\rm (Continuing the notation from Sec.~2.2.)}
 Let $\breve{\cal L}$ be a (left) invertible ${\cal O}_{\breve{Y}(\Delta)}$-module.
 Then $\breve{\cal L}$ can be described by a collection
  $$
   \{c_{\sigma\tau}\breve{m}_{\sigma\tau}\}_{\tau\prec\sigma\in\Delta,\,\bullet}\;
    :=\;  \left\{
	         c_{\sigma\tau} \breve{m}_{\sigma\tau}
	                  \in {\Bbb C}^\times\!\breve{M}_\tau\,
	             \left|
				   \begin{array}{l}
				    \sigma,\, \tau\in\Delta,\,
	                        \tau\prec\sigma,\, \pi(\breve{m}_{\sigma\tau})\in\tau^\perp\cap M\,, \\
					 c_{\sigma\rho}\breve{m}_{\sigma\rho}
					    =   c_{\sigma\tau}c_{\tau\rho} \breve{m}_{\sigma\tau}\breve{m}_{\tau\rho}\;\;
					 \mbox{for}\; \rho\prec\tau\prec\sigma
				  \end{array}			
				\right.			
			\!\!\!\!\right\}\!.
  $$
 Here,
  $\breve{m}_{\sigma\sigma^\prime}$ acts on
    $\breve{F}_{\sigma\sigma^\prime}\simeq \breve{R}_{\sigma\sigma^\prime}$
	as a left $\breve{R}_{\sigma\sigma^\prime}$-module homomorphism
	given by `the multiplication by $\breve{m}_{\sigma\sigma^\prime}$ from the right'.
 In particular,
   $\breve{m}_{\tau\rho}\circ \breve{m}_{\sigma\tau}
      = \breve{m}_{\sigma\tau}\breve{m}_{\tau\rho}$.
 Two such collections
  $\{c_{\sigma\tau}\breve{m}_{\sigma\tau}\}_{\tau\prec\sigma\in\Delta,\,\bullet}$ and
  $\{c^\prime_{\sigma\tau}\breve{m}^\prime_{\sigma\tau}\}_{\tau\prec\sigma\in\Delta,\,\bullet}$
  define isomorphic invertible ${\cal O}_{\breve{Y}(\Delta)}$-modules
 if and only if there exists a collection
  $$
    \{c_\sigma\breve{m}_\sigma\}_{\sigma\in\Delta,\bullet}\;
	  := \{c_\sigma\breve{m}_\sigma \in{\Bbb C}^\times\breve{M}_\sigma\,|\,
	            \sigma\in\Delta,\, \pi(\breve{m}_\sigma) \in \sigma^\perp\cap M\}
  $$
  such that
   $$
     c^\prime_{\sigma\tau}\; =\;  c_{\sigma}^{-1}c_{\sigma\tau}c_\tau
	  \hspace{2em}\mbox{and}\hspace{2em}
      \breve{m}^\prime_{\sigma\tau}\;
	    =\; \breve{m}_\sigma^{-1}\breve{m}_{\sigma\tau}\breve{m}_\tau \,.
   $$
   
\end{slemma}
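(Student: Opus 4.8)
The plan is to run the standard passage between an invertible sheaf and a \v{C}ech cocycle of transition units; the only genuinely non-formal ingredient is the determination of the unit group of the local rings $\breve{R}_\sigma={\Bbb C}\langle\breve{M}_\sigma\rangle$.

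First I would choose trivializations: invertibility of $\breve{\cal L}$ means precisely that for each $\sigma\in\Delta$ there is an isomorphism $\phi_\sigma\colon\breve{F}_\sigma\stackrel{\sim}{\rightarrow}\breve{R}_\sigma$ of left $\breve{R}_\sigma$-modules; fix one. For $\tau\prec\sigma$, transporting the gluing isomorphism $h_{\sigma\tau}$ through $\phi_\sigma$, the canonical identification $\breve{R}_\tau\otimes_{\breve{R}_\sigma}\breve{R}_\sigma\cong\breve{R}_\tau$ ($r\otimes s\mapsto r\,\iota^\sharp_{\tau\sigma}(s)$), and $\phi_\tau$, yields a left $\breve{R}_\tau$-module automorphism of $\breve{R}_\tau$. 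Any left $\breve{R}_\tau$-module endomorphism of $\breve{R}_\tau$ is right multiplication by the image of $1$, and it is an automorphism exactly when that image is a unit; hence $h_{\sigma\tau}$ corresponds to right multiplication by some $g_{\sigma\tau}\in\breve{R}_\tau^{\times}$. Since the composite of right multiplication by $a$ followed by that by $b$ is right multiplication by $ab$, the coherence constraint of Definition~3.1 for $\rho\prec\tau\prec\sigma$ reads, on $\breve{R}_\rho$, as $g_{\sigma\rho}=g_{\sigma\tau}g_{\tau\rho}$.

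Next comes the crux: identifying $\breve{R}_\sigma^{\times}$. By Example~1.7 the edge-path monoid $\breve{M}$ is the free group on $n$ generators, which is bi-orderable; restricting a bi-invariant order to the submonoid $\breve{M}_\sigma$ and comparing the largest and smallest elements of supports (no cancellation can occur at either extreme of a product) shows that ${\Bbb C}\langle\breve{M}_\sigma\rangle$ has no zero divisors and that $ab=1$ forces $a,b$ to be single monomials; hence every unit of $\breve{R}_\sigma$ has a unique expression $c\,\breve{m}$ with $c\in{\Bbb C}^{\times}$ and $\breve{m}\in\breve{M}_\sigma^{\ast}$. Writing $g_{\sigma\tau}=c_{\sigma\tau}\breve{m}_{\sigma\tau}$ this way, and using that $\pi$ maps $\breve{M}_\tau^{\ast}$ onto the unit submonoid $\tau^{\perp}\cap M$ of $M_\tau$ (surjectivity of $\pi\colon\breve{M}_\tau\to M_\tau$ together with the admissibility conditions of Definition~2.2.4), we get $\pi(\breve{m}_{\sigma\tau})\in\tau^{\perp}\cap M$; substituting into $g_{\sigma\rho}=g_{\sigma\tau}g_{\tau\rho}$ and separating the (uniquely determined) scalar and monomial parts gives the asserted cocycle relation $c_{\sigma\rho}\breve{m}_{\sigma\rho}=c_{\sigma\tau}c_{\tau\rho}\,\breve{m}_{\sigma\tau}\breve{m}_{\tau\rho}$. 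Conversely, any collection with the stated properties defines an invertible ${\cal O}_{\breve{Y}(\Delta)}$-module by putting $\breve{F}_\sigma:=\breve{R}_\sigma$ and letting $h_{\sigma\tau}$ be right multiplication by $c_{\sigma\tau}\breve{m}_{\sigma\tau}$, the cocycle relation being exactly the coherence of Definition~3.1.

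Finally, the isomorphism criterion. An isomorphism $f\colon\breve{\cal L}\to\breve{\cal L}'$ is a family of left-module isomorphisms $f_\sigma$, and conjugating by the fixed trivializations turns $f_\sigma$ into right multiplication on $\breve{R}_\sigma$ by a unit, which by the preceding paragraph is $c_\sigma\breve{m}_\sigma$ with $\breve{m}_\sigma\in\breve{M}_\sigma^{\ast}$, hence $\pi(\breve{m}_\sigma)\in\sigma^{\perp}\cap M$. The compatibility square of Definition~3.1 for $f$, read on $\breve{R}_\tau$ as an identity of right multiplications, becomes $c_\sigma\breve{m}_\sigma\cdot c'_{\sigma\tau}\breve{m}'_{\sigma\tau}=c_{\sigma\tau}\breve{m}_{\sigma\tau}\cdot c_\tau\breve{m}_\tau$ in $\breve{R}_\tau$; since ${\Bbb C}^{\times}$ is central, uniqueness of the scalar/monomial splitting gives $c'_{\sigma\tau}=c_\sigma^{-1}c_{\sigma\tau}c_\tau$ and $\breve{m}'_{\sigma\tau}=\breve{m}_\sigma^{-1}\breve{m}_{\sigma\tau}\breve{m}_\tau$, and running the computation backwards shows that any such coboundary $\{c_\sigma\breve{m}_\sigma\}$ does produce an isomorphism of the associated modules. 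The one step that is not diagram-chasing is the computation of $\breve{R}_\sigma^{\times}$, which rests on the orderability of the free group $\breve{M}$; I expect that to be the heart of the proof.
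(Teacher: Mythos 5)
Your argument is correct and follows the same overall route as the paper's: present invertibility as the existence of local trivializations $\breve{F}_\sigma\simeq\breve{R}_\sigma$, identify left $\breve{R}_\tau$-module automorphisms of $\breve{R}_\tau$ with right multiplication by a unit, read off the cocycle relation from the coherence condition, and reduce both the form of the transition data and the coboundary criterion to the computation of the unit group $\breve{R}_\sigma^\times$. The one place you go beyond the paper is exactly the place the paper is silent: the paper merely \emph{observes} that the units of ${\Bbb C}\langle\breve{M}\rangle$ are ${\Bbb C}^\times\breve{M}$ and that the units of $\breve{R}_\sigma$ are ${\Bbb C}^\times\!(\breve{M}_\sigma\cap\pi^{-1}(\sigma^\perp\cap M))$, whereas you actually prove it, by identifying $\breve{M}$ with the free group $F_n$, invoking its bi-orderability, and using the max/min-of-support argument to show that any solution of $ab=1$ in the monoid algebra of a bi-ordered monoid forces $a$ and $b$ to be scalar multiples of single monomials. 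That argument is sound (it is the standard two-unique-products argument; one only needs that a bi-invariant order on $F_n$ restricts to one on the submonoid $\breve{M}_\sigma$), and together with admissibility condition (2) of Definition~2.2.4 it correctly pins down $\breve{M}_\sigma^\ast=\breve{M}_\sigma\cap\pi^{-1}(\sigma^\perp\cap M)$. So you have not taken a different route so much as supplied a nontrivial justification for the observation the paper leaves to the reader; the cocycle and coboundary bookkeeping in the remainder of your proof agrees with the paper's.
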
	

\smallskip

\begin{proof}
 Observe
   that the multiplicative monoid of invertible elements of the monoid algebra ${\Bbb C}\langle\breve{M}\rangle$
     is given by ${\Bbb C}^\times\!\breve{M}$     and
   that the multiplicative monoid of invertible elements of the monoid algebra $\breve{R}_\sigma$, $\sigma\in\Delta$,
	 is contained in ${\Bbb C}^\times\breve{M}$  and
	 is given by ${\Bbb C}^\times\!\breve{M}_\sigma\cap\pi^{-1}(\sigma^\perp\cap M)$.
 It follows from Definition~3.3 that
  an invertible ${\cal O}_{\breve{Y}(\Delta)}$-module is determined by gluings of 	
  the collection $\{\breve{R}_\sigma\}_{\sigma\in\Delta}$ for each pair $\tau\prec\sigma$
  as $\breve{R}_\tau$-modules.
 This gives the gluing data
   $\{c_{\sigma\tau}\breve{m}_{\sigma\tau}\}_{\tau\prec\sigma\in\Delta,\bullet}$
   in the Statement.
 Different choices of isomorphisms $\breve{F}_\sigma\simeq \breve{R}_\sigma$
  (i.e.\ local trivializations of ${\cal L}$)
  define isomorphic invertible ${\cal O}_{\breve{Y}(\Delta)}$-modules.
   
\end{proof}
	
\medskip

\begin{sproposition} {\bf [existence of invertible sheaf after passing to softening]}\;
 Let $\breve{Y}(\Delta)$ be an ($n$-dimensional) noncommutative toric scheme associated to a fan $\Delta$.
 Recall the built-in inclusion $Y(\Delta)\subset \breve{Y}(\Delta)$ and
 let ${\cal L}$ be an invertible ${\cal O}_{Y(\Delta)}$-module on $Y(\Delta)$.
 Then,
  there exists a softening $\breve{Y}^\prime(\Delta)\rightarrow \breve{Y}(\Delta)$
      of $\breve{Y}(\Delta)$
   such that ${\cal L}$, now on $Y(\Delta)\subset \breve{Y}^\prime(\Delta)$,
    extends to an invertible ${\cal O}_{\breve{Y}^\prime(\Delta)}$-module
	on $\breve{Y}^\prime(\Delta)$.
\end{sproposition}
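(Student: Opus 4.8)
The plan is to describe $\mathcal L$ by commutative toric gluing data indexed over the face poset of $\Delta$, to observe that this $1$-cocycle is automatically a coboundary because $\Delta$ contains the zero cone $\mathbf{0}$, to lift the resulting characters set-theoretically to $\breve{M}$, and finally to absorb the lifted transition data into a softening by means of Proposition~2.2.10.

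First I would record the gluing data of $\mathcal L$. Each chart $U_\sigma=\Spec{\Bbb C}[M_\sigma]$ is affine toric, so $\Pic(U_\sigma)=0$, hence $\mathcal L|_{U_\sigma}$ is trivial and, exactly as in the commutative analogue of Lemma~3.4, $\mathcal L$ is described by data $\{c_{\sigma\tau}\,m_{\sigma\tau}\}_{\tau\prec\sigma\in\Delta}$ with $c_{\sigma\tau}\in{\Bbb C}^\times$, $m_{\sigma\tau}\in\tau^\perp\cap M$, satisfying $c_{\sigma\rho}=c_{\sigma\tau}c_{\tau\rho}$ and $m_{\sigma\rho}=m_{\sigma\tau}+m_{\tau\rho}$ for $\rho\prec\tau\prec\sigma$. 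Since $\mathbf{0}\preccurlyeq\sigma$ for every $\sigma\in\Delta$, the element $m_{\sigma\mathbf{0}}$ is defined for each $\sigma\neq\mathbf{0}$; put $u_\sigma:=m_{\sigma\mathbf{0}}\in M$ for $\sigma\neq\mathbf{0}$ and $u_{\mathbf{0}}:=0$. The cocycle identity with $\rho=\mathbf{0}$ then forces $m_{\sigma\tau}=u_\sigma-u_\tau$ for all $\tau\prec\sigma$ (this is the inverse-$\Delta$-system incarnation of the classical fact that a line bundle on a toric variety comes from a support function). Now choose, for each $\sigma\in\Delta$, an arbitrary lift $\breve{u}_\sigma\in\breve{M}$ of $u_\sigma$ under $\pi$, with $\breve{u}_{\mathbf{0}}:=1$, and set $\breve{m}_{\sigma\tau}:=\breve{u}_\sigma\,\breve{u}_\tau^{\,-1}\in\breve{M}$. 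Then $\pi(\breve{m}_{\sigma\tau})=u_\sigma-u_\tau=m_{\sigma\tau}\in\tau^\perp\cap M$, and since $\breve{u}_\tau^{\,-1}\breve{u}_\tau=1$ in $\breve{M}$ the identity $\breve{m}_{\sigma\tau}\,\breve{m}_{\tau\rho}=\breve{m}_{\sigma\rho}$ holds verbatim in $\breve{M}$ for all $\rho\prec\tau\prec\sigma$.

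Next I would enlarge a chosen inverse $\Delta$-system $\{\breve{M}_\sigma\}_{\sigma\in\Delta}$ underlying $\breve{Y}(\Delta)$ so that it accommodates these $\breve{m}_{\sigma\tau}$. For a non-maximal $\tau$ let $\breve{S}_\tau:=\langle\,\breve{m}_{\sigma\tau}\mid\sigma\in\Delta,\;\sigma\succ\tau\,\rangle$, a finitely generated submonoid of $\breve{M}$ with $\pi(\breve{S}_\tau)\subset\tau^\perp\cap M\subset M_\tau$; for a maximal $\sigma$ put $\breve{S}_\sigma:=\{1\}$ (no $\breve{m}_{\sigma^\prime\sigma}$ occurs then, since a maximal cone has no proper coface in $\Delta$). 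Applying Proposition~2.2.10 to $\{\breve{M}_\sigma\}_{\sigma\in\Delta}$ and $\{\breve{S}_\sigma\}_{\sigma\in\Delta}$ produces an augmentation $\{\breve{M}^\prime_\sigma\}_{\sigma\in\Delta}$ with $\breve{S}_\sigma\subset\breve{M}^\prime_\sigma$ for all $\sigma$. Because $\breve{S}_\sigma=\{1\}$ on maximal cones, the construction there leaves $\breve{M}^\prime_\sigma=\breve{M}_\sigma$ on every maximal (hence fundamental) chart and keeps $\breve{M}^\prime_{\mathbf{0}}=\breve{M}$; thus the soft noncommutative toric scheme $\breve{Y}^\prime(\Delta)$ built from $\{\breve{M}^\prime_\sigma\}_{\sigma\in\Delta}$ is a softening of $\breve{Y}(\Delta)$ in the sense of Definition~2.2.15.

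Finally, on $\breve{Y}^\prime(\Delta)$ the collection $\{c_{\sigma\tau}\,\breve{m}_{\sigma\tau}\}_{\tau\prec\sigma\in\Delta}$ meets every requirement in the description of Lemma~3.4: each $c_{\sigma\tau}\breve{m}_{\sigma\tau}$ lies in ${\Bbb C}^\times\breve{M}^\prime_\tau$ with $\pi(\breve{m}_{\sigma\tau})\in\tau^\perp\cap M$, so by admissibility of $\breve{M}^\prime_\tau$ to $\tau$ it is a unit of $\breve{R}^\prime_\tau$, while the cocycle identities for the $c$'s and the $\breve{m}$'s were verified above. Hence this data defines an invertible ${\cal O}_{\breve{Y}^\prime(\Delta)}$-module $\breve{\cal L}$, and applying $\pi:\breve{M}^\prime_\sigma\twoheadrightarrow M_\sigma$ to the gluing data — that is, restricting $\breve{\cal L}$ along the built-in embedding $Y(\Delta)\hookrightarrow\breve{Y}^\prime(\Delta)$ — recovers exactly $\{c_{\sigma\tau}\,m_{\sigma\tau}\}$, so $\breve{\cal L}|_{Y(\Delta)}\cong\mathcal L$. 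The one genuinely non-routine point is the coboundary observation in the second paragraph: it is what makes a blind set-theoretic lift of the $u_\sigma$ automatically satisfy the \emph{exact}, non-abelian cocycle identity in $\breve{M}$; the remaining care needed is simply to run Proposition~2.2.10 with trivial $\breve{S}_\sigma$ on maximal cones, so that the fundamental charts — and hence the status of $\breve{Y}^\prime(\Delta)$ as a softening — are left untouched.
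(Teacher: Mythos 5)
Your proof is correct and rests on the same mechanism as the paper's: produce a potential $\sigma\mapsto u_\sigma$ in $M$ with $m_{\sigma\tau}=u_\sigma-u_\tau$, lift it to $\breve{u}_\sigma\in\breve{M}$, set $\breve{m}_{\sigma\tau}:=\breve{u}_\sigma\breve{u}_\tau^{\,-1}$ so the cocycle identity holds exactly in $\breve{M}$, and then invoke Proposition~2.2.10 with $\breve{S}_\sigma$ trivial on maximal cones to absorb the transition elements into the nonmaximal charts and thereby obtain a softening. The one place you differ is where the potential comes from: the paper reads off $\{m_\sigma\}_{\sigma\in\Delta(n)}$ from the $\mathbf{T}$-Cartier support-function description of ${\cal L}$ and propagates it to lower cones by an arbitrary choice of adjacent maximal cone, whereas you obtain $u_\sigma:=m_{\sigma\mathbf{0}}$ directly from the cocycle by evaluating against the zero cone, which is a face of every $\sigma$. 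Your packaging is slightly more intrinsic to the inverse-$\Delta$-system formalism, and it incidentally sidesteps a small wrinkle in the paper's write-up: the paper asks for lifts $\breve{m}_\sigma\in\breve{M}_\sigma$, but $m_\sigma$ need not lie in $M_\sigma=\sigma^\vee\cap M$ (already for ${\cal O}_{\mathbf{P}^1}(1)$ on the chart at the fixed point over $0$), so the lift must in general be taken in $\breve{M}$, as you do. Both routes land in the same place, and the key softening step via Proposition~2.2.10 is identical.
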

   
\medskip

\begin{proof}
 Under Assumption~2.2.2,
  $Y(\Delta)$ is smooth and thus ${\cal L}\simeq{\cal O}_{Y(\Delta)}(D)$
  for some ${\Bbb T}^n$-invariant Cartier divisor $D$ on $Y(\Delta)$   and
  can be specified by a collection $\{m_\sigma\in M\}_{\sigma\in\Delta(n)}$,
    where $m_\sigma\in M$ and
    $\Delta(n)$ is the collection of maximal cones in $\Delta$,
  that satisfies $m_\sigma^{-1} m_\tau\in (\sigma\cap\tau)^\perp\cap M$,
   for $\sigma, \tau\in\Delta(n)$.
 (Cf. [Fu: Sec.\ 3.4], with the monoid $M$ here presented multiplicatively for convenience.)
 For each $\tau\in\Delta(k)$, $k\le n-1$,
  fix an $\sigma\in\Delta(n)$ such that $\tau\prec\sigma$ and set $m_\tau = m_\sigma$.
 This extend $\{m_\sigma\}_{\sigma\in\Delta(n)}$ to
  a collection $\{m_\sigma\}_{\sigma\in\Delta}$
    such that $m_\sigma^{-1}m_\tau \in (\tau\cap \sigma)^\perp\cap M$.
    
 Let $\{\breve{M}_\sigma\}_{\sigma\in\Delta}$ be the underlying inverse $\Delta$-system of monoids
    associated to $\breve{Y}(\Delta)$     and,
 for each $\sigma\in\Delta$, fix an $\breve{m}_\sigma\in \breve{M}_\sigma$ 	
  such that $\pi(\breve{m}_\sigma)=m_\sigma$.
 Set
  $$
   \{\breve{m}_{\sigma\tau}\}_{\tau\prec\sigma\in\Delta}\;
      = \; \{\breve{m}_\sigma^{-1}\breve{m}_\tau \}_{\tau\prec\sigma\in\Delta}\,.
  $$
 Then,
  $$
   \pi(\breve{m}_{\sigma\tau})\;\in\; \tau^\perp\cap M\:\:\mbox{for $\tau\prec\sigma\in\Delta$}\,,
    \hspace{1em}\mbox{and}\hspace{1em}
   \breve{m}_{\sigma\tau}\,\breve{m}_{\tau\rho}\;=\; \breve{m}_{\sigma\rho}\:\:
    \mbox{for $\rho\prec\tau\prec\sigma\in\Delta$}\,.
  $$
 This is almost the gluing data in Lemma~3.4 for an invertible ${\cal O}_{\breve{Y}(\Delta)}$-module
  {\it except} that in general $\breve{m}_{\sigma\tau}\notin \breve{M}_\tau$.
 To remedy this, let
   $$
    \begin{array}{lclcl}
	 \breve{S}_\sigma   & =  & \mbox{the empty set} && \mbox{for $\sigma\in\Delta(n)$}\,,\\[1.2ex]
     \breve{S}_\tau       & =
	   & \langle\breve{m}_{\sigma\tau}\,|\, \tau\prec\sigma\in\Delta \rangle
	   && \mbox{for $\tau\in\Delta(k)$, $k\le n-1$}\,.	
    \end{array}
   $$
 This defines a collection of $\Delta$-indexed finitely generated submonoids of $\breve{M}$.
 It follows from Proposition~2.2.10 that
 one can augment $\{\breve{M}_\sigma\}_{\sigma\in\Delta}$
  to an inverse $\Delta$-system $\{\breve{M}^\prime_\sigma\}_{\sigma\in\Delta}$
  of submonoids of $\breve{M}$ such that $\breve{S}_\sigma\subset \breve{M}^\prime_\sigma$.
 Furthermore,
  since $\breve{S}_\sigma=$ empty set for $\sigma\in\Delta(n)$,
  it follows from the proof of Proposition~2.2.10 that
  it can be made that $\breve{M}^\prime_\sigma=\breve{M}_\sigma$ for $\sigma\in\Delta(n)$.
 Thus,
  $\{\breve{M}^\prime\}_{\sigma\in\Delta}$ defines a softening $\breve{Y}^\prime(\Delta)$
    of $\breve{Y}(\Delta)$    and
  ${\cal L}$ extends to an invertible ${\cal O}_{\breve{Y}^\prime(\Delta)}$-module
   $\breve{\cal L}$ on $\breve{Y}^\prime(\Delta)$.
 
\end{proof}
   
\bigskip

Since a soft noncommutative toric scheme associated to $\Delta$ always exists
   (cf.\  Theorem~2.2.5,  Proposition~2.2.9, Definition~2.2.12),
 Proposition~3.5 can be rephrased as:

\bigskip

\noindent
{\bf Proposition 3.5$^\mathbf{\prime}$. [existence of invertible sheaf]}\;
{\it
 Let ${\cal L}$ be an invertible sheaf on the smooth toric variety $Y(\Delta)$.
 Then there exists a soft noncommutative toric scheme $\breve{Y}(\Delta)\supset Y(\Delta)$ associated to $\Delta$
 such that ${\cal L}$ extends to an invertible sheaf $\breve{\cal L}$ on $\breve{Y}(\Delta)$.
} 

\bigskip

Note that the pullback of an invertible sheaf under a morphism of noncommutative toric schemes is an invertible sheaf.
In particular,
 any invertible sheaf on $\breve{Y}(\Delta)$ pulls back to invertible sheaves on softenings of $\breve{Y}(\Delta)$.

\bigskip

\begin{flushleft}
{\bf Twisted sections of an invertible sheaf on $\breve{Y}(\Delta)$}
\end{flushleft}
Let ${\cal L}$ be an invertible sheaf on the $n$-dimensional smooth toric variety $Y(\Delta)$.
Then ${\cal L}$ is isomorphic to a $\mathbf{T}$-Cartier divisor
  ${\cal O}_{Y(\Delta)}(D)={\cal O}_{Y(\Delta)}(\sum_{\tau\in\Delta(1)} a_\tau D_\tau)$,
	   where $\Delta(1)\subset \Delta$ is the set of rays (i.e. $1$-dimensional cones) in $\Delta$,
            $D_\tau$	is the $\mathbf{T}$-invariant Weil divisor on $Y(\Delta)$ associated to $\tau\in\Delta(1)$, and
	        $a_\tau\in{\Bbb Z}$.
The divisor $D$ determines an $m_\sigma\in M$ for each maximal cone $\sigma\in\Delta(n)$:
  $$
     \langle m_\sigma, v_\tau \rangle\;=\; -\,a_\tau\,,\hspace{1em}
	 \mbox{for all  $\tau\in\Delta(1)$ such that $\tau\prec\sigma$}\,.
  $$
 Here $v_\tau\in\tau\cap N$ is the first lattice point on the ray $\tau$.
Writing the commutative monoid $M$ multiplicatively as a multiplicatively closed subset of $R_\mathbf{0}={\Bbb C}[M]$
     and passing to the isomorphism ${\cal L}\simeq {\cal O}_{Y(\Delta)}(D)$,
  then
    ${\cal L}(U_\sigma)= R_\sigma\cdot m_\sigma = {\Bbb C}[\sigma^\vee\cap M]\cdot m_\sigma$,
          $\sigma\in\Delta(n)$.
It follows that, if defining the (possibly empty) {\it polytope} in $M_{\Bbb R}$ {\it associated to $D$}
 $$
   P_D\; :=\;  \bigcap_{\sigma\in\Delta(n)} \sigma^\vee\cdot m_\sigma\;
      =\;    \{u\in M_{\Bbb R}\,|\, \langle u, v_\tau \rangle \ge -a_\tau,\;\mbox{for all $\tau\in \Delta(1)$}\}\;
	  \subset\; M_{\Bbb R},
 $$
 then
 the ${\Bbb C}$-vector space $H^0(Y(\Delta), {\cal L})$ of sections of ${\cal L}$ is realized as
  $$
    H^0(Y(\Delta), {\cal O}_{Y(\Delta)}(D))\; =\;
	 \bigoplus_{u\in P_D\cap M}{\Bbb C}\cdot u\,.
  $$
(Cf.\ [Fu: Sec.~3.3 \& Sec.~3.4].)			

\bigskip

\begin{sdefinition} {\bf [twisted section of invertible sheaf]}\; {\rm
 Let
   $\breve{\cal L}
      = (\{\breve{F}_\sigma\}_{\sigma\in\Delta}, \{h_{\sigma\tau}\}_{\tau\prec\sigma\in\Delta} )$
   be a (left) invertible sheaf on a soft noncommutative toric scheme
   $\breve{Y}(\Delta)=\{\breve{R}_\sigma\}_{\sigma\in\Delta}$.
 Two local sections $\breve{s}_1,\,\breve{s}_2\in\breve{F}_\sigma$, $\sigma\in\Delta$,
   are said to be {\it equivalent}, in notation, $\breve{s}_1\sim \breve{s}_2$,
   if there exists an invertible element $\breve{r}\in \breve{R}_\sigma^{\,\ast}$
    such that $\breve{s}_1 = \breve{r}\breve{s}_2$.
  This is clearly an equivalence relation on $\breve{F}_\sigma$, $\sigma\in\Delta$.
 A {\it twisted section} of $\breve{\cal L}$ is a collection
    $$
	   \breve{s}= \{\breve{s}_\sigma\}_{\sigma\in\Delta}
	$$
	of local sections $\breve{s}_\sigma\in \breve{F}_\sigma$ of $\breve{\cal L}$
  such that
    $$
	  h_{\sigma\tau}(\breve{s}_\sigma)\; \sim\; \breve{s}_\tau\,,\;\;
	  \mbox{for $\tau\prec\sigma\in\Delta$}\,.
	$$
	
 Fix a local trivializing isomorphism $\breve{\cal F_\sigma}\simeq \breve{R}_\sigma$
   of left $\breve{R}_\sigma$-modules, $\sigma\in\Delta$.
 Then, $\breve{s}$ is specifoed uniquely by $\{\breve{r}_\sigma\}_{\sigma\in\Delta}$,
   with $\breve{r}\in\breve{R}_\sigma$.
 We will called $\{\breve{r}_\sigma\}_{\sigma\in\Delta}$
   the {\it presentation} of $\breve{s}$ {\it with respect to the local trivialization} of $\breve{\cal L}$.
 Different choices of local trivializations of $\breve{\cal L}$ give presentations of $\breve{s}$ that differ
  by a right multiplication of unit (i.e.\ invertible element) chart by chart.
}\end{sdefinition}

\medskip

\begin{slemma} {\bf [twisted section under pullback]}\;
 Let
   $\phi: \breve{Y}^\prime(\Delta)\rightarrow \breve{Y}(\Delta)$
       be a morphism of soft noncommutative toric schemes,
   $\breve{\cal L}$ be an invertible sheaf on $\breve{Y}(\Delta)$,    and
   $\breve{s}$ be a twisted section of $\breve{\cal L}$.
 Then,
   $\phi^\ast\breve{s}$ is a twisted section
     of the pullback $\phi^\ast\breve{\cal L}$ of $\breve{\cal L}$ to $\breve{Y}^\prime(\Delta)$.
   In particular, a twisted section pulls back to a twisted section under softening.
\end{slemma}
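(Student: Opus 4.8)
The plan is to unwind the definition of pullback (Definition~3.1(6)) together with the chart description of invertible sheaves and their twisted sections (Lemma~3.4, Definition~3.6), and then to observe that the entire statement reduces to the elementary fact that a homomorphism of monoid algebras carries invertible elements to invertible elements. Write $\breve{\varphi}^\sharp:{\cal O}_{\breve{Y}(\Delta)}\rightarrow{\cal O}_{\breve{Y}^\prime(\Delta)}$ for the comorphism of $\phi$, realized chart by chart as ${\Bbb C}$-algebra homomorphisms $\breve{\varphi}^\sharp:\breve{R}_{\rho^\prime_\sigma}\rightarrow\breve{R}^\prime_\sigma$, where $\rho^\prime_\sigma$ is the smallest cone containing $\varphi(\sigma)$; for a softening morphism $\varphi=\mathrm{id}$, so $\rho^\prime_\sigma=\sigma$ and $\breve{\varphi}^\sharp$ is just the inclusion $\breve{R}_\sigma\hookrightarrow\breve{R}^\prime_\sigma$, and I would keep that case in mind throughout, the general toric morphism changing only the indexing.

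First I would fix local trivializing isomorphisms $\breve{F}_\sigma\simeq\breve{R}_{\rho^\prime_\sigma}$ of $\breve{\cal L}$. By Lemma~3.4 the gluing isomorphisms of $\breve{\cal L}$ then become right multiplication by elements $c_{\sigma\tau}\breve{m}_{\sigma\tau}$ with $\breve{m}_{\sigma\tau}$ invertible and $\pi(\breve{m}_{\sigma\tau})\in\tau^\perp\cap M$, and the twisted section $\breve{s}$ is recorded by its presentation $\{\breve{r}_\sigma\}_{\sigma\in\Delta}$; the twisted-section condition of Definition~3.6 then reads: for every $\tau\prec\sigma\in\Delta$ there is a unit $\breve{u}_{\sigma\tau}\in\breve{R}_\tau^{\,\ast}$ with
$$
 \iota_{\tau\sigma}^\sharp(\breve{r}_\sigma)\cdot c_{\sigma\tau}\breve{m}_{\sigma\tau}\;=\;\breve{u}_{\sigma\tau}\cdot\breve{r}_\tau\,.
$$
Next I would compute $\phi^\ast\breve{\cal L}$: by Definition~3.1(6) its chart over $\sigma$ is $\breve{R}^\prime_\sigma\otimes_{\breve{R}_{\rho^\prime_\sigma}}\breve{F}_{\rho^\prime_\sigma}$, which the chosen trivialization identifies with $\breve{R}^\prime_\sigma$, and the pulled-back gluing isomorphisms are right multiplication by $c_{\sigma\tau}\,\breve{\varphi}^\sharp(\breve{m}_{\sigma\tau})$. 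Since $\breve{\varphi}^\sharp$ is a monoid-algebra homomorphism it sends the invertible element $\breve{m}_{\sigma\tau}$ to an invertible element of $\breve{R}^\prime_\tau$ (preservation of two-sided inverses), with $\pi$-image still in the appropriate $\perp$-lattice; so by Lemma~3.4 this data again defines an invertible sheaf, $\phi^\ast\breve{\cal L}$ is invertible, and in the induced trivialization $\phi^\ast\breve{s}$ has presentation $\{\breve{\varphi}^\sharp(\breve{r}_\sigma)\}_{\sigma\in\Delta}$.

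Finally, applying the ring homomorphism $\breve{\varphi}^\sharp$ to the displayed identity and using the compatibility $\breve{\varphi}^\sharp\circ\iota_{\tau\sigma}^\sharp=\iota_{\tau\sigma}^{\prime\sharp}\circ\breve{\varphi}^\sharp$ of a morphism of sheaves with restrictions, one gets
$$
 \iota_{\tau\sigma}^{\prime\sharp}\big(\breve{\varphi}^\sharp(\breve{r}_\sigma)\big)\cdot c_{\sigma\tau}\breve{\varphi}^\sharp(\breve{m}_{\sigma\tau})\;=\;\breve{\varphi}^\sharp(\breve{u}_{\sigma\tau})\cdot\breve{\varphi}^\sharp(\breve{r}_\tau)\,,
$$
with $\breve{\varphi}^\sharp(\breve{u}_{\sigma\tau})$ a unit in $\breve{R}^\prime_\tau$. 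Read through the gluing description of $\phi^\ast\breve{\cal L}$ obtained above, the left-hand side is $h^\prime_{\sigma\tau}\big((\phi^\ast\breve{s})_\sigma\big)$, so the identity says exactly that $h^\prime_{\sigma\tau}\big((\phi^\ast\breve{s})_\sigma\big)\sim(\phi^\ast\breve{s})_\tau$ for all $\tau\prec\sigma$, i.e.\ $\phi^\ast\breve{s}$ is a twisted section of $\phi^\ast\breve{\cal L}$; specializing $\phi$ to a softening morphism gives the last assertion. I expect the only real obstacle to be bookkeeping: checking that the tensor-product definition of the pullback genuinely converts the gluing datum $c_{\sigma\tau}\breve{m}_{\sigma\tau}$ into $c_{\sigma\tau}\breve{\varphi}^\sharp(\breve{m}_{\sigma\tau})$ (compatibly with the chosen trivializations), and tracking the index shift $\sigma\mapsto\rho^\prime_\sigma$ when $\varphi$ is not the identity on fans — the substantive input, that $\breve{\varphi}^\sharp$ preserves units, being immediate.
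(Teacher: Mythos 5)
Your proof is correct and follows the same route the paper takes: the paper's entire argument is the single observation that $\phi^\sharp$ carries invertible elements to invertible elements chart by chart, and your detailed unwinding — writing the twisted-section condition as $\iota_{\tau\sigma}^\sharp(\breve{r}_\sigma)\cdot c_{\sigma\tau}\breve{m}_{\sigma\tau}=\breve{u}_{\sigma\tau}\cdot\breve{r}_\tau$ and then applying $\breve{\varphi}^\sharp$ — is exactly the bookkeeping that the paper compresses into that sentence.
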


\medskip

\begin{proof}
 This follows from the fact that $\phi^\sharp$ takes an invertible element to an invertible element chart by chart.

\end{proof}

\medskip

\begin{sproposition} {\bf [extension to twisted section after passing to softening]}\;
 Let
   $\breve{\cal L}$ be an invertible sheaf on $\breve{Y}(\Delta)$,
   ${\cal L}$ be the restriction of $\breve{\cal L}$ to $Y(\Delta)\subset \breve{Y}(\Delta)$,    and
   $s_1,\,\cdots\,,\,s_k\in  H^0(Y(\Delta), {\cal L})$ be sections of ${\cal L}$ on $Y(\Delta)$.
 Then, there exists a softening $\breve{Y}^\prime(\Delta)\rightarrow \breve{Y}(\Delta)$
   such that all $s_1\,\cdots\,,\, s_k$ of ${\cal L}$ extend to twisted sections of the pullback $\breve{\cal L}^\prime$
   of $\breve{\cal L}$ to $\breve{Y}^\prime(\Delta)$.
\end{sproposition}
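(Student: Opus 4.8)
\emph{Strategy.} The plan is to write down the monomial data of the desired twisted sections chart by chart, and then to enlarge only the non-maximal charts, via Proposition~2.2.10, so that all the finitely many noncommutative monomials occurring become available. Since $Y(\Delta)$ is smooth (Assumption~2.2.2), ${\cal L}=\breve{\cal L}|_{Y(\Delta)}\simeq{\cal O}_{Y(\Delta)}(D)$ for a $\mathbf{T}$-Cartier divisor $D=\sum_{\tau\in\Delta(1)}a_\tau D_\tau$; changing the local trivializations of $\breve{\cal L}$ (Lemma~3.4) I may assume that the commutatization $m_{\sigma\tau}:=\pi(\breve{m}_{\sigma\tau})$ of the transition data of $\breve{\cal L}$ is the standard monomial cocycle $m_{\sigma\tau}=m_\sigma m_\tau^{-1}$ of a system $\{m_\sigma\in M\}_{\sigma\in\Delta}$ obtained from $D$ on the maximal cones (as in the proof of Proposition~3.5) and extended over all cones, with trivial scalar factors $c_{\sigma\tau}$. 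Then $u-m_\sigma\in M_\sigma$ for every $u\in P_D\cap M$ and every $\sigma\in\Delta$, each $s_j$ is a ${\Bbb C}$-combination $s_j=\sum_u c^{(j)}_u\,u$ over $u\in P_D\cap M$, and its local expression over $U_\sigma$ is $r^{(j)}_\sigma=\sum_u c^{(j)}_u\,(u-m_\sigma)$, whose monomials are indexed consistently (via multiplication by $m_{\sigma\tau}$) by $u$ in the finite set $\Sigma:=\bigcup_j\Supp(s_j)$.

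\emph{Compatible noncommutative lifts.} I next choose, for each $u\in\Sigma$, a compatible family of monomial lifts $\breve{v}_{\sigma,u}\in\breve{M}_\sigma$ of $u-m_\sigma$, one for each $\sigma\in\Delta$, with the crucial property that for $\tau\prec\sigma$ the element $\breve{r}_{\sigma\tau}:=h_{\sigma\tau}(\breve{v}_{\sigma,u})\,\breve{v}_{\tau,u}^{\,-1}\in\breve{M}$ depends only on the pair $(\sigma,\tau)$ and not on $u$. These are produced by propagation through the wall-crossing structure of $\Delta$: at the reference cone $\sigma_0$ generated by $e_1,\dots,e_n$, take $\breve{v}_{\sigma_0,u}$ to be the normal-form word $z_1^{\,a_1}\cdots z_n^{\,a_n}$ lifting $u-m_{\sigma_0}\in M_{\sigma_0}\cong{\Bbb N}^n$, which lies in $\breve{M}_{\sigma_0}=\langle z_1,\dots,z_n\rangle$; passing to a maximal cone $\sigma'$ adjacent across a wall $\tau$ to an already-treated maximal cone $\sigma$, define $\breve{v}_{\sigma',u}$ to be the word in the free generators of $\breve{M}_{\sigma'}$ forced by requiring the $\sigma\leftrightarrow\sigma'$ transition over $U_\tau$ to be mediated by a single $u$-free monomial; and along a non-maximal cone $\tau$ push $\breve{v}_{\tau,u}$ forward from a chosen adjacent maximal cone. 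Setting $\breve{s}^{(j)}_\sigma:=\sum_u c^{(j)}_u\,\breve{v}_{\sigma,u}\in\breve{R}_\sigma$, one then gets $h_{\sigma\tau}(\breve{s}^{(j)}_\sigma)=\breve{r}_{\sigma\tau}\cdot\breve{s}^{(j)}_\tau$ for all $j$ and all $\tau\prec\sigma$ — because the $u$-indexed monomials $u-m_\tau$ of $r^{(j)}_\tau$ are pairwise distinct — while $\pi(\breve{s}^{(j)}_\sigma)=r^{(j)}_\sigma$; so $\{\breve{s}^{(j)}_\sigma\}_\sigma$ will be a twisted section extending $s_j$ once each $\breve{r}_{\sigma\tau}$ has been made a unit in the local ring over $\tau$.

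\emph{Softening.} Each $\breve{r}_{\sigma\tau}$ commutatizes to $0\in\tau^\perp\cap M$, and together with the pushed-forward monomials $\breve{v}_{\tau,u}$ (for non-maximal $\tau$, $u\in\Sigma$) they form a finite subset of $\breve{M}$ with image in the corresponding $M_\tau$. Collecting them into $\Delta$-indexed finitely generated submonoids $\breve{S}_\sigma\subset\breve{M}$, with $\breve{S}_\sigma$ the trivial submonoid for $\sigma\in\Delta(n)$ and $\pi(\breve{S}_\tau)\subset M_\tau$ for non-maximal $\tau$, Proposition~2.2.10 yields an augmentation $\{\breve{M}'_\sigma\}_{\sigma\in\Delta}$ of the inverse $\Delta$-system of $\breve{Y}(\Delta)$ with $\breve{S}_\sigma\subset\breve{M}'_\sigma$; since nothing is added over the maximal cones one has $\breve{M}'_\sigma=\breve{M}_\sigma$ there, so $\{\breve{M}'_\sigma\}$ defines a softening $\breve{Y}'(\Delta)\to\breve{Y}(\Delta)$, to which $\breve{\cal L}$ pulls back as an invertible sheaf $\breve{\cal L}'$ (the pullback of an invertible sheaf under a toric morphism is invertible, cf.\ after Proposition~3.5). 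Over $\breve{Y}'(\Delta)$ each $\breve{r}_{\sigma\tau}$ now lies in $\breve{M}'_\tau$, hence is invertible in $\breve{R}'_\tau$ by admissibility, so by the previous paragraph $\{\breve{s}^{(j)}_\sigma\}_{\sigma\in\Delta}$ is a twisted section of $\breve{\cal L}'$ whose commutatization under $Y(\Delta)\subset\breve{Y}'(\Delta)$ is $s_j$ chart by chart, for every $j=1,\dots,k$. Treating $s_1,\dots,s_k$ simultaneously costs nothing, since the construction already uses the common finite set $\Sigma=\bigcup_j\Supp(s_j)$; alternatively one may invoke Corollary~2.2.11.

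\emph{Main obstacle.} The hard step is the construction of the compatible lifts above: the fundamental (maximal) charts are free associative and therefore rigid — a monomial of $\breve{M}_\sigma$ cannot be altered inside $\breve{M}_\sigma$ — so one must choose the words $\breve{v}_{\sigma,u}$ in the free generators so that one and the same monomial $\breve{r}_{\sigma\tau}$ mediates the transition for \emph{all} $u\in\Sigma$ at once, and these choices must fit together consistently around the cycles of the wall-crossing structure of $\Delta$. Carrying out this propagation, and checking that the words $\breve{v}_{\sigma',u}$ it forces do lie in $\breve{M}_{\sigma'}$, is the crux; what makes it go through is that the commutative shadow of all the data is the monomial cocycle $\{m_\sigma m_\tau^{-1}\}$ of an honest line bundle on the smooth variety $Y(\Delta)$, so the abelianized obstruction to consistency vanishes. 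Everything else is the formal apparatus of Sections~2.2 and~3.
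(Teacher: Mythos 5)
Your proposal correctly assembles the divisorial data, isolates the right finite set of monomials $\Sigma$, and correctly identifies the algebraic requirement: for $\breve{s}^{(j)}_\sigma=\sum_u c^{(j)}_u\breve{v}_{\sigma,u}$ to be a twisted section, the transition factors $\breve{r}_{\sigma\tau}=\breve{v}_{\sigma,u}\,\breve{m}_{\sigma\tau}\,\breve{v}_{\tau,u}^{-1}$ must be independent of $u$ over $\Supp(s_j)$. But the step that is supposed to produce such compatible lifts is asserted rather than proven, and it is exactly here that the argument fails. The appeal to ``the abelianized obstruction to consistency vanishes'' does not close the gap: the obstruction lives in the noncommutative monoid $\breve{M}$, and a family of elements of $M$ that is a monomial coboundary need not lift to a monomial coboundary in $\breve{M}$ --- this is a non-abelian lifting problem which is generically obstructed, and nothing in the argument shows the obstruction vanishes here. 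There is also a structural difficulty with the propagation itself: softening is only permitted to enlarge $\breve{M}_\tau$ for non-maximal $\tau$, so the free monoids $\breve{M}_{\sigma'}$ over the maximal charts are rigid; the word on $\sigma'$ ``forced'' by a wall-crossing from an already-treated $\sigma$ is merely some element of $\breve{M}$ lying over $u\,m_{\sigma'}^{-1}\in M_{\sigma'}$, and there is no reason for it to be a word in the free generators of $\breve{M}_{\sigma'}$. Even if this is arranged wall by wall, the monodromy of the propagation around a cycle of adjacent maximal cones is not addressed.

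The paper's own proof takes a route that sidesteps this difficulty entirely: using that each $s_i$ is a finite ${\Bbb C}$-linear combination of the monomial sections $s_m$, $m\in P_D\cap M$, and that twisted sections pull back under softenings (Lemma~3.7), it reduces to a single monomial section $s_{m'}$. Then $\breve{s}_\sigma=\breve{r}'_\sigma$ is one element of $\breve{M}_\sigma$, the twist factor $\breve{r}'_\sigma\breve{m}_{\sigma\tau}(\breve{r}'_\tau)^{-1}$ is automatically a single monomial with the correct commutative shadow in $\tau^\perp\cap M$, and the only remaining issue --- that it be a unit in the local ring over $\tau$ --- is exactly what softening via Proposition~2.2.10 handles. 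Your observation that additivity of twisted sections requires the transition units to agree across $\Supp(s_j)$ does point to a real subtlety hidden in that reduction, and is worth pressing; but your attempt to manufacture the compatible lifts directly, without the monomial reduction, does not go through as written.
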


\medskip

\begin{proof}
 Present ${\cal L}$ as a $\mathbf{T}$-Cartier divisor ${\cal O}_{Y(\Delta)}(D)$,
   recall the polytope $P_D$ in $M_{\Bbb R}$ associated to $D$,    and
   denote the section of ${\cal L}$ corresponding to $m\in P_D\cap M$ by $s_m$.
 Since each $s_i$ is a ${\Bbb C}$-linear combination of finitely many $s_m$'s, $m\in P_D\cap M$,
  it follows from Lemma~3.7 that
  we only need to prove the proposition for a section of ${\cal L}$ in the form $s_{m^\prime}$
  for some $m^\prime\in P_D\cap M$.

 Write out more explicitly:
    $\breve{Y}(\Delta)=\{{\Bbb C}\langle \breve{M}_\sigma \rangle\}_{\sigma\in\Delta}$  and
	$\breve{\cal L}
	   = (\{ {\Bbb C}\langle \breve{M}_\sigma \rangle\}_{\sigma\in\Delta},
	           \{c_{\rho\tau}\breve{m}_{\sigma\tau}\}_{\tau\prec\sigma\in\Delta})$
	   under a local trivialization (cf.\ Lemma~3.4).
 Recall the collection $\{m_\sigma\}_{\sigma\in \Delta(n)}\subset M$ associated to $D$
   from the beginning of the theme.
 For any $\tau\in \Delta-\Delta(n)$, fix an $\sigma\in\Delta(n)$ such that $\tau\prec\sigma$ and
   assign $m_\tau$ to be $m_\sigma$.
 This enlarges the collection $\{m_\sigma\}_{\sigma\in\Delta(n)}$
   to a collection $\{m_\sigma\}_{\sigma\in\Delta}$
   with the property that
   $$
     m_\sigma\,m_\tau^{-1}\;\in\; \tau^\perp\cap M\,,\:\:\mbox{for $\tau\prec\sigma\in\Delta$}\,.
   $$
 Here, we write the operation of the commutative monoid $M$ multiplicatively.
   
 Choose $\breve{m}_\sigma,\,\breve{m}^\prime_\sigma\in \breve{M}$
   such that
    $$
	  \pi(\breve{m}_\sigma)\;=\; m_\sigma, \hspace{2em}
	  \pi(\breve{m}^\prime_\sigma)\;=\; m^\prime, \hspace{2em}
	  \breve{m}^\prime_\sigma\;\in\; \breve{M}_\sigma\cdot \breve{m}_\sigma,
	$$
   for all $\sigma\in\Delta$.
 Since $\pi:\breve{M}\rightarrow M$ restricts to a surjection
   $\breve{M}_\sigma\cdot \breve{m}_\sigma\rightarrow M_\sigma\cdot m_\sigma$  and
   $m^\prime\in M_\sigma\cdot m_\sigma$, such a choice always exists.
 It follows that
   $\breve{m}_\sigma^\prime= \breve{r}^\prime_\sigma\cdot \breve{m}_\sigma$
   for some $\breve{r}_\sigma^\prime\in\breve{M}_\sigma\subset \breve{R}_\sigma$, $\sigma\in\Delta$.
 Consider now the collection
   $$
       \breve{s}^\prime :=\{\breve{r}_\sigma^\prime\}_{\sigma\in\Delta}
   $$
   of local sections of $\breve{\cal L}$.
 By construction, it recovers the section $s_{m^\prime}$ of ${\cal L}$
   when restricted to $Y(\Delta)\subset \breve{Y}(\Delta)$.
 For $\tau\prec\sigma\in\Delta$,
   $$
     h_{\sigma\tau}(\breve{r}_\sigma^\prime)\;
	   =\; \breve{r}_\sigma^\prime\cdot c_{\sigma\tau}\breve{m}_{\sigma\tau}\,
	   \in\, \breve{R}_\tau .
   $$
 As elements in $\breve{M}$,
   $$
     \breve{r}_\sigma^\prime\, \breve{m}_{\sigma\tau}\;=\;
	   (\breve{r}_\sigma^\prime\,\breve{m}_{\sigma\tau}\,
	                        \mbox{$\breve{r}_\tau^\prime$}^{-1})\cdot \breve{r}_\tau^\prime\,.
   $$
  Since
   $$
      \pi(\breve{r}_\sigma^\prime\,\breve{m}_{\sigma\tau}\,\mbox{$\breve{r}_\tau^\prime$}^{-1})\;
	 =\; (m_\sigma\, m_\tau^{-1})\cdot \pi(\breve{m}_{\sigma\tau})\;
	 \in\;\tau^\perp\cap M\,,
   $$
   $\pi(\breve{r}_\sigma^\prime\,\breve{m}_{\sigma\tau}\,\mbox{$\breve{r}_\tau^\prime$}^{-1})$
    is invertible in ${\Bbb C}[M_\tau]$.
 Thus, $\breve{s}$ is almost a twisted section of $\breve{\cal L}$ {\it except} that
   the twisting factor
      $\breve{r}_\sigma^\prime\,\breve{m}_{\sigma\tau}\,\mbox{$\breve{r}_\tau^\prime$}^{-1}$
	may not yet be in $\breve{M}_\tau$.
 The same argument  as in the proof of Proposition~3.5 and applying Proposition~2.2.10
  show that
     there exists a softening\\     $\phi: \breve{Y}^\prime(\Delta)\rightarrow \breve{Y}(\Delta)$
	 such that the pullback collection $\phi^\ast \breve{s}^\prime$ is indeed a twisted section
	 of the pullback invertible sheaf $\phi^\ast\breve{\cal L}$ on $\breve{Y}^\prime(\Delta)$.
 This proves the proposition.

\end{proof}

\bigskip

Since a soft noncommutative toric scheme associated to $\Delta$
   with an invertible sheaf that extends ${\cal L}$ on $Y(\Delta)$ exists
   (cf.\ Proposition~3.5$^\prime$),
Proposition~3.8 is equivalent to:

\bigskip
				
\noindent
{\bf Proposition 3.8$^\mathbf{\prime}$. [extension to twisted section]}\; {\it				
 Let
   ${\cal L}$	be an invertible sheaf on $Y(\Delta)$   and
   $s_1,\,\cdots\,,\, s_k\in H^0(Y(\Delta), {\cal L})$ be sections of ${\cal L}$.
 Then, there exists a soft noncommutative toric scheme $\breve{Y}(\Delta)$
    with an invertible sheaf $\breve{\cal L}$ that restricts to ${\cal L}$ on $Y(\Delta)\subset \breve{Y}(\Delta)$
  such that $s_i$ extends to a twisted section $\breve{s}_i$ of $\breve{\cal L}$, for $i=1,\,\ldots\,,\, k$.
} 

\bigskip

\begin{flushleft}
{\bf Soft noncommutative closed subschemes of $\breve{Y}(\Delta)$
           associated to twisted sections of invertible sheaves on $\breve{Y}(\Delta)$}
\end{flushleft}
Continuing the notation and discussion of the previous theme.
Let $\breve{s}_i=\{\breve{s}_{i,\,\sigma}\}_{\sigma\in\Delta}$
  be a (non-zero) twisted section of an invertible sheaf $\breve{\cal L}_i$ on $\breve{Y}(\Delta)$,
  for $i=1,\,\ldots\,,\, k$.
With respect to a local trivialization of each $\breve{\cal L}_i$,
 $\breve{s}_i$ has a presentation $\{\breve{r}_{i,\,\sigma}\}_{\sigma\in\Delta}$,
 with $\breve{r}_{i,\,\sigma}\in\breve{R}_\sigma$, $\sigma\in\Delta$.
A different local trivialization of  $\breve{\cal L}_i$ gives rise to a different presentation of $\breve{s}_i$
 of the form $\{\breve{r}_{i,\,\sigma} \breve{u}_{i,\,\sigma}\}_{\sigma\in\Delta}$,
 where $\breve{u}_{i,\,\sigma}$ is a unit (i.e.\ invertible element) of $\breve{R}_\sigma$.
Denote $\Gamma = \{\breve{s}_1,\,\cdots\,,\,\breve{s}_k\}$.
It follows that the two-sided ideal
 $$
   \breve{I}_{\Gamma,\,\sigma}\; :=\; (\breve{r}_{1,\,\sigma},\,\cdots\,,\,\breve{r}_{k,\,\sigma})
 $$
 of $\breve{R}_\sigma$, $\sigma\in\Delta$, associated to $\Gamma$
 is independent of the local trivialization of $\breve{\cal L}_i$, $i=1,\,\ldots\,,\,k$.
This defines an ideal sheaf
 ${\cal I}_\Gamma :=\{\breve{I}_{\Gamma,\,\sigma}\}_{\sigma\in\Delta}$ on $\breve{Y}(\Delta)$
 and hence a soft noncommutative closed subscheme $\breve{Z}_\Gamma$ of $\breve{Y}(\Delta)$.

\bigskip

\begin{sdefinition} {\bf [soft noncommutative closed subscheme associated to twisted sections]}\; {\rm
 $\breve{Z}_\Gamma$ is called
     the {\it soft noncommutative closed subscheme of $\breve{Y}(\Delta)$ associated to $\Gamma$}.
 When $\Gamma=\{\breve{s}\}$,
   $\breve{Z}_{\breve{s}}:=  \breve{Z}_\Gamma$
    is called the {\it soft noncommutative hypersurface} of $\breve{Y}(\Delta)$ associated to $\breve{s}$.
}\end{sdefinition}

\bigskip

\begin{sdefinition} {\bf [soft noncommutative scheme/space (via toric geometry)]}\; {\rm
 A soft noncommutative toric scheme $\breve{Y}(\Delta)$ associated to a fan $\Delta$ or
   a soft noncommutative closed subscheme $\breve{Z}$ of a $\breve{Y}(\Delta)$ for some $\Delta$
  will be called a {\it soft noncommutative scheme via toric geometry},
    or simply a {\it soft noncommutative scheme}, or even a {\it soft noncommutative space}.
 The {\it structure sheaf} ${\cal O}_{\breve{Y}}$ of such a  noncommutative space $\breve{Y}$
    is defined to the inverse $\Delta$-system of ${\Bbb C}$-algebras that describes $\breve{Y}$.   	
}\end{sdefinition}

\bigskip

It is this class of noncommutative spaces that we will study further in sequels.
Definition~2.2.12 and Definition~3.1 can be generalized routinely to soft noncommutative schemes and their modules.
A straightforward generalization of Proposition 3.8$^\prime$
     to a finite collection of invertible sheaves with sections
    $\{({\cal L}_1, s_1),\,\,\cdots,\, ({\cal L}_k, s_k)\}$ on $Y(\Delta)$
  implies that:			
			
\bigskip

\begin{scorollary} {\bf [commutative complete intersection subscheme]}\;
 Any complete intersection subscheme $Z$ of a smooth toric variety $Y(\Delta)$  embeds
   as a commutative closed subscheme of a soft noncommutative scheme $\breve{Z}$
   in a $\breve{Y}(\Delta)\,(\supset Y(\Delta))$
   such that $\breve{Z}\cap Y(\Delta)=Z$.
  In particular, this applies to complete intersection Calabi-Yau spaces in a smooth toric variety.
\end{scorollary}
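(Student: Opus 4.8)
The plan is to reduce Corollary~3.12 to Proposition~3.8$^\prime$ by unwinding what it means for $Z$ to be a complete intersection in the smooth toric variety $Y(\Delta)$. By definition, there are invertible sheaves ${\cal L}_1,\,\cdots\,,\,{\cal L}_k$ on $Y(\Delta)$ and sections $s_i\in H^0(Y(\Delta),{\cal L}_i)$, $i=1,\,\ldots\,,\,k$, such that $Z=V(s_1,\,\cdots\,,\,s_k)$ is cut out by the ideal sheaf ${\cal I}_Z$ locally generated by the $s_i$'s, and $Z$ has the expected codimension $k$. The first step is to invoke the multi-sheaf version of Proposition~3.8$^\prime$ mentioned in the text immediately before the corollary: applying Proposition~3.5$^\prime$ and Proposition~3.8$^\prime$ successively (or, more efficiently, running the proof of Proposition~3.8 simultaneously for the finite data $\{({\cal L}_1,s_1),\,\cdots\,,\,({\cal L}_k,s_k)\}$ and invoking Proposition~2.2.10 once for the combined collection of finitely generated $\Delta$-indexed submonoids), one obtains a single soft noncommutative toric scheme $\breve{Y}(\Delta)\supset Y(\Delta)$ together with invertible sheaves $\breve{\cal L}_1,\,\cdots\,,\,\breve{\cal L}_k$ on $\breve{Y}(\Delta)$, each restricting to the corresponding ${\cal L}_i$ on $Y(\Delta)$, and twisted sections $\breve{s}_1,\,\cdots\,,\,\breve{s}_k$ of the $\breve{\cal L}_i$ extending the $s_i$.

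The second step is to form the soft noncommutative closed subscheme $\breve{Z}:=\breve{Z}_\Gamma$ of $\breve{Y}(\Delta)$ associated to the collection $\Gamma=\{\breve{s}_1,\,\cdots\,,\,\breve{s}_k\}$, as in Definition~3.9: chart by chart, choosing local trivializations of the $\breve{\cal L}_i$, the twisted section $\breve{s}_i$ has a presentation $\breve{r}_{i,\sigma}\in\breve{R}_\sigma$, and we set $\breve{I}_{\Gamma,\sigma}:=(\breve{r}_{1,\sigma},\,\cdots\,,\,\breve{r}_{k,\sigma})$, the two-sided ideal they generate; this is independent of the trivializations, so $\{\breve{I}_{\Gamma,\sigma}\}_{\sigma\in\Delta}$ is an ideal sheaf and defines $\breve{Z}$. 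It remains to identify the intersection of $\breve{Z}$ with the master commutative subscheme; since a morphism from a commutative scheme to $\breve{Y}(\Delta)$ factors through $Y(\Delta)\subset\breve{Y}(\Delta)^\tinyclubsuit$, the relevant thing is to restrict the ideal sheaf to $Y(\Delta)$. Under the monoid-algebra epimorphism $\pi:{\Bbb C}\langle\breve{M}_\sigma\rangle\rightarrow{\Bbb C}[M_\sigma]$ the generators $\breve{r}_{i,\sigma}$ map to the local generators of ${\cal I}_Z$ on $U_\sigma$, because $\breve{s}_i$ extends $s_i$ by construction; hence $\breve{I}_{\Gamma,\sigma}$ maps onto the ideal of $Z$ in ${\Bbb C}[M_\sigma]$, which gives $\breve{Z}\cap Y(\Delta)=Z$ as a scheme-theoretic intersection, and in particular exhibits $Z$ as a commutative closed subscheme of the soft noncommutative scheme $\breve{Z}$. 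Finally, the last sentence of the corollary is immediate: a complete-intersection Calabi--Yau in a smooth toric variety is in particular a complete intersection subscheme, so the result applies verbatim.

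The main obstacle is not conceptual but bookkeeping: making the multi-sheaf strengthening of Proposition~3.8$^\prime$ fully rigorous. For a single section one augments the underlying inverse $\Delta$-system once to absorb the twisting factors $\breve{r}_\sigma^\prime\,\breve{m}_{\sigma\tau}\,\breve{r}_\tau^{\prime\,-1}$ into the submonoids $\breve{M}_\tau$; for $k$ sections (and $k$ potentially different invertible sheaves) one must collect all the finitely generated $\Delta$-indexed submonoids $\breve{S}_\sigma^{(i)}$ arising from the $i$-th datum, take their join $\breve{S}_\sigma:=\langle\breve{S}_\sigma^{(1)},\,\cdots\,,\,\breve{S}_\sigma^{(k)}\rangle$ (still finitely generated with $\pi(\breve{S}_\sigma)\subset M_\sigma$), and apply Proposition~2.2.10 (equivalently Corollary~2.2.11) a single time to produce one common softening over which \emph{all} $k$ twisted sections are simultaneously defined. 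One should also check that choosing the \emph{same} extension $\{m_\sigma\}_{\sigma\in\Delta}$ of the divisor data can be done compatibly for all $i$, and that the resulting softening still keeps the fundamental charts $\sigma\in\Delta(n)$ untouched, so that the final object is genuinely a soft noncommutative scheme in the sense of Definition~3.11; both follow, as in the proofs of Propositions~3.5 and~3.8, from $\breve{S}_\sigma=\emptyset$ for maximal cones and from the proof of Proposition~2.2.10.
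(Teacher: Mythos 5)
Your proposal is correct and follows the route the paper intends: the paper does not give a formal proof of Corollary~3.11 but states that it follows from a ``straightforward generalization'' of Proposition~3.8$^\prime$ to a finite collection $\{({\cal L}_1,s_1),\,\ldots\,,\,({\cal L}_k,s_k)\}$, combined with Definition~3.9, and your write-up is precisely that generalization filled in. The only quibble is a reference-number slip (the corollary is 3.11 in the paper; 3.12 is the Question that follows); the mathematics — collecting the finitely generated $\Delta$-indexed submonoids from each datum, taking their join, invoking Proposition~2.2.10 once to get a single softening carrying all $k$ twisted sections, forming $\breve{Z}_\Gamma$, and checking via $\pi:{\Bbb C}\langle\breve{M}_\sigma\rangle\rightarrow{\Bbb C}[M_\sigma]$ that the ideal sheaf restricts to ${\cal I}_Z$ on $Y(\Delta)$ — is exactly what is needed.
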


\bigskip			

Indeed it is the attempt to make this corollary holds that the notion of soft noncommutative schemes via toric geometry is discovered.
Let us spell out a guiding question behind the NCS spin-off of the D-project before leaving this section:

\bigskip

\begin{squestion} {\bf [soft noncommutative Calabi-Yau space and mirror]}\; {\rm
 What is the correct notion/definition of {\it soft noncommutative Calabi-Yau spaces}
   in the current context?
 From pure mathematical generalization of the commutative case?
 From world-volume conformal invariance or supersymmetry of D-branes (cf.\ Sec.~4.2)?
 What is the {\it mirror symmetry} phenomenon in this context?																	
}\end{squestion}

\bigskip
\bigskip

\section{(Dynamical) D-branes on a soft noncommutative space}

The notion of
  morphisms from an Azumaya scheme with a fundamental module over ${\Bbb C}$
  to a soft noncommutative scheme over ${\Bbb C}$
 is developed in this section.

\bigskip
		
\subsection{Review of Azumaya/matrix schemes with a fundamental module over ${\Bbb C}$}
		
The notion of {\it Azumaya schemes} (or equivalently {\it matrix schemes})  {\it with a fundamental module}
   and {\it morphisms} therefrom were developed
 in [L-Y1]  (D(1)) and [L-L-S-Y] (D(2)) in the realm of Algebraic Geometry   and
 in [L-Y2] (D(11.1)) and [L-Y3] (D(11.2)) in the realm of Differential Geometry
 to capture the Higgsing/unHiggsing feature of D-branes when they coincide or separate     and
 to address dynamical D-branes moving in a space-time, [L-Y4] (D(13.1))and [L-Y5] (D(13.3)).
The generalization of such notion to cover fermionic D-branes in Superstring Theory has been a major focus
 in more recent years; cf. the SUSY spin-off of the D-project, e.g.\ [L-Y6] (SUSY(2.1)).
The key setup of Azumaya/matrix schemes in the realm of Algebraic Geometry that is relevant to the current work
 is reviewed below for the terminology and notation.
Readers are referred to the above works for more details and
 to [Liu] for a review of the first four years (fall 2007 -- fall 2011) of the D-project.

\bigskip

\begin{definition}  {\bf [D-brane world volume: Azumaya/matrix scheme over ${\Bbb C}$]}\; {\rm
 Let
   $X$ be a (Noetherian) scheme over ${\Bbb C}$     and
   ${\cal E}$ be a locally free ${\cal O}_X$-module of rank $r$.
 Then
  the sheaf
   $\Endsheaf_{{\cal O}_X}({\cal E}):= \Homsheaf_{{\cal O}_X}({\cal E},\,{\cal E})$
   of endomorphisms of ${\cal E}$ is  a noncommutative ${\cal O}_X$-algebra
   with center ${\cal O}_X$.
 The fiber of $\Endsheaf_{{\cal O}_X}({\cal E})$ over each ${\Bbb C}$-point on $X$
   is isomorphic to the {\it Azumaya algebra} (or interchangeably {\it matrix algebra})
   of $r\times r$ matrices over ${\Bbb C}$.
 ${\cal E}$ is in the fundamental representation of $\Endsheaf_{{\cal O}_X}({\cal E})$.
 The new ringed space from the enhanced structure sheaf, with its fundamental representation encoded,
  $$
    X^{\!A\!z}:= (X, {\cal O}_X^{A\!z}:= \Endsheaf_{{\cal O}_X}({\cal E}), {\cal E})\,.
  $$
  is called an {\it Azumaya scheme} (or interchangeably {\it matrix scheme}) {\it with a fundamental module}.
 
 The ringed space $(X,{\cal O}_X^{A\!z})$ serves as the {\it world-volume} of a D-brane    and
 the fundamental (left) ${\cal O}_X^{A\!z}$-module ${\cal E}$ serves as the {\it Chan-Paton sheaf}
   on the world-volume.
 For a complete description of the data on the world-volume of a D-brane,
   there is also a {\it connection $\nabla$} on ${\cal E}$    and, in this case,
   one may introduce a {\it Hermitian structure} on ${\cal E}$ and require that $\nabla$ be unitary.
 These two additional structures on $X^{\!A\!z}$ are irrelevant to the discussion of the current work
   and hence will be dropped.
}\end{definition}
  
\bigskip

As the category of (left) ${\cal O}_X^{A\!z}$-modules is equivalent to the category of
 ${\cal O}_X$-modules (cf.\ Morita equivalence),
 the noncommutativity of $X^{\!A\!z}$ may look only of a mild kind
 --- yet important for the correct mathematical description of dynamical D-branes in string theory.

\bigskip

\begin{example} {\bf [Azumaya/matrix point with fundamental module$/{\Bbb C}$]}\; {\rm
 To get a feeling of such a mildly noncommutative space, consider the simplest case:
 $X =$ a ${\Bbb C}$-point $p$ and
  $$
   p^{A\!z}\;:= (p, M_r({\Bbb C}), {\Bbb C}^{\oplus r})\,,
  $$
  where $M_r({\Bbb C})$ is the algebra of $r\times r$ matrices over ${\Bbb C}$.
 One may attempt to assign some sensible topology ``$\Space(M_r({\Bbb C}))$"
  to the noncommutative ${\Bbb C}$-algebra, in the role of $\Spec(R)$ to a commutative ring $R$.
 However, all the existing methods only lead to that ``$\Space(M_r({\Bbb C}))$" $=\{p\}$.
 Thus, underlying topology is the wrong direction to understand the geometry
   behind the Azumaya point (interchangeably, matrix point) $p^{A\!z}$.

 Now, the D-brane world-volume is only half of the story.
 The other half is how the world-volume gets mapped into a target space-time $Y$
 (i.e.\ {\it morphisms} from Azumaya schemes to $Y$).
 With this in mind,
  if $A\subset M_r({\Bbb C})$ is a ${\Bbb C}$-subalgebra  and
   we do know how to make sense of $\Space(A)$ and morphisms $\Space(A)\rightarrow Y$
  --- for example, $A$ is commutative ${\Bbb C}$-subalgebra of $M_r({\Bbb C})$ and
      $Y$ is an ordinary Noetherian commutative scheme in Commutative Algebraic Geometry ([Ha]) ---
 then we should expect a morphism $p^{A\!z}\rightarrow Y$ from the composition
  $$
    ``\Space(M_r({\Bbb C}))"\; \longrightaarrow\; \Space(A)\; \longrightarrow Y\,.
  $$
 Here, $\longrightaarrow$ indicates a {\it dominant morphism} since $A\hookrightarrow M_r({\Bbb C})$.

 It follows that
  the correct way to unravel the geometry behind $p^{A\!z}$ is through ${\Bbb C}$-subalgebras
  of $M_r({\Bbb C})$.
 For example, for $r\ge 2$, $M_r({\Bbb R})$ contains ${\Bbb C}$-subalgebra in product form
  $A=A_1\times A_2$.
  This means $p^{A\!z}$ has hidden disconnectivity.
 Indeed, the consideration of commutative ${\Bbb C}$-subalgebras of $M_r({\Bbb C})$ alone
  is enough to indicate that $p^{A\!z}$ has a very rich geometry behind.
 Cf.\ [Liu: {\sc Figure} 1-2, {\sc Figure} 3-1, and {\sc Figure} 3-2].
}\end{example}

\bigskip

Example~4.1.2 motivates the following definition:

\bigskip
		
\begin{definition} {\bf [surrogate of $X^{\!A\!z}$]}\; {\rm		
 (Continuing Definition~4.1.1.)
 Let ${\cal A}\subset {\cal O}_X^{A\!z}$ be an ${\cal O}_X$-subalgebra.
 Then, the ringed space
  $$
     X_{\!\cal A}\; :=\;  (X,\,{\cal A} )
  $$
 is called a {\it surrogate} of $X^{\!A\!z}$.
 When ${\cal A}$ is commutative, $X_{\!\cal A}$ is simply $\boldSpec({\cal A})/X$.
 The inclusion ${\cal O}_X\subset {\cal A}$ specifies
   a dominant morphism $X_{\!\cal A}\rightarrow X$ over $X$
  while the inclusion ${\cal A}\subset {\cal O}_X^{A\!z}$ specifies a dominant morphism
   $X^{\!A\!z}\rightarrow X_{\!\cal A}$ over $X$.
 Cf.\ {\sc Figure} 4-1-1.\footnote{Reproduced from:\,
			 {\sl Chien-Hao Liu}  and {\sl Shing-Tung Yau},
			 {\it D-branes and synthetic/$C^\infty$-algebraic symplectic/ calibrated geometry, I.
			         Lemma on a finite algebraicness property of smooth maps from Azumaya/matrix manifolds},
			arXiv:1504.01841 [math.SG] (D(12.1)), {\sc Figure} 1-2.
                           } 
 %
 
 \bigskip
 
\begin{figure}[htbp]
 \bigskip
  \centering
  \includegraphics[width=0.80\textwidth]{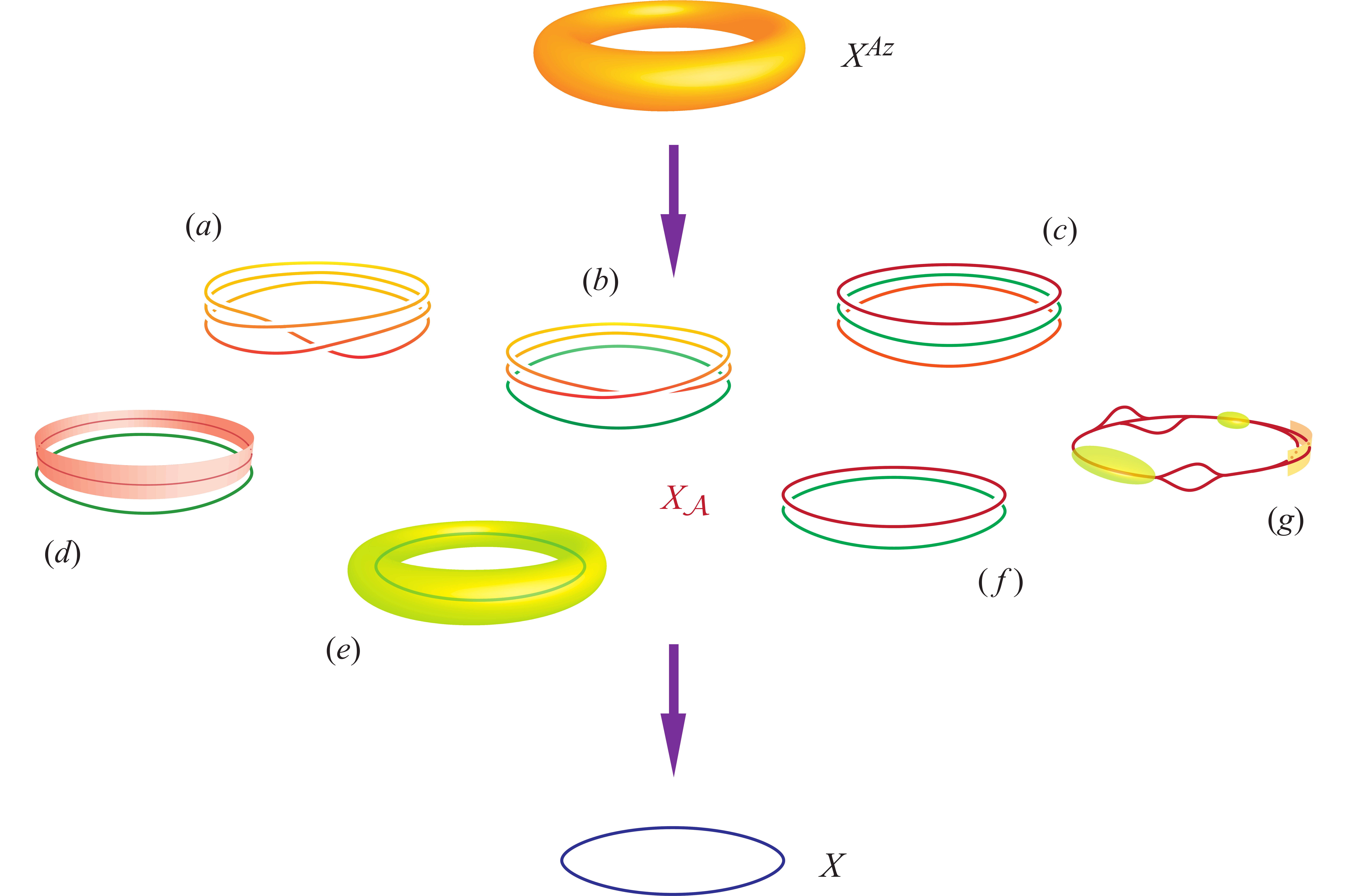}
  
  \bigskip
  \bigskip
 \centerline{\parbox{13cm}{\small\baselineskip 12pt
  {\sc Figure}~4-1-1.
 The Azumaya/matrix scheme $X^{\!A\!z}$ is indicated by a noncommutative cloud sitting over  $X$.
  In-between are illustrated examples of surrogates (a), (b), (c), (d), (e), (f), (g)  of $X^{\!A\!z}$.
  The vertical arrows
    $X^{\!A\!z}\rightarrow X_{\!\cal A}$ and $X_{\!\cal A}\rightarrow X$
	are the built-in dominant morphisms associated to the inclusions
	${\cal O}_X\subset {\cal A}\subset {\cal O}_X^{A\!z}$ of ${\cal O}_X$-algebras.	
  }}
\end{figure}	

}\end{definition}

\bigskip
\bigskip
 
\subsection{Morphisms from an Azumaya scheme with a fundamental module
       to a soft noncommutative scheme $\breve{Y}$: (Dynamical) D-branes on $\breve{Y}$}

We study in the subsection
  the notion of morphisms from an Azumaya scheme with a fundamental module
   to a soft noncommutative scheme $\breve{Y}$.
The design is guided by the requirement to reproduce the Higgsing/unHiggsing phenomenon
  when D-branes collide/coincide or fall apart/separate in a space-time.

\bigskip

\begin{flushleft}
{\bf Idempotents and gluing of quasi-homomorphisms}
\end{flushleft}
\begin{definition} {\bf [quasi-homomorphism of ${\Bbb C}$-algebras]}\; {\rm
 Given two (generally noncommutative) ${\Bbb C}$-algebras $R$ and $A$,
  a {\it quasi-homomorphism} from $R$ to $A$ is a ${\Bbb C}$-vector-space homomorphism $f:R\rightarrow A$
  such that $f(r_1r_2)= f(r_1)f(r_2)$.
 Note that it is not required that $f(1_R)=1_A$, where $1_R$  and $1_A$ are the identity of $R$ and $A$ respectively.
 Since $f(1_R)f(1_R)=f(1_R)$, $f(1_R)$ is an idempotent of $A$.
 Since $[f(1_R), f(r)]=0$ for all $r\in R$, $f(R)\subset \Centralizer(f(1_R))$.
 A ${\Bbb C}$-algebra quasi-homomorphism $f: R\rightarrow S$ with $f(1_R)=1_S$
  is called as usual a ${\Bbb C}$-algebra homomorphism.
 In the other extreme, the zero-map $R\rightarrow 0\in A$ is a quasi-homomorphism.
}\end{definition}

\bigskip

Let $A,\,R,\,S$ be ${\Bbb C}$-algebras with a fixed $\Bbb C$-algebra homomorphism $h:R\rightarrow S$.
It is standard that two ${\Bbb C}$-algebra homomorphisms $f_R:R\;\rightarrow A$ and $f_S: S \rightarrow A$
 are defined to be glued under $h$ if $f_R=f_S  \circ h$.
But if, instead, $f_R$ and $f_S$ are only ${\Bbb C}$-algebra quasi-homomorphisms, it takes some thought
 as to what the ``correct" definition of `$f_R$ and $f_S$ glue under $h$' should be.
As a generally noncommutative ${\Bbb C}$-algebra,
 it can happen that $A$ is not a product of ${\Bbb C}$-algebras and yet still contains idempotents other than $0$ and $1$.
In terms of the objects in the opposite category of the category of ${\Bbb C}$-algebras,
the geometry $\Scheme(A)$ behind the ${\Bbb C}$-algebra $A$ could have hidden disconnectivity
  (cf.\ Definition~2.1.8 and Example~2.1.9)
  and in terms of geometry
 it can happen that some connected components in the hidden disconnectivity of $\Scheme(A)$ is mapped to
   the complement of the image of $\Scheme(S)$ in $\Scheme(R)$ under $h$.
Since such hidden disconnectivity of $\Scheme(A)$ is inherited from idempotents of $A$ other than $0$ and $1$,
 we need to keep track of $f_R(1_R)$ and $f_S(1_S)$,
   allowing the situation $f_R(1_R)\ne f_S(1_S)$, as well
 when considering gluing of ${\Bbb C}$-algebra quasi-homomorphisms $f_R$ and $f_S$ under $h$.

\bigskip

\begin{definition} {\bf [subordination relation of idempotents]}\; {\rm
 Let $A$ be a ${\Bbb C}$-algebra and $e_1,\, e_2\in A$ are idempotents: $e_1^2=e_1$, $e_2^2=e_2$.
 We say that
   {\it $e_1$ is subordinate to $e_2$}, in notation $e_1\prec e_2$, or equivalently
  {\it $e_2$ is superior to $e_1$}, in notation $e_2\succ e_1$,
  if
   $$
     e_1\;\ne\; e_2   \hspace{2em}\mbox{and}\hspace{2em}	
     e_1e_2\; =\; e_2e_1\; =e_1\,.
   $$
 Denote `$e_1\prec e_2$ or $e_1=e_2$' by $e_1\preccurlyeq e_2$ or equivalently $e_2\succcurlyeq e_1$.
 Note that $0\preccurlyeq e \preccurlyeq 1$ for any idempotent $e$ of $A$.
}\end{definition}

\bigskip

When
 $e_1\prec e_2$, $e_2^\prime:= e_2-e_1\in A$ is also an idempotent subordinate to $e_2$ and
   it satisfies $e_2^\prime e_1=e_1e_2^\prime=0$.
 Thus, $e_2=e_1 + e_2^\prime$ gives an orthogonal decomposition of $e_2$ into subordinate idempotents.
 
\bigskip

\begin{definition} {\bf [gluing  of quasi-homomorphisms]}\; {\rm
 Let
  $A,\,R,\,S$ be ${\Bbb C}$-algebras with a fixed $\Bbb C$-algebra homomorphism $h:R\rightarrow S$    and
  $f_R:R\;\rightarrow A$ and $f_S: S \rightarrow A$ be quasi-homomorphisms of ${\Bbb C}$-algebras.
 We say that {\it $f_R$ and $f_S$ glue under $h$}, denoted $f_R \stackrel{h}{\rightsquigarrow}f_S$,   if
  $$
	f_S(1_S)\preccurlyeq f_R(1_R)\,,\;\;
    f_R(R)\subset \Centralizer(f_S(1_S))\,,\;\; \mbox{and}\;\;
	f_S(1_S)\cdot f_R     = f_R \cdot f_S(1_S)    =  f_S  \circ h\,.
  $$
 In terms of a commutative diagram:
  $$
    f_S(1_S) \preccurlyeq f_R(1_R)\,,\;\;
    f_R(R) \subset \Centralizer(f_S(1_S))\,,\;\; \mbox{and}
	\hspace{-1.6em}
    \xymatrix{
	 &  R \ar[d]_-h \ar[rrr]^-{f_R}&&& A \ar[d]^-{f_S(1_S)\,\cdot\,\bullet}_-{\bullet\,\cdot\,f_S(1_S)\,,} \\
	 &  S \ar[rrr]^-{f_S}                   &&& A     & \hspace{1em}.
	}
  $$
}\end{definition}

\medskip

\begin{explanation} {\bf [gluing of quasi-homomorphisms - geometry behind]} {\rm
 Refine the commuting diagram in Definition~4.2.3 to
  $$
   \xymatrix{
    &  R \ar[d]_-h \ar[rrr]^-{f_R}
	  &&& f_R(R)\; \ar[d]^-{=\; f_S(1_S)\,\cdot\,\bullet\;=:\; g}_-{\bullet\,\cdot\,f_S(1_S)}
	                                 \ar@{^{(}->}[rr]
      &&    A   \\
	&  S \ar[rrr]^-{f_S}                   &&& f_S(S)\; \ar@{^{(}->}[rr]         && A      & \hspace{-4em}.
   }
  $$
 Then, in addition to $h$,
  $f_S$ and $f_S$ are now homomorphisms of ${\Bbb C}$-algebras and
  so is $g$, since $f_S(1_S)\preccurlyeq f_R(1_R)$ in $A$,
  while the two horizontal inclusions are only quasi-homomorphisms of ${\Bbb C}$-algebras.
 Let
  $$
     e_1\; :=\; f_R(1_R) \hspace{2em}\mbox{and}\hspace{2em}
	 e_2\; :=\; f_S(1_S)\,.
  $$
 Then $e_1=e_1^\prime+ e_2$ is an orthogonal decomposition of $e_1$ by idempotents.
 Since $f_R(R)\subset \Centralizer(e_2)$,
   $$
     f_R(R)\;=\; f_R(R)\cdot e_1^\prime + f_R(R)\cdot e_2 \hspace{2em}\mbox{in $A$}
   $$
  is an orthogonal decomposition of the ${\Bbb C}$-algebra $f_R(R)$; i.e.
   $f_R(R)\simeq f_R(R)\cdot e_1^\prime \times f_R(R)\cdot e_2$ as abstract ${\Bbb C}$-algebras.
 Thus,
  when two quasi-homomorphisms $f_R:R\rightarrow A$ and $f_S: S\rightarrow A$ glue under $h$
    in the sense of Definition~4.2.3,
  one has a commuting diagram
  $$
   \xymatrix{
    &  R \ar[d]_-h \ar[rrr]^-{f_R}
	  &&& f_R(R)\cdot e_1^\prime \times f_R(R)\cdot e_2\;
	             \ar[d]^-{=\; e_2\,\cdot\,\bullet\;=:\; g}_-{\bullet\,\cdot\, e_2}
	                                 \ar@{^{(}->}[rr]               &&    A   \\
	&  S \ar[rrr]^-{f_S}                   &&& f_S(S)\; \ar@{^{(}->}[rr]           && A        & \hspace{-4em}.
   }
  $$
 Or, equivalently in terms of the objects in the opposite category of the category of ${\Bbb C}$-algebras,
  a diagram of morphisms of (generally noncommutative) schemes:
  $$
   \xymatrix{
    \hspace{4em}
    &  \Scheme(R)
	   & \Scheme(f_R(R)\cdot e_1^\prime)  \amalg \Scheme( f_R(R)\cdot e_2)\hspace{9em}  \ar[l]    \\
	&  \Scheme(S)\ar[u]
	   & \Scheme(f_S(S))   \ar[l] \ar[u]      & \hspace{-22em}.
   }
  $$
 
 Assume in addition that the idempotent $e_3:= 1-e_1$ is also orthogonal to $e_2$ \\
 (i.e.\ $e_2\, e_3=e_3\, e_2=0$ in $A$)
   and let
   $$
      A_e\; :=\; \{a\in A\;|\; ae=ea=a\}
   $$
     for an idempotent $e\in A$.
 Note that
   $A_e$ is a ${\Bbb C}$-algebra with a built-in ${\Bbb C}$-algebra quasi-homomorphism $A_e\hookrightarrow A$.	
 Then, $1=e_1^\prime + e_2 +e_3$ is an orthogonal decomposition of $1$ by idempotents in $A$ and
   $$
     A_{e_1^\prime}\times A_{e_2}\times A_{e_3}\;
      \stackrel{\sim}{\longrightarrow}\;
	 A_{e_1^\prime}+ A_{e_2}+ A_{e_3}\; \hookrightarrow\;  A
   $$
	is now an honest ${\Bbb C}$-subalgebra inclusion.
 Since $f_R(R)\subset A_{e_1^\prime}+ A_{e_2}$	and $f_S(S)\subset A_{e_3}$, one has now
  a commuting diagram of ${\Bbb C}$-algebra homomorphisms
  $$
   \xymatrix{
    & &&&  A_{e_1^\prime} \times A_{e_2}\times A_{e_3} \;
	                   \ar@{->>}[d]^-{\pi_{12}}  \ar@{^{(}->}[rr]     &&  A      \\
    &  R \ar[d]_-h \ar[rrr]^-{f_R}
	  &&& \:\:A_{e_1^\prime} \times A_{e_2}\;
	             \ar@{->>}[d]^-{\pi_2}       \\
	&  S \ar[rrr]^-{f_S}                   &&& \;\;A_{e_2}       &&         & \hspace{-4em}.
   }
  $$
 Here
  $\pi_{12}$ and $\pi_2$ are the projection homomorphisms of product ${\Bbb C}$-algebras
   to the indicated components.
 The corresponding commuting diagram of morphisms of noncommutative affine schemes in the opposite category is then:
   ($\amalg$ = disjoint union)
  $$
   \hspace{-1em}
   \xymatrix{
    & &  \Scheme(A_{e_1^\prime}) \amalg \Scheme(A_{e_2})\amalg \Scheme(A_{e_3}) \;
	    &&  \Scheme(A) \ar@{->>}[ll]     \\
    &  \Scheme(R)
	  & \;\rule{0ex}{1.2em}\Scheme(A_{e_1^\prime}) \amalg \Scheme(A_{e_2})
           \ar@{^{(}->}[u]^-{\iota_{12}}  \ar[l]	
                      	  \\
	& \Scheme(S)\ar[u]
  	   &\;\rule{0ex}{1.2em}\Scheme(A_{e_2})  \ar[l]  \ar@{^{(}->}[u]^-{\iota_2}
	   &&         & \hspace{-4em}.
   }
  $$
 Here, $\iota_{12}$ and $\iota_2$ are the inclusion morphism of the connected components as indicated.
  
 Thus,
   a quasi-homomorphism as defined in Definition~4.2.1 is nothing but a roof of ordinary homomorphisms    and,
  with an additional assumption of the completeness of idempotents involved,
  a gluing of quasi-homomorphisms in the sense of Definition~4.2.3
    is nothing but the ordinary gluing of homomorphisms after the replacement by roofs.
}\end{explanation}

\medskip

\begin{remark} {$[$transitivity of subordination relation of idempotents$]$}\; {\rm
 Caution that for a general noncommutative ${\Bbb C}$-algebra $A$,
   idempotents $e_1\prec e_2$ and $e_2\prec e_3$ do not imply $e_1\prec e_3$.
 (E.g.\
    $A={\Bbb C}\langle z_1,z_2,z_2  \rangle
	             /(z_1^2-z_1, z_2^2-z_2, z_3^2-z_3, z_1z_2-z_1, z_2z_1-z_1, z_2z_3-z_2, z_3z_2-z_2)$.)
 However, transitivity of $\prec$ holds for idempotents of the Azumaya/matrix algebra
     $M_r({\Bbb C})$ of rank $r$ over ${\Bbb C}$, for all $r$.
 (For idempotents $e_1\prec e_2$, $e_2\prec e_3$ in $M_r({\Bbb C})$,
     the three $e_1$, $e_2$, $e_3$ must be commuting and hence simultaneously diagonizable.
	 Which implies $e_1\prec e_3$ in the end.)
}\end{remark}

\bigskip

\begin{flushleft}
{\bf Homomorphisms from a $\Delta$-system of ${\Bbb C}$-algebras to a ${\Bbb C}$-algebra $A$}
\end{flushleft}
Let
 $\Delta$ be a fan in $N_{\Bbb R}$ that satisfies Assumption~2.2.2    and
 $A$ a ${\Bbb C}$-algebra.

\bigskip

\begin{definition} {\bf [$\Delta$-system of idempotents in $A$]}\; {\rm
 Let $\mathbf{e}_\Delta^A := \{e_\sigma\}_{\sigma\in \Delta}$ be a collection of idempotents in $A$
  labelled by cones in $\Delta$.
 $\mathbf{e}_\Delta^A$ is said to be
  \begin{itemize}
   \item[(i)]
   a {\it weak $\Delta$-system of idempotents in $A$}
      if $e_\tau\preccurlyeq e_\sigma$ for all $\tau\prec\sigma\in\Delta$;
	
   \item[(ii)]
   a {\it strong $\Delta$-system of idempotents in $A$}
      if $e_\sigma e_{\sigma^\prime} = e_{\sigma\cap \sigma^\prime}$
	   for all $\sigma,\, \sigma^\prime\in\Delta$.
  \end{itemize}	
 Note that
   since $\sigma\cap\sigma^\prime=\sigma^\prime\cap\sigma$,
    all the idempotents  in a strong $\Delta$-system of idempotents in $A$ commute with each other.
 Furthermore, since $\tau\cap \sigma=\tau$ for $\tau\prec\sigma\in\Delta$,
   a strong $\Delta$-system of idempotents in $A$ is naturally a weak $\Delta$-system of idempotents in $A$.
}\end{definition}

\medskip

\begin{lemma-definition} {\bf [reduced idempotents in strong $\Delta$-system]}\;
 Let $\mathbf{e}_\Delta^A=\{e_\sigma\}_{\sigma\in\Delta}$ be a strong $\Delta$-system of idempotents in $A$.
 Then, there exists a unique collection $\{\underline{e}_\sigma\}_{\sigma\in\Delta}$ of orthogonal
     (i.e.\ $\underline{e}_\sigma\,\underline{e}_{\sigma^\prime}=0$ for $\sigma\ne\sigma^\prime$)
	idempotents in $A$
  such that
    $$
      e_\sigma\;=\; \sum_{\tau\preccurlyeq \sigma}\underline{e}_\tau\,.
    $$	
 {\rm	
 $\underline{e}_\sigma$ is called the {\it reduced idempotent} associated to $\sigma\in\Delta$
    from the strong $\Delta$-system $\mathbf{e}_\Delta^A$.
 It is the contribution to $e_\sigma$ after being trimmed away all the boundary contributions.
	}
\end{lemma-definition}	

\medskip

\begin{proof}
 Explicitly, the reduced idempotent $\underline{e}_\sigma$, for $\sigma\in\Delta(k)$, is given by:
  \begin{eqnarray*}
     \lefteqn{
		\underline{e}_{\,\sigma}\;
		:=\;
             e_\sigma\,
			 - \sum_{\tau\in \Delta(k-1),\,\tau\prec\sigma }  e_\tau\,
			 + \sum_{\tau\in\Delta(k-2),\,\tau\prec\sigma}e_\tau\,
			 - \sum_{\tau\in\Delta(k-3),\,\tau\prec\sigma} e_\tau       }\\[1.2ex]
	 && \hspace{2em}
			 +\,\cdots\,
			 + (-1)^{k-1} \sum_{\tau\in\Delta(1),\,\tau\prec\sigma} e_\tau\,
			 + (-1)^k e_\mathbf{0}\,. \hspace{12em}
  \end{eqnarray*}	  	   		
  Equivalently,
    let $\{\tau_1,\,\cdots\,,\,\tau_k\}$ be the set of facets (i.e.\ codimension-$1$ faces) of $\sigma\in\Delta(k)$,
    then
     $$
	   \underline{e}_{\,\sigma}\;
	    =\;
         e_\sigma - \sum_i e_{\tau_i}	+ \sum_{i_1<i_2}e_{\tau_{i_1}} e_{\tau_{i_2}}	
			 -\sum_{i_1<i_2<i_3} e_{\tau_{i_1}}e_{\tau_{i_2}} e_{\tau_{i_3}}
			 +\,\cdots\,
			 + (-1)^k e_{\tau_1}\cdots e_{\tau_k}\,.
	 $$	  	     	
  Such inclusion-exclusion formula is akin and applies to simplicial cones.	
  
  That $\underline{e}_\sigma^2=\underline{e}_\sigma$, for $\sigma\in\Delta$,  and
   that $\underline{e}_\sigma\,\underline{e}_{\sigma^\prime}=0$, for $\sigma\ne\sigma^\prime$,
   follow by induction on `up to $k$', $1\le k\le n-1$.

\end{proof}

\medskip

\begin{definition} {\bf [complete strong $\Delta$-system of idempotents]}\; {\rm
 Let
  $\mathbf{e}_\Delta^A:= \{e_\sigma\}_{\sigma\in\Delta}$ be a strong $\Delta$-system of idempotents in $A$
    and
  $\underline{\mathbf{e}}_\Delta^A := \{\underline{e}_\sigma\}_{\sigma\in\Delta}$
   be the associated collection of reduced idempotents.
 We say that $\mathbf{e}_\Delta^A$ is {\it complete}
   if $\sum_{\sigma\in\Delta}\underline{e}_\sigma =1$.
}\end{definition}

\bigskip

With all these preparations, we are now ready for a key definition:

\bigskip

\begin{definition} {\bf [homomorphisms from inverse $\Delta$-system of ${\Bbb C}$-algebras to $A$]}\; {\rm
 Let
   ${\cal Q} := (\{Q_\sigma\}_{\sigma\in\Delta},
                  \{\iota_{\tau\sigma}^\sharp: Q_\sigma\rightarrow Q_\tau\}_{\tau\prec\sigma\in\Delta})$
   be an inverse $\Delta$-system of ${\Bbb C}$-algebras.
 Denote the identity element of $Q_\sigma$ by $1_{Q_\sigma}$ and that of $A$ by $1$.
 Then
   a collection
     $$
	   \varphi^\sharp\;  :=\;   \{f_\sigma^\sharp: Q_\sigma\rightarrow A\}_{\sigma\in\Delta}
	 $$
	 of ${\Bbb C}$-algebra quasi-homomorphisms indexed by cones in $\Delta$
	 is called a {\it homomorphism} from ${\cal Q}$ to $A$, denoted now $\varphi^\sharp: {\cal Q}\rightarrow A$,
   if
   \begin{itemize}
    \item[(i)] ({\it Inverse $\Delta$-system} of quasi-homomorphisms of ${\Bbb C}$-algebras)
	  $$
	    f_\sigma\;\stackrel{\iota_{\tau\sigma}^\sharp}{\rightsquigarrow}\; f_\tau\,,\;\;
		\mbox{for $\tau\prec\sigma\in\Delta$}\,,
	  $$
      Cf.\ Definition~4.2.3.
      In particular,
	   $\mathbf{e}_{\varphi^\sharp}
	     :=\{e_\sigma := f_\sigma^\sharp(1_{Q_\sigma})\}_{\sigma\in\Delta}$
	   is a weak $\Delta$-system of idempotents in $A$, called the
	   {\it $\Delta$-system of idempotents in $A$ associated to $\varphi^\sharp$}.
	
	\item[(ii)]   ({\it Completeness})\hspace{2em}
	 $\mathbf{e}_{\varphi^\sharp}$ is a complete strong $\Delta$-system of idempotents in $A$.		
   \end{itemize}
  For the simple uniformness of terminology but with slight abuse, we shall call such homomorphisms also
    {\it ${\Bbb C}$-algebra homomorphisms}
	when need to distinguish with homomorphisms of other algebraic structures.
  The set of homomorphisms from ${\cal Q}$ to $A$ is thus denoted
    $\Hom_{{\Bbb C}\dashAlgscriptsize}({\cal Q}, A)$, or simply $\Hom({\cal Q}, A)$.
}\end{definition}

\medskip

\begin{explanation} {\bf [gluings of $\Delta$-system of (local) quasi-homomorphisms to}  \\
              {\bf a ``global" homomorphism]}\;
{\rm
 Given a homomorphism $\varphi^\sharp: {\cal Q}\rightarrow A$ as defined in Definition~4.2.9.
 Let
    $\underline{\mathbf{e}}_{\varphi^\sharp}:= \{\underline{e}_\sigma\}_{\sigma\in\Delta}$,
    $e_\sigma := f_\sigma^\sharp(1_{Q_\sigma})$, $\sigma\in\Delta$,
      be the collection of the reduced idempotents from $\mathbf{e}_{\varphi^\sharp}$    and
    $$
	   A_{\underline{e}_\sigma}\;
	   :=\; \{a\in A\,|\, a e_{\underline{e}_\sigma}\,=\, e_{\underline{e}_\sigma}a\, =\, a \}\,,
	    \hspace{1em}\mbox{$\sigma\in\Delta$}\,.
	$$	
 Then,
   a generalization of Explanation~4.2.4 gives the inclusions of ${\Bbb C}$-algebras
    %
	\begin{eqnarray*}
	  f_\sigma^\sharp(Q_\sigma)
	   &  \subset &  \oplus_{\tau\preccurlyeq \sigma}A_{\underline{e}_\tau}\;\;
	                                 \mbox{(as a multiplicatively closed ${\Bbb C}$-vector subspace of $A$)}          \\[1.2ex]
	   & \simeq   & \times_{\tau\preccurlyeq\sigma} A_{\underline{e}_\tau}\;\;
	                                 \mbox{(as an abstract ${\Bbb C}$-algebra)}\,,
	                                 \hspace{1em}\mbox{$\sigma\in\Delta$}\,,	
	\end{eqnarray*}
    and commuting diagrams of {\it homomorphisms} of ${\Bbb C}$-algebras:
   $$
    \hspace{8em}
	\xymatrix{									
	 && A_\varphi\, :=\, \times_{\sigma\in\Delta}A_{\underline{e}_\sigma}
	         \ar@{^{(}->}[rr]^-{\pi_{A_\varphi}^\sharp}    \ar@{->>}[d]^-{\jmath_\sigma^\sharp}
	     && A                      \\
	 Q_\sigma \ar[rr]^-{f_\sigma^\sharp} \ar[d]^-{\iota_{\tau\sigma}^\sharp}
	    &&  A_{\varphi,\,\sigma}\, :=\,\times_{\rho\preccurlyeq\sigma\in\Delta}A_{\underline{e}_\rho}
                  \ar@{->>}[d]^-{\jmath_{\tau\sigma}^\sharp}		\\
	 Q_\tau \ar[rr]^-{f_\tau^\sharp}
	    &&  A_{\varphi,\,\tau}\, :=\,\times_{\rho\preccurlyeq\tau\in\Delta} A_{\underline{e}_\rho}
	                       &&  & ,\:\:\mbox{for $\tau\prec\sigma\in\Delta$}\,.
    }										
   $$
  Here
     $\jmath_\sigma^\sharp: A_\varphi\rightarrow A_{\varphi,\sigma}$, $\sigma\in\Delta$, and
	 $\jmath_{\tau\sigma}^\sharp: A_{\varphi,\,\sigma}\rightarrow A_{\varphi,\,\tau}$
       are the projection maps onto factors of product ${\Bbb C}$-algebras as indicated.
  The underlying geometry of the diagrams is revealed in terms of commuting diagrams of morphisms of
    noncommutative affine schemes in the opposite category: ($\amalg$ = disjoint union)
   $$
    \hspace{1.6em}
	\xymatrix{									
	 && \Scheme(A_\varphi) = \amalg_{\sigma\in\Delta}\Scheme(A_{\underline{e}_\sigma})
	     && \Scheme(A) \ar@{->>}[ll]_-{\pi_{A_\varphi}}                      \\
	 \Scheme(Q_\sigma)
	    &&  \Scheme(A_{\varphi,\,\sigma})\rule{0ex}{1.2em}
		         = \amalg_{\rho\preccurlyeq\sigma\in\Delta}\Scheme(A_{\underline{e}_\rho})
				    \ar@{^{(}->}[u]^-{\jmath_\sigma}       \ar[ll]_-{f_\sigma}        \\
	 \Scheme(Q_\tau)\ar[u]^-{\iota_{\tau\sigma}}
	    &&  \Scheme(A_{\varphi,\,\tau})\rule{0ex}{1.2em}
		         = \amalg_{\rho\preccurlyeq\tau\in\Delta} \Scheme(A_{\underline{e}_\rho})
				    \ar@{^{(}->}[u]^-{\jmath_{\tau\sigma}}  \ar[ll]_-{f_\tau}
	                       &&  & \hspace{-3.6em},
    }										
   $$	
   for $\tau\prec\sigma\in\Delta$.
 Think of ${\cal Q}$ as defining contravariantly a noncommutative space ``$\Space({\cal Q})$".
 $\varphi^\sharp$ then defines a morphism $\varphi:\Scheme(A)\rightarrow \Space(\cal Q)$
   of noncommutative spaces.
}\end{explanation}

\bigskip

\begin{flushleft}
{\bf (Dynamical, complex algebraic) D-branes on a soft noncommutative scheme}
\end{flushleft}
Let
  $X$ be a (commutative) scheme over ${\Bbb C}$,
  ${\cal E}$ be a locally free ${\cal O}_X$-module of rank $r$,
  $$
   X^{\!\Azscriptsize}\;
     =\;(X,\, {\cal O}_X^\Azscriptsize:= \Endsheaf_{{\cal O}_X}({\cal E}),\, {\cal E})
  $$
     be the Azumaya/matrix scheme over ${\Bbb C}$
	 with the underlying topology $X$ and the fundamental module ${\cal E}$,     and
  $\breve{Y}$ be a soft noncommutative scheme via toric geometry,
   with the structure sheaf
   $$
     {\cal O}_{\breve{Y}}\;
      =\; \breve{\cal R}_\Delta\;	
      :=\; (\{\breve{R}_\sigma\}_{\sigma\in\Delta},\,
	         \{\iota_{\tau\sigma}^\sharp\}_{\tau\prec\sigma\in\Delta})\,.
  $$
  an inverse $\Delta$-system of ${\Bbb C}$-algebras,
  cf.\ Definition~3.10.
Consider the sheaf of sets
  $$
    \Homsheaf_{{\Bbb C}\dashAlgscriptsize}({\cal O}_{\breve{Y}},\, {\cal O}_X^\Azscriptsize)
	 \hspace{1em}\mbox{over $X$}
  $$
  associated to the presheaf defined by the assignment
  $U  \mapsto
     \Hom_{{\Bbb C}\dashAlgscriptsize}(\breve{R}_\Delta,\, {\cal O}_X^\Azscriptsize(U))$
   for $U$ open in $X$    and
  restriction maps
  $
    \Hom_{{\Bbb C}\dashAlgscriptsize}(\breve{R}_\Delta,\, {\cal O}_X^\Azscriptsize(U))
	 \rightarrow
	    \Hom_{{\Bbb C}\dashAlgscriptsize}(\breve{R}_\Delta,\, {\cal O}_X^\Azscriptsize(V))$
   via the post-composition with the ${\Bbb C}$-algebra homomorphism
   ${\cal O}_X^\Azscriptsize(U)\rightarrow {\cal O}_X^\Azscriptsize(V)$
  for open sets $V\subset U\subset X$.
  
\bigskip

\begin{definition} {\bf [D-brane on $\breve{Y}$: morphism $X^{\!\Az}\rightarrow \breve{Y}$]}\; {\rm
 A {\it morphism} $\varphi: X^{\!\Azscriptsize}\rightarrow \breve{Y}$ is by definition a global section, in notation
  $$
    \varphi^\sharp\;:\; {\cal O}_{\breve{Y}}\; \longrightarrow\; {\cal O}_X^\Azscriptsize
  $$
  of
  $\Homsheaf_{{\Bbb C}\dashAlgscriptsize}({\cal O}_{\breve{Y}},\, {\cal O}_X^\Azscriptsize)$.
 Explicitly, $\varphi^\sharp$ is represented by the following data
  $$
    ({\cal U}:= \{U_\alpha\}_{\alpha\in I},
	      \varphi^\sharp_{\cal U}
		    :=   \{\varphi^\sharp_\alpha: \breve{R}_\Delta \rightarrow
			              \End_{{\cal O}_X}({\cal E}|_{U_\alpha})\}_{\alpha\in I})\,,
  $$
  where
    ${\cal U}$ is an open covering of $X$    and
	$\varphi^\sharp_{\cal U}$ a collection of ${\Bbb C}$-algebra homomorphisms
   such that the following diagrams commute
  $$
    \xymatrix{
	   & \End_{{\cal O}_X}({\cal E}|_{U_\alpha}) \ar@{^{(}->}[ld]                      \\
	  \End_{{\cal O}_X}({\cal E}|_{U_\alpha\cap U_\beta})
         & &&&&&	\breve{R}_\Delta  \ar[lllllu]_-{\varphi^\sharp_\alpha}    \ar[llllld]^-{\varphi^\sharp_\beta}\\
	   & \End_{{\cal O}_X}({\cal E}|_{U_\beta})     \ar@{_{(}->}[lu]
	        &&&&& & \hspace{-3.8em},
	}
  $$
  for all $\alpha,\,\beta\in I$.
  
 With the additional data of a connection $\nabla$ on ${\cal E}$
 (the {\it Chan-Paton sheaf with a gauge field})
  this gives a mathematical model for dynamical, complex algebraic {\it D-branes}
  as an extended object moving on the soft noncommutative target-space $\breve{Y}$ over ${\Bbb C}$.
 (One may also introduce a Hermitian structure on ${\cal E}$     and
     consider only unitary connections $\nabla$ on ${\cal E}$.)
}\end{definition}

\medskip

\begin{definition} {\bf [surrogate of $X^{\!A\!z}$ associated to $\varphi$]}\; {\rm
 The completeness condition on idempotents (cf.\ Definition~4.2.9: Condition (ii))
  implies that the inclusion\\
     ${\cal A}_\varphi
	   := {\cal O}_X\langle \varphi^\sharp({\cal O}_{\breve{Y}})   \rangle
	    \hookrightarrow {\cal O}_X^{A\!z}$
	 is an ${\cal O}_X$-algebra homomorphism
	 (rather than only a quasi-homomorphism of ${\cal O}_X$-algebras).
 The corresponding noncommutative scheme $X_\varphi$ over $X$ is called
    the {\it surrogate}   of $X^{\!A\!z}$ associated to $\varphi$.
 The sequence of ${\cal O}_X$-algebra inclusions
   ${\cal O}_X \stackrel{\pi_\varphi^\sharp}{\hookrightarrow}   {\cal A}_\varphi
      \hookrightarrow {\cal O}_X^{A\!z}$
  gives a sequence of dominant morphisms
   $X^{\!A\!z}\longrightaarrow X_\varphi \stackrel{\pi_\varphi}{\longrightaarrow} X$.
 By construction, $\varphi$ factors to a composition of
   $X^{\!A\!z}\rightaarrow X_\varphi \stackrel{f_\varphi}{\longrightarrow}\breve{Y}$.
}\end{definition}

\medskip

\begin{definition} {\bf [image of $\varphi$ and push-forward Chan-Paton sheaf]}\; {\rm
 The two-sided ideal sheaf $\Ker(\varphi^\sharp)$ of ${\cal O}_{\breve{Y}}$ defines
  a soft closed subscheme of $\breve{Y}$, denoted $\Image\varphi$ and called the {\it image} of $\varphi$.
 $\varphi^\sharp$ renders ${\cal E}$ an ${\cal O}_{\breve{Y}}$-module,
   denoted $\varphi_\ast{\cal E}$ and called the {\it push-forward Chan-Paton sheaf} on $\breve{Y}$.
 By construction, $\varphi_\ast{\cal E}$ is supported on $\Image\varphi$.
}\end{definition}

\bigskip

\noindent
{\sc Figure}~4-2-1.\footnote{Revised from:\,
			 {\sl Chien-Hao Liu}  and {\sl Shing-Tung Yau},
			 {\it More on the admissible condition on differentiable\\   maps
			         $\varphi:(X^{\!A\!z}, E; \nabla)\rightarrow Y$ in the construction of the non-Abelian
					 Dirac-Born-Infeld action $S_{DBI}(\varphi, \nabla)$},\\
			arXiv:1611.09439 [hep-th] (D(13.2.1)), {\sc Figure} 2-1.
                           } 


\begin{figure}[htbp]
 \bigskip
  \centering
   \includegraphics[width=0.80\textwidth]{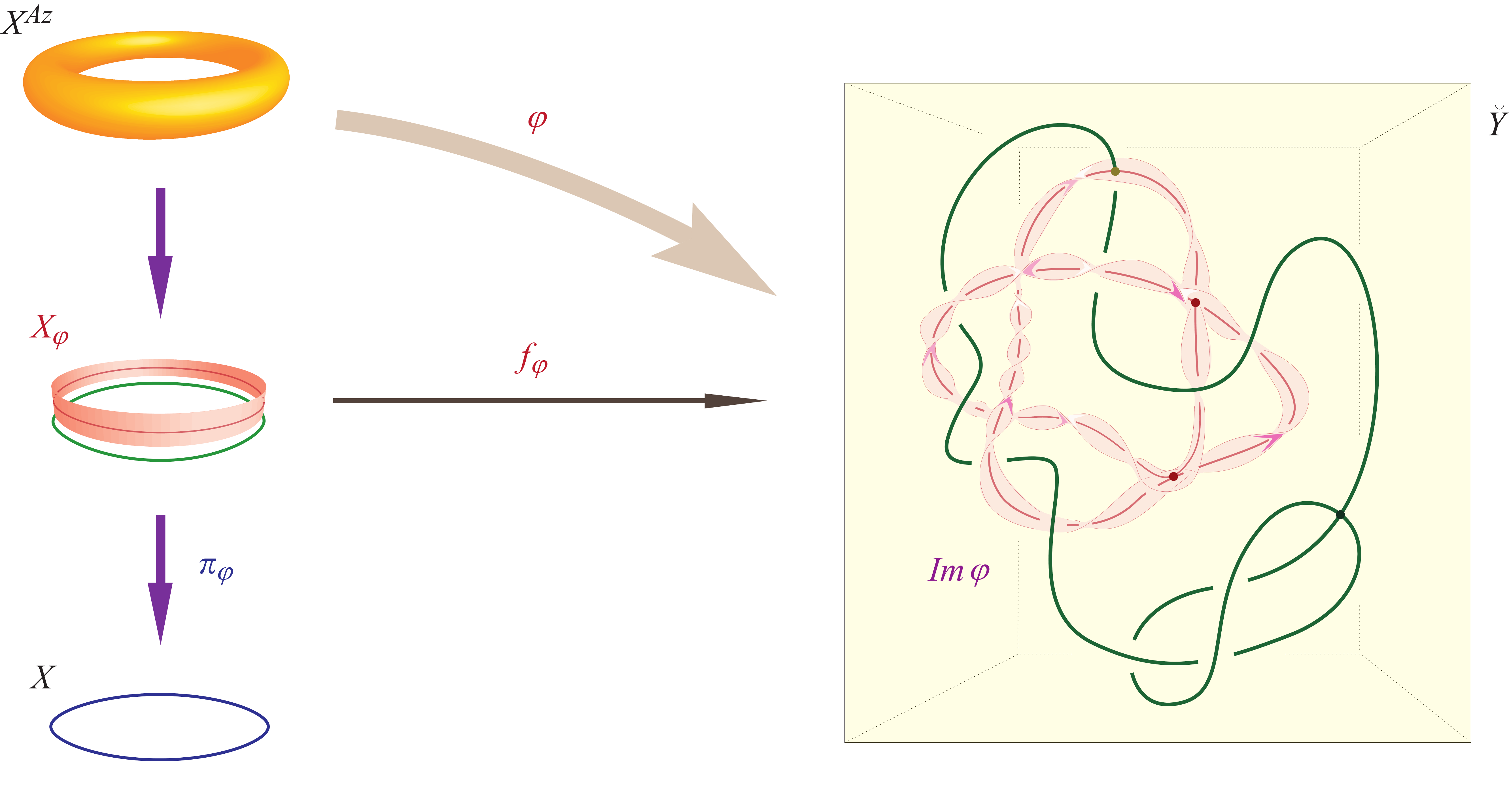}

  \bigskip
  
  \bigskip
  
 \centerline{\parbox{13cm}{\small\baselineskip 12pt
  {\sc Figure}~4-2-1.
  A morphism $\varphi: X^{\!A\!z}\rightarrow \breve{Y}$ is illustrated.
  Note that the image $\varphi(X^{\!A\!z})$ of $\varphi$ in $Y$ can have very rich variations of behavior
   as a closed subscheme of $\breve{Y}$.
  }}
\end{figure}		

\bigskip

We conclude this subsection with a second guiding question:

\bigskip

\begin{question} {\bf [generalized matrix model]}\; {\rm
 For
   $X$ a ${\Bbb C}$-point $\pt$,
   ${\cal E}={\Bbb C}^r$, and
   $\Delta=\{\sigma\}$, where $\sigma$ is a simplicial cone of $N_{\Bbb R}$ of index $1$,
  one has that $\breve{Y}(\Delta)=\nc\mathbf{A}^n$     and
   morphisms from $\pt^{A\!z}$ to $\nc\mathbf{A}^n$ are given by tuples
   $\mathbf{m}=(m_1,\,\cdots\,,\, m_n)$, $m_i\in M_r({\Bbb C})$.
 Such $\mathbf{m}$'s (with possible Hermitian constraints) are fields in a matrix model.
 From this aspect,
  a theory with fields morphisms from $\pt^{A\!z}$ to a soft noncommutative toric scheme $\breve{Y}(\Delta)$
   for a fan $\Delta$ satisfying Assumption~2.2.2
   gives a generalization of matrix models
     via gluing simple matrix models on morphisms $\pt^{A\!z}\rightarrow \nc\mathbf{A}^n$.
 Details of some examples? Consequences? Relations to Topological String Theory?
}\end{question}

\bigskip

\subsection{Two equivalent descriptions of a morphism $\varphi: X^{\!A\!z}\rightarrow \breve{Y}$}

The notion of a soft noncommutative toric scheme $\breve{Y}(\Delta)$ over ${\Bbb C}$
    associated to a fan $\Delta$    can be generalized to
  the notion of a {\it soft noncommutative toric scheme $\breve{Y}_X(\Delta)$ over $X$}
   associated to $\Delta$
  by generalizing
    the notion of an inverse $\Delta$-system of ${\Bbb C}$ monoid algebras
	   $\{{\Bbb C}\langle \breve{M}_\sigma\rangle\}_{\sigma\in\Delta}$
    to the notion of an {\it inverse $\Delta$-system of ${\cal O}_X$ monoid algebras}
	    $\{{\cal O}_X \langle \breve{M}_\sigma\rangle\}_{\sigma\in\Delta}$.
 Similarly, for close subschemes $\breve{Z}$ of $\breve{Y}$.
This defines the {\it product space} $X\times \breve{Y}$
   of the commutative scheme $X$ and the soft noncommutative scheme $\breve{Y}$
   and its structure sheaf ${\cal O}_{X\times\breve{Y}}$.
The built-in inclusion
  $$
     {\cal O}_X\;
	   \hookrightarrow\;  \Center({\cal O}_{X\times \breve{Y}})\;
	   \subset\;  {\cal O}_{X\times\breve{Y}}
  $$
   of ${\cal O}_X$-algebras defines the projection map
  $\pr_X: X\times\breve{Y}\rightarrow X$.
In particular, an ${\cal O}_{X\times\breve{Y}}$-module $\widetilde{\cal F}$ on $X\times\breve{Y}$
  is automatically an ${\cal O}_X$-module,
  denoted $\pr_{X,\,\ast}\,\widetilde{\cal F}$ or simply $\widetilde{\cal F}$.
The built-in inclusion
 $$
   {\cal O}_{\breve{Y}}\; \hookrightarrow\; {\cal O}_{\breve{Y}}\cdot 1\;
    \subset {\cal O}_{X\times\breve{Y}}
 $$
 of ${\Bbb C}$-algebras defines the projection map
 $\pr_{\breve{Y}}: X\times \breve{Y}\rightarrow \breve{Y}$.
 
Let $\widetilde{\cal F}$ be a two-sided ${\cal O}_{X\times \breve{Y}}$-module.
 The {\it support} of $\widetilde{\cal F}$, in notation $\Supp(\widetilde{\cal F})$, is by definition
  the closed subscheme of $X\times\breve{Y}$
   associated to the two-sided ideal sheaf of ${\cal O}_{X\times\breve{Y}}$
   generated by both left annihilators and right annihilators of $\widetilde{\cal F}$
  $$
    \Ann(\widetilde{\cal F})\:
	:=\; (f\in {\cal O}_{X\times\breve{Y}}\,|\,
	                \mbox{$f\cdot \widetilde{\cal F}=0$ or $\widetilde{\cal F}\cdot f=0$})\,.
  $$
 We say that $\widetilde{\cal F}$ is {\it of relative dimension $0$ over $X$}
  if the restriction of $\widetilde{\cal F}$ to each $\{p\}\times\breve{Y}$ for $p$ a ${\Bbb C}$-point on $X$
        is a finitely dimensional ${\Bbb C}$-vector space  and
   ${\cal O}_{\scriptsizeSupp(\widetilde{\cal F})}
	  := {\cal O}_{X\times\breve{Y}}/\Ann(\widetilde{\cal F})$
        is a coherant ${\cal O}_X$-module under $\pr_{X,\,\ast}$.

\bigskip

\begin{definition-lemma} {\bf [graph of morphism $\varphi: X^{\!A\!z}\rightarrow \breve{Y}$]}\; {\rm
 Let $\varphi: X^{\!A\!z}\rightarrow \breve{Y}$ be a morphism, defined via
  $\varphi^\sharp:{\cal O}_{\breve{Y}}
    \rightarrow {\cal O}_X^{A\!z}:=\Endsheaf_{{\cal O}_X}({\cal E})$.
 Then,
   $\varphi^\sharp$ determines a homomorphism of ${\cal O}_X$-algebras
   $$
     \widetilde{\varphi}^\sharp\;:\; {\cal O}_{X\times\breve{Y}}\;
	   \longrightarrow\; {\cal O}_X^{A\!z}\,,\hspace{1em}
	 \mbox{with
	   $f\cdot \breve{r}\longmapsto f\cdot \varphi^\sharp(\breve{r})$
	    for all $f\in {\cal O}_X$ and $\breve{r}\in {\cal O}_{\breve{Y}}$}\,.
   $$
  This defines the morphism $\widetilde{\varphi}:X^{\!A\!z}\rightarrow X\times\breve{Y}$.
  The two-sided ideal sheaf $\Ker(\widetilde{\varphi}^\sharp)$ of ${\cal O}_{X\times\breve{Y}}$
   defines a closed subscheme $\Gamma_\varphi$ of $X\times\breve{Y}$ that is isomorphic to the surrogate
   $X_\varphi$ of $X^{\!A\!z}$ associated to $\varphi$, as noncommutative schemes over $X$.
  By construction,
   $\widetilde{\cal E}:= \widetilde{\varphi}_\ast{\cal E}$
     is an ${\cal O}_{X\times\breve{Y}}$-module supported on $\Gamma_\varphi$.
  The ${\cal O}_{X\times\breve{Y}}$-module $\widetilde{\cal E}$  is called the {\it graph} of $\varphi$.
  It has the property that
   \begin{itemize}
    \item[$\cdot$]
	 $\widetilde{\cal E}$ is of relative dimension $0$ over $X$  and
	 $\pr_{X,\,\ast}\widetilde{\cal E}={\cal E}$\;
	 (i.e.\  $\pr_{X,\,\ast}\widetilde{\cal E}\simeq {\cal E}$ canonically).
   \end{itemize}
 
  The converse also holds: {\it
  Let
   $\widetilde{\cal E}$
     be an ${\cal O}_{X\times\breve{Y}}$-module of relative dimension $0$ over $X$
 	 such that ${\cal E}:= \pr_{X,\,\ast}\widetilde{\cal E}$ is a coherent locally free
	 ${\cal O}_X$-module, say of rank $r$      and\\
   $X^{\!A\!z}:= (X, {\cal O}_X^{A\!z}:= \Endsheaf_{{\cal O}_X}({\cal E}), {\cal E})$
     be the Azumaya scheme$/{\Bbb C}$ with the fundamental module ${\cal E}$.
  Then, $\widetilde{\cal E}$ specifies a morphism $\varphi: X^{\!A\!z}\rightarrow \breve{Y}$.
   } 
 
   Cf.~[L-L-S-Y: {\sc Figure} 2-2-1] (D(2)).
}\end{definition-lemma}

\medskip

\begin{proof}
 Via $\pr_{\breve{Y}}^\sharp$,
 the ${\cal O}_{\breve{Y}}$-module structure on $\widetilde{\cal E}$
   induces a ${\cal O}_{\breve{Y}}$-module structure on ${\cal E}$.
 This defines a homomorphism
   $\varphi^\sharp: {\cal O}_{\breve{Y}}\rightarrow {\cal O}_X^{A\!z}$.

\end{proof}

\bigskip

From Definition/Lemma~4.3.1, one realizes that
   
\bigskip

\begin{lemma}
{\bf [$\varphi$ and moduli stack of $0$-dimensional ${\cal O}_{\breve{Y}}$-modules]}\;
  Let
    $\frak{M}^{0;r}(\breve{Y})$
	be the moduli stack of $0$-dimensional ${\cal O}_{\breve{Y}}$-modules
	 that are of complex dimension $r$ as ${\Bbb C}$-vector spaces.
 Then
   a morphism from an Azumaya scheme over $X$ with the fundamental module of rank $r$ to $\breve{Y}$
    gives a morphism $X\rightarrow \frak{M}^{0;r}(\breve{Y})$.\;
 {\rm   Cf.~[L-L-S-Y: {\sc Figure} 3-1-1] (D(2)).}
\end{lemma}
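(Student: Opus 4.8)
The plan is to produce the classifying morphism $X \to \frak{M}^{0;r}(\breve{Y})$ directly from the graph construction in Definition/Lemma~4.3.1, rather than attempting to build it by hand chart by chart. First I would recall the setup: given a morphism $\varphi : X^{\!\Azscriptsize} \to \breve{Y}$, defined by $\varphi^\sharp : {\cal O}_{\breve{Y}} \to {\cal O}_X^{\Azscriptsize} = \Endsheaf_{{\cal O}_X}({\cal E})$, Definition/Lemma~4.3.1 already produces the graph $\widetilde{\cal E} := \widetilde{\varphi}_\ast{\cal E}$, an ${\cal O}_{X\times\breve{Y}}$-module which is of relative dimension $0$ over $X$ and satisfies $\pr_{X,\,\ast}\widetilde{\cal E} \simeq {\cal E}$ canonically, hence is locally free of rank $r$ on $X$. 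The key point to extract is that for each ${\Bbb C}$-point $p$ of $X$, the restriction $\widetilde{\cal E}|_{\{p\}\times\breve{Y}}$ is an ${\cal O}_{\breve{Y}}$-module that is $r$-dimensional over ${\Bbb C}$ — precisely an object parametrized by $\frak{M}^{0;r}(\breve{Y})$. More generally, for any ${\Bbb C}$-scheme $T$ and any morphism $g : T \to X$, the pullback $g^\ast\widetilde{\cal E}$ (formed via $g \times \id_{\breve{Y}} : T\times\breve{Y} \to X\times\breve{Y}$) is an ${\cal O}_{T\times\breve{Y}}$-module flat over $T$, of relative dimension $0$, with fibers of constant length $r$; this is exactly a $T$-valued point of the moduli stack $\frak{M}^{0;r}(\breve{Y})$.

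Second, I would verify that this assignment $T \rightsquigarrow g^\ast\widetilde{\cal E}$ is functorial in $T$: it respects composition of morphisms $T' \to T \to X$ up to the canonical base-change isomorphisms for pushforward/pullback of sheaves, and sends isomorphisms to isomorphisms. By the universal property (Yoneda) of the moduli stack $\frak{M}^{0;r}(\breve{Y})$ — which by definition represents (as a stack) the functor of flat families of $0$-dimensional ${\cal O}_{\breve{Y}}$-modules of ${\Bbb C}$-dimension $r$ — this functorial family $\widetilde{\cal E}$ over $X\times\breve{Y}$ \emph{is} the data of a $1$-morphism $X \to \frak{M}^{0;r}(\breve{Y})$. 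The locally-free-of-rank-$r$ hypothesis $\pr_{X,\ast}\widetilde{\cal E} = {\cal E}$ guarantees $X$-flatness and the constancy of the fiber dimension, which is what makes the family land in the connected component indexed by $r$. At this stage I would also remark on compatibility with surrogates: by Definition~4.2.11 and Definition/Lemma~4.3.1, $\Gamma_\varphi \cong X_\varphi$ as noncommutative schemes over $X$, so the support of $\widetilde{\cal E}$ inside $X\times\breve{Y}$ is literally the surrogate, and the pushforward Chan-Paton sheaf $\varphi_\ast{\cal E}$ of Definition~4.2.13 is recovered as $\pr_{\breve{Y},\ast}\widetilde{\cal E}$ — this makes the geometric content of the classifying map transparent and matches [L-L-S-Y: {\sc Figure} 3-1-1] (D(2)).

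The main obstacle I anticipate is not the construction but the bookkeeping around flatness and base change in the \emph{noncommutative/soft} setting. Because $\breve{Y}$ is only a gluing system $\{\breve{U}_\sigma\}_{\sigma\in\Delta}$ of inverse $\Delta$-systems rather than an honest scheme, one must check that ``${\cal O}_{X\times\breve{Y}}$-module of relative dimension $0$ over $X$, flat over $X$'' is a well-behaved notion under the pullbacks $g\times\id_{\breve{Y}}$ — concretely, that on each fundamental chart $\breve{U}_\sigma \simeq \nc\mathbf{A}^n$ the relevant module over ${\cal O}_X\langle\breve{M}_\sigma\rangle$ behaves as expected, and that these local pictures glue across the inverse $\Delta$-system. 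Here the fact that $\widetilde{\cal E}$ has relative dimension $0$ over $X$ — so that everything is, fiberwise over $X$, a finite-dimensional module over a (noncommutative) ${\Bbb C}$-algebra — does most of the work, reducing flatness over $X$ to local freeness of $\pr_{X,\ast}\widetilde{\cal E}$, which is granted. A secondary subtlety is making precise what $\frak{M}^{0;r}(\breve{Y})$ is as a stack in this context; I would take the definition to be exactly the functor-of-points described above, so that the statement becomes essentially a tautology once functoriality of the graph is checked, and I would flag (as the excerpt itself does, via ``Cf.'') that the detailed moduli-theoretic foundations follow the pattern of [L-L-S-Y] (D(2)).
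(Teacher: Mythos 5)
Your proposal is correct and follows the same route the paper itself takes: the paper states Lemma~4.3.2 immediately after the sentence ``From Definition/Lemma~4.3.1, one realizes that\ldots'', i.e.\ it treats the classifying morphism $X\rightarrow\frak{M}^{0;r}(\breve{Y})$ as an immediate consequence of the graph $\widetilde{\cal E}$ being an $X$-flat, fiberwise $r$-dimensional family of $0$-dimensional ${\cal O}_{\breve{Y}}$-modules, exactly as you argue. Your functor-of-points bookkeeping, the remark identifying the support of $\widetilde{\cal E}$ with the surrogate $X_\varphi$, and the caveats about flatness and base change over the soft noncommutative target merely make explicit what the paper leaves to the reader (and defers to [L-L-S-Y] for foundational detail).
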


\newpage
\baselineskip 13pt
{\footnotesize

\vspace{1em}

\noindent
chienhao.liu@gmail.com, 
chienliu@cmsa.fas.harvard.edu; \\
yau@math.harvard.edu

}

\end{document}